\numberwithin{equation}{section}
\date{\today}
    \newtheorem{theorem}{Theorem}
    \newtheorem{lemma}{Lemma}
    \newtheorem{proposition}{Proposition}
    \newtheorem{corollary}{Corollary}
\theoremstyle{definition} % For roman text in the body
    \newtheorem{remark}{Remark}
    \newtheorem{example}[theorem]{Example}
    \newtheorem{exercise}[theorem]{Exercise}
    \newtheorem{assumption}[]{Assumption}
\def\newblock{\hskip .11em plus .33em minus .07em}
\def\BB{{\mathcal B}}
\def\AA{{\mathcal A}}
\def\d{\delta}
\def\e{\epsilon}
\def\vp{\varphi}
\def\Z{\mathbb{Z}}
\def\R{\mathbb{R}}
\def\Z{\mathbb{Z}}
\def\sgn{{\mb{sgn}}}
\def\l{\left}
\def\r{\right}
\def\<{\langle}
\def\>{\rangle}
\def\mb{\mbox}
\newcommand{\one}{{\mathbf 1}}
\newcommand{\E}{\mbox{\bf E}}
\def\bar{\overline}
\newcommand\Tr{{\mbox{Tr}}}
\newcommand\mnote[1]{} %off
\newcommand\be{\begin{equation*}}
\newcommand\ee{\end{equation*}}
\newcommand\ben{\begin{equation}}
\newcommand\een{\end{equation}}
\newcommand\bes{\begin{eqnarray*}}
\newcommand\ees{\end{eqnarray*}}
\newcommand\bex{\begin{exercise}}
\newcommand\eex{\end{exercise}}
\newcommand\beg{\begin{example}}
\newcommand\eeg{\end{example}}
\newcommand\benu{\begin{enumerate}}
\newcommand\eenu{\end{enumerate}}
\newcommand\beit{\begin{itemize}}
\newcommand\eeit{\end{itemize}}
\newcommand\berk{\begin{remark}}
\newcommand\eerk{\end{remark}}
\newcommand\bdefn{\begin{defintion}}
\newcommand\edefn{\end{definition}}
\newcommand\bthm{\begin{theorem}}
\newcommand\ethm{\end{theorem}}
\newcommand\bprf{\begin{proof}}
\newcommand\eprf{\end{proof}}
\newcommand\blem{\begin{lemma}}
\newcommand\elem{\end{lemma}}
\newcommand{\sm}{{\raise0.3ex\hbox{$\scriptstyle \setminus$}}}
\def\mb{\mbox}
\def\l{\left}
\def\r{\right}
\def\CHI{\mathchoice%
{\raise2pt\hbox{$\chi$}}%
{\raise2pt\hbox{$\chi$}}%
{\raise1.3pt\hbox{$\scriptstyle\chi$}}%
{\raise0.8pt\hbox{$\scriptscriptstyle\chi$}}}
\def\smalloplus{\raise1pt\hbox{$\,\scriptstyle \oplus\;$}}
\numberwithin{equation}{section}
\title{Convergence of high dimensional Toeplitz and related matrices with correlated inputs}
\author[]{Kartick Adhikari\thanks{kartick@iiserb.ac.in\ :  Research supported by Inspire Faculty Fellowship,  DST/INSPIRE/04/2020/000579.}}
\author[]{Arup Bose\thanks{bosearu@gmail.com\ : Research supported by J.C. Bose Fellowship JBR/2023/000023 from Anusandhan National Research Foundation (ANRF) Government of India.}}
\author[]{Shambhu Nath Maurya\thanks{shambhumath4@gmail.com\ : Research partially supported by NBHM Post-doctoral Fellowship, 0204/10/(25)/2023/R$\&$D-II/2803.  Work partially done at IISER Bhopal,  funded by DST/INSPIRE/04/2020/000579.}}
\affil[]{$^{\ast}$  
Dept.~of Math., Indian Institute of Science Education and Research, Bhopal, 	India}
\affil[]{$^{\dagger}$ $^{\ddagger}$  Statistics and Mathematics Unit, Indian Statistical Institute, Kolkata, India}
\date{\today}
\begin{document}

\maketitle
\vskip-10pt
\begin{abstract} 
  We investigate the joint convergence of independent random Toeplitz matrices with complex input entries that have a pair-correlation structure, along with deterministic Toeplitz matrices and the backward identity permutation matrix. 
Further, we study  the joint convergence of independent  generalized Toeplitz matrices along with other related matrices.
The limits depend only on the correlation structure but are universal otherwise, in that they do not depend on the underlying distributions of the entries. In particular, these results provide the joint convergence of asymmetric Hankel matrices. Earlier results in the literature on the joint convergence of random symmetric Toeplitz and symmetric Hankel matrices with real entries follow as special cases.
\end{abstract}

\textbf{AMS Subject Classification}:   Primary 15B05;  Secondary 15B52, 60B20. 

\vskip5pt

\textbf{Keywords}:  Correlated Toeplitz and Hankel matrices,  permutation matrix, tracial convergence, empirical and limiting spectral distribution.

\vskip3pt

\section{Introduction} 

\subsection{Non-random Toeplitz and Hankel matrices} \label{subsec:toe+Han_def}
This article is on sequences of high dimensional Toeplitz and Hankel matrices. 
%matrices that are important in mathematics are.  
%and Circulant. 
Let $\Z$ be the set of all integers.
The $n\times n$ Toeplitz matrix $T_n$  with \textit{input sequence} $\{a_{i,n};  i\in \Z\}$,  is defined as
\begin{align*}
T_n=\left(\begin{array}{ccccc}
a_{0,n} & a_{-1,n} & a_{-2,n} & \cdots & a_{1-n,n}\\
a_{1,n} & a_{0,n} & a_{-1,n} & \cdots & a_{2-n,n}\\
a_{2,n} & a_{1,n} & a_{0,n} & \cdots & a_{3-n,n}
\\ \vdots &\vdots &\vdots & \cdots & \vdots\\
a_{n-1,n}& a_{n-2,n}& a_{n-3,n}& \cdots & a_{0,n}
\end{array}  \right).
\end{align*}
We shall write $a_i$ for $a_{i,n}$. 
In short we can write $T_n=((a_{i-j}))_{i,j=1}^n$. Note that $T_n$  is not symmetric. We shall also work with its symmetric version, namely, 
$T_{n,s}=((a_{|i-j|}))_{i,j=1}^n$.

Likewise, the general $n\times n$  Hankel matrix $H_n$  with the input sequence $\{a_{i,n}; i\in \Z\}$ is
\begin{align} \label{eqn:Hankel}
H_n=\left(\begin{array}{cccccc}
a_{2,n} & a_{-3,n} & a_{-4,n} & \cdots & a_{-n,n} & a_{-(n+1),n}\\
a_{3,n} & a_{4,n} & a_{-5,n } & \cdots & a_{-(n+1),n} & a_{-(n+2),n}\\
a_{4,n} & a_{5,n} & a_{6,n} &\cdots & a_{-(n+2),n} & a_{-(n+3),n}\\
\vdots & \vdots & \vdots & \cdots &\vdots&   \vdots\\
%a_{n-1,n} & a_{n,n} & a_{n+1,n} & \cdots & a_{-(2n-3),n} &a_{-(2n-2),n}\\
a_{n,n} & a_{n+1,n} & a_{n+2,n} & \cdots&a_{2n-2,n} & a_{-(2n-1),n}\\
a_{n+1,n} & a_{n+2,n} & a_{n+3,n} & \cdots & a_{2n-1,n} & a_{2n,n}
\end{array}  \right).
\end{align}
%We shall write $a_i$ for $a_{i,n}$. 
In short we can write $H_n=((a_{(i+j)\sgn(i-j)}))_{i,j=1}^n$, where
\begin{equation*} %\label{eqn:sgn_map}
	\sgn(\ell)= \l\{\begin{array}{rll}
		1 & \mbox{ if } & \ell \geq 0,\\
		-1& \mbox{ if } & \ell<0.
	\end{array} \r.
\end{equation*}
%That is, $(i,j)$-th entry of $H_n$ is $a_{i+j}$ if $i\geq j$ and $a_{-(i+j)})$ if $i< j$.
 Note that $H_n$  is not symmetric. 
% unless $a_{-i}=a_{i}$ for all $i$. 
Its symmetric version is $H_{n,s}=((a_{i+j}))_{i,j=1}^n$.

The Hankel and the Toeplitz matrices are related to each other through certain deterministic matrices as given below. Let $P_n$ be the $n\times n$ {\it backward identity} permutation matrix defined as
%We now make a crucial observation on the relation between $H_{n,s}$ and $T_n$. Let $P_n$ be the $n\times n$ {\it backward identity} permutation matrix defined as 
%be the of dimension $n$, that is, $P_n=((\d_{i-1,n-j}))_{n\times n}$. It looks like,
	$$
P_n=\left[\begin{array}{cccccc}
	0 & 0 & 0 & \ldots & 0 & 1 \\
	0 & 0 & 0 & \ldots & 1 & 0 \\
	\vdots & \vdots & \vdots & \vdots & \vdots & \vdots  \\
	0 & 1 & 0 & \ldots & 0 & 0 \\
	1 & 0 & 0 & \ldots & 0 & 0
\end{array}\right].
$$
%Its $(i,j)$-th entries is 
Then it is easy to check that $P_nT_n$ is a symmetric Hankel matrix. Conversely, for any $H_{n,s}$, $P_n H_{n,s}$ is a  Toeplitz matrix. In this article, our symmetric Hankel matrices are always considered to be of the form $P_n T_{n}$. Similarly, asymmetric Hankel matrices are also related to some form of Toeplitz matrices and $P_n$. The details of this relation are given in Section \ref{subsec:gen_toe}.

 We shall consider only the high dimensional case, namely where $n\to \infty$. 
 Let us first consider these matrices with non-random entries.
 %These matrices have a lot of importance in mathematics. 
Such Toeplitz matrices appear in many areas, such as numerical analysis, signal processing, time series analysis, etc. 
 They are very well-known in operator theory. 
 For early works on the spectral properties of these matrices, especially when $n\to \infty$, see \cite{grenander_szego_book_84}, \cite{bottcher+Silbermann_book_90},
and \cite{bottcher_analysisofToeop_book_06}.
  The recent survey article \cite{basor_harold_toe+oper_rev_22} presents the background history and development of the Toeplitz matrix. For further information on high dimensional Toeplitz matrices, see \cite{bottcher+silbermann_Trunc+Toe_book_99}, \cite{bottcher_analysisofToeop_book_06}, \cite{nikolski_Toe+mart+ope_book_20}, 
 \cite{basor_harold_toe+oper_rev_22} and the references therein.
%For more applications and recent developments, see the recent survey article \cite{basor_harold_toe+oper_rev_22}. 
Non-random Hankel matrix also  appears crucially in operator theory and related areas. It is used to check the solvability of the \textit{Hamburger moment problem} (see \cite{shohat_moment+problem_43}). In signal processing, for a discrete signal, $X=(x_1, x_2, \ldots, x_n)$, a Hankel matrix $H_n=((x_{i+j}))$ is  created by this signal.
%l by taking its their $(i,j)$ entry as $x_{i+j}$.  
The singular value decomposition (SVD) of $H_n$ is used in the analysis of the signal,  
%(SVD) of $H_n$. 
%For an application in signal processing, see 
see  %\cite{alkiviadis_applic+SVD_04}, 
\cite{xuezhi_signal_procss_hankel_09} and \cite{jain_signal+analysis_15}, for further use of SVD of the Hankel matrix in 
signal processing. 
 
%The determinants of Toeplitz and Hankel matrices play important roles in analysis and mathematical physics. Two recent articles on determinants are 
%For details on the theory and applications of these determinants, see 
% \cite{deift+etal_Toe+det+hist_CPAm_13} and \cite{basor+etal_bordered+Toe+det_JSP_22}. 
 
 In information theory, the memory-less channel of the form $z= Ax + \epsilon$, is used, where $x$ is an 
%the p-dimensional 
input signal, $z$ is the 
%the n-dimensional 
output signal, $\epsilon$ is an additive noise and $A$ is a suitable $n \times p$ random matrix. 
The matrix $AA^{\prime}$ plays a crucial role in the analysis of such systems. 
 For more details, see \cite{Tulino_2004_RMT_Wireless_Commu}, \cite{couillet+debbah_RMT_Wireless_book_11}, \cite{geLiang} and references therein.

 Before we discuss random Toeplitz and Hankel matrices, 
 %which is the primary focus of this article, 
 we need the notions of convergence of the spectral distribution and
 the joint algebraic convergence of several matrices when taken together.

\subsection{\texorpdfstring{$*$}{} probability space and algebraic convergence}
%When we have one or more sequences of $n\times n$ (random) matrices, the appropriate notion of convergence is via viewing them as elements of the {\it $*$-probability spaces} of matrix algebras with the state as the average trace.  
%not the conventional notion used for real or complex valued random variables, but the following. 
% from  $\mathcal A_n$ {\it converges jointly in $*$-distribution} to 
%\noindent (b) 
%We first recall some basic definitions from the literature of free probability and Brown measure. 
%A {\it non-commutative probability (NCP) space} is a pair $(\mathcal A,\vp)$ where 
A $*$-probability space is a pair $(\mathcal A,\vp)$ where $\mathcal A$ is a unital $*$-algebra (with unity $\one_{\mathcal A}$) over complex numbers,  and  $\vp$ is a linear functional such that $\vp(\one_{\mathcal A})=1$ and $\vp(aa^*)\ge 0$ for all $a\in \mathcal A$. The state is called tracial if $\vp (ab)=\vp(ba)$ for all $a, b \in \mathcal{A}$.

%When we have one or more $n\times n$ (random) matrices, 
%the appropriate notion of convergence is via viewing them as elements of the {\it $*$-probability spaces} of matrix algebras with the state as the average trace. 
%When dealing with matrices, the natural choice of the $*$-probability space is
Let $\mathcal{A}_n$ be the matrix algebra of $n\times n$ random matrices (the operation $*$ is one of taking complex conjugate), whose input entries have all moments finite, 
%is a $*$-probability space 
with the state 
\begin{align*}
\vp_n(B_n)&:=\frac{1}{n}\E[\Tr(B_n)], \;\mbox{where $\Tr(B_n)=\sum_{i=1}^{n}b_{ii}$ when $B_n=((b_{ij}))_{n\times n}\in \mathcal A_n$}.
\end{align*}
%%where $\Tr$ stands for trace.  ;
Suppose $\{A_{i,n}; 1\leq i \leq p\}$ are  $p$ sequences of $n\times n$ random matrices. 
 We say that 
they {\it converge jointly in $*$-distribution} to some elements $\{a_i; 1\leq i \leq p\}\in (\mathcal A,\vp)$  
if for every choice of $k \geq 1$, $\epsilon_1,\epsilon_2,\ldots, \epsilon_k\in \{1, *\}$ and $i_1, \ldots, i_k \in \{1, \ldots , p\}$, we have 
\begin{equation}\label{eq:star_conv}
\lim_{n\to \infty}\varphi_n(A_{i_{1},n}^{\epsilon_1}\cdots A_{i_{k},n}^{\epsilon_k})=\varphi(a_{i_{1}}^{\epsilon_1}\cdots a_{i_{k}}^{\epsilon_k}).
 \end{equation}
Then we write $(A_{i,n}; 1\leq i\leq p)\stackrel{*\mbox{-dist}}{\longrightarrow} (a_i; 1\leq i \leq p)$. 
 
If the matrices are \textit{real symmetric}, then taking $*$ is redundant. The quantities  $\{\varphi_n(\cdot)\}$ and $\{\varphi(\cdot)\}$ are referred to as $*$-moments of the respective variables. A moment is said to be odd if $k$ is odd.
It is important to note that if the limit on the left side of \eqref{eq:star_conv} exists for all choices, then these limits automatically define a $*$-probability space with $p$ indeterminates. 

\subsection{Spectral distribution and its convergence}
Let $A_n$ be an $n\times n$ random matrix with eigenvalues $\lambda_{1,n},\ldots, \lambda_{n,n}$.
Then the {\it empirical spectral distribution} (ESD) of $A_n$ is defined as 
\begin{align*}
F^{A_n}(x,y)=n^{-1}\#\{k\;:\; \Re(\lambda_{k,n})\le x, \Im(\lambda_{k,n})\le y\},\;\;\;\mbox{ for $x,y\in \R$},
\end{align*} 
where $\# A$ denotes the cardinality of $A$, and $\Im(x)$ and $\Re (x)$ denote the imaginary and real parts of $x$ respectively.
While $F^{A_n}$ is a random distribution function, its expectation  $\E[F^{A_n}(x,y)]$ (called the EESD) is a non-random  distribution function. 

If as $n \to \infty$, the ESD and/or the EESD converges weakly (a.s.~or in probability for the ESD)
to a distribution function $F_{\infty}$, then the limit(s) are commonly referred to as the  {\it limiting spectral distribution} (LSD) of $A_n$. 
% (a.s.  or in probability). Its limit is also called the LSD.

The two notions of convergence are related as follows: suppose $\{A_n\}$ are hermitian and converge to $a$ in the above sense where the moments $\{\varphi(a^k)\}_{k \geq 1}$ determine a unique probability distribution $\mu$ say. Then the EESD of $A_n$ converges weakly to $\mu$.

\subsection{Random Toeplitz and Hankel matrices}
%The random Toeplitz and Hankel matrices were introduced by \red{Bai???(refer to Statistica  Sinica review paper)}. 
Random Toeplitz and Hankel matrices were introduced in the review paper \cite{bai_zd_99}.
 % These areticles have no context. So I have dropped them. For various applications and results on random Toeplitz and Hankel matrices, see  \cite{girko_Theory_RDet_book_90} \blue{and \cite{basor_Toe+dete_Fisher_book_05}}.
 %\blue{In a chapter of the first ref., the application of Hankel and Toeplitz is given. Whereas, in the second ref., applications of Toeplitz with circulant is given. Please let me know, does such types of references are not allowed to cite?.  I am not finding any book which is fully dedicated to both Toeplitz and Hankel.}
 First consider the symmetric versions, $T_{n,s}$ and $H_{n,s}$ where   
% ($a_{i}=a_{-i}$). 
$\{a_j\}$ are \textit{real} i.i.d.~with finite variance. 
%%\textbf{Observe that $H_{n,s}=D_nT_{n,s}$ This CANNOT be true because $H_{n,s}$ has approx 2n variables while $T_{n,s}$ has only $n$}. 
%\blue{To define symmetric Hankel we do not need $T_{n,s}$. Even $H_{n,s}=D_nT_{n}$ is true, see the definition corresponding to (1.10) and (1.11) from \cite{liu-wang}. \textbf{Ok, this seems to be more plausible}.Please confirm}, where $D_n=(\d_{i-1,n-j})_{n\times n}$ is the {\it backward identity } matrix  of dimension $n$.
The a.s.~convergence of the ESD of $n^{-1/2} T_{n,s}$  and $n^{-1/2} H_{n,s}$ have been proved by Hammond and Miller \cite{hammond_miller_05}, and  Bryc et al. \cite{bryc_lsd_06}. These limits are universal (do not depend on the underlying distribution of $a_j$).
Sen and Virag \cite{sen+virag_abscont+toe_11} proved that in the Toeplitz case, the limit distribution is absolutely continuous with respect to the Lebesgue measure with a bounded density. 

These results have been extended in two ways. For LSD results on $T_{n,s}$ and $H_{n,s}$, when $\{a_j\}$ are independent but not necessarily identically distributed, see  
%Bose, Saha and Sen (2021) 
\cite{bose+saha+sen_pattern_RMTA_21} and the references therein. 
For the LSD results on these matrices when $\{a_j\}$ is an infinite order moving average, see \cite{bose_sen_LSD_EJP}, and when $\{a_j\}$ is the sum of finitely many independent copies, see \cite{maurya_LED+Toep_dep_24}.  
%In the non-symmetric case, 

The problem of convergence of the LSD in the asymmetric case is hard, and is yet unsolved. To gauge this difficulty, consider the  so-called IID matrix $C_n$ 
%where all the entries are i.i.d., which we call as the IID matrix 
(this may also be viewed as the non-symmetric version of the Wigner matrix). After a long series of partial results, \cite{mehta_RM+ST_book_67}, \cite{bai_circular+law_annals_97}, \cite{girko_circular+20Y+iii_05} and \cite{gotze_circular+law_annals_10}, it was eventually established in   \cite{tao_circular+IID_10} and \cite{tao_littlewood+circular_09}  that the LSD of $n^{-1/2}C_n$ is the uniform law on the unit disc when the entries are i.i.d.~with unit variance. A survey article \cite{bordenave_around+circular+law_12} is dedicated to this problem. 

There are severe technical difficulties in extending these proofs to the asymmetric Toeplitz and Hankel matrices. First, the number of independent entries in $C_n$ is $O(n^2)$ while  for $T_n$ and $H_n$ there are only $O(n)$ independent entries. Further, while simulations convince us that the LSDs exist, unlike $C_n$ this LSD is unbounded. 

%\cite{sen_cirLSD} and \cite{meckes_cirLSD} 

Incidentally, one of the first steps in the proof of the above uniform law result is to show that the empirical distribution of the singular values of $C_n$, that is, ESD of $n^{-1}C_nC_n^\prime$ exists. The parallel result for the Toeplitz and the Hankel matrices is known from the work of 
%However, LSD results are known for their singular values.  
\cite{bose+gango+sen_XX'_10}, 
%Bose et al.  showed that 
%in this case, 
%when they are non-symmetric,   
%(for both $T_n$ and $H_n$), 
%have shown that the LSD of $n^{-1}T_nT_n^*$ and $n^{-1}H_nH_n^*$ exist 
when $\{a_j\}$ is \textit{real} i.i.d.~ with some finite moments. 
\cite{bose+priyanka_XX'_22} proved general results where $\{a_j\}$ are real and independent, but not necessarily identically distributed. 

\subsection{Joint convergence of random matrices}
 The work of  Voiculescu \cite{Dan_wigner_JC} was revolutionary. He showed that independent Wigner matrices converge jointly and remarkably, the limit variables are \textit{freely independent}. Since then there have been many works on the joint convergence of other matrices. For an overview, see \cite{rayan_JC_limitnotfree}, 
\cite{basu_bose_JC_patt}, \cite{mingo_freeprob_book} and \cite{bose_freeprob}.

\cite{bose_saha_patter_JC_annals}
presented a unified approach for the joint convergence of the five symmetric random matrices\textemdash Wigner, symmetric Toeplitz, symmetric Hankel, reverse circulant and symmetric circulant. The overriding assumption in these works is that the input variables that are used to populate these matrices are independent and real valued. 

The Wigner matrix with a pair-correlated structure assumes that the pairs $(w_{i,j}, w_{j, i})$ are i.i.d. with a correlation $\rho$ say. It is also known as the \textit{elliptic matrix}. This matrix has an LSD which is uniformly distributed on the interior of an ellipse, centered at the origin with major and minor axes that depend on $\rho$. The special case of $\rho=0$ yields the uniform law. The special case of $\rho=1$ reduces to the standard Wigner matrix. 
See \cite{sommer+etal_spectrum+asymm_RM_PRL_88}, \cite{gotze+etal_RM+correlatedentry_RMTA_15} and \cite{nguyen+orourke_elliptic+law_IMRN_15}. 
%and references therein.

The joint convergence of independent elliptic matrices and their asymptotic freeness was established in \cite{bose+adhikari_Brownmeasure_19}. The \textit{cross-covariance matrix} is defined as $n^{-1}XY^{\prime}$. 
Assuming that the pairs $(x_{i,j}, y_{i, j})$ are i.i.d., \cite{monika+bose+dey_JC+samplecov_ALEA_23} proved the joint convergence of independent cross-covariance matrices.  

One may also allow some specific pattern of correlation instead of the constant correlation. See \cite{ziliang_RM+gen+correletedentry_EJP_17} for some of these models. 

\subsection{Joint convergence of Toeplitz and Hankel matrices}
We finally turn to the question of joint convergence of these two matrices. 
Apart from the theoretical curiosity as to what extent the joint convergence of independent Wigner matrices can be extended to the joint convergence of Toeplitz and/or Hankel matrices, there are other reasons why this becomes important. 

First, we have already seen the connection between the convergence of the EESD and $*$-convergence. In the absence of any LSD results for the non-symmetric Toeplitz and Hankel matrices, the question of algebraic convergence gains importance.

In Section \ref{subsec:toe+Han_def}, we have seen the connection between the Toeplitz and Hankel matrices ($H_{n,s}=P_nT_n$). From this relation, it is clear that the problem of joint convergence of symmetric Hankel matrices is connected to the problem of joint convergence of $P_n$ and $T_n$. We have already mentioned that the joint convergence  of i.i.d.~copies of $n^{-1/2} T_{n,s}$ and $n^{-1/2} H_{n,s}$ was 
established in \cite{bose_saha_patter_JC_annals} when $\{a_j\}$ is real i.i.d.~and all its moments are finite. 
The deterministic Toeplitz and the deterministic Hankel matrices are of the form $D_n=((d_{i-j}))_{i,j=1}^n$, and $P_nD_n$ respectively, where $\{d_k; k \in \mathbb{Z}\}$ is a sequence of complex numbers. 
Joint convergence of these matrices and some of their interesting extensions were covered in \cite{bose+sreela+saha_Toe+JC_13}.

Let us now turn to the random Topelitz and Hankel matrices, and look more closely at the $(i,j)$th and $(j,i)$th entries of the Toeplitz matrix. First, note that in the asymmetric case, we assume that they are independent of each other. In the symmetric case we assume that they are identical. Moreover, anticipating the expected universality of the limits, let us for the moment, assume that this pair of entries is Gaussian. Then these two models can be brought under the unified umbrella where we assume they are jointly Gaussian with a correlation $\rho$: values of 
$\rho=1$ and $\rho=0$ yield the above two models. This leads to the consideration of the 
Toeplitz and Hankel matrices with correlated entries. 

%Incidentally, 

%have acertain and asymmetric Toeplitz matrix and suppose that the entries
Due to the above reasons, we are naturally led to the broad question of joint convergence of deterministic and/or random Toeplitz and Hankel matrices with correlated entries.  
We introduce specific patterns of correlation in these matrices and also allow the entries to %We look more closely at the Toeplitz and the Hankel matrices. 
be complex-valued and study the joint convergence of independent copies of these matrices. 
This gives several different deterministic and random matrix models and we explore their joint convergence. All previous results on the joint convergence of random and deterministic Toeplitz and Hankel matrices are special cases of the results that we shall establish. 

Finally, if we consider any symmetric matrix polynomial $Q_n$  obtained from the classes of matrices that we discuss, then our general joint convergence results, the tracial moments of $Q_n$ converge. Then by appealing to Gaussian bounds for these moments, we can conclude that ESD of $Q_n$ converges. See Corollary \ref{cor:poly_Lsd_T} given later. It may be noted this result is hard to prove using any other method such as the Stieltjes transform.

\section{Main results} % {tndnpn} }
\label{sec:mainresults}
\subsection{Results under pair correlated structure}
%To study their joint convergence when the input sequence are possibly \textit{complex} random variables with a \textit{pair-correlation} structure, we introduce the following assumption. 
We first introduce a pair-correlation structure on the input sequence
%, and have  possibly pair-correlated structure. This structure is controlled by 
through the following assumptions. 
%:=x_{j,n}+\mathrm{i} y_{j,n}
%$\{a_{j,n}:=x_{j,n}+\mathrm{i} y_{j,n}\}$
%The most general assumption that we shall make about the input sequence is the following:
% \begin{assumption}\label{assump: x_i all moments}
% 	$\left\{x_i: i \in \Z^d\right\}$ are i.i.d. with mean 0, variance 1 and $-\infty < \E(x_i^k)=m_k < \infty$ for all $k \geq 3$.
% \end{assumption}
% 
% Assumption \ref{assump: x_i all moments}. 
 
 \begin{assumption} \label{assump:toeI}
%\blue{Modified, mean and variance representation} 
Suppose $\{a_{j,n}= x_{j,n}+ \mathrm{i} y_{j,n}; j \in \mathbb{Z}\}$ are \textit{complex} random variables with $\E(a_{j,n})=0$, %$\E|a_{j,n}|^2=1$, 
and for every $n\geq 1$, $\{(x_{j,n}, x_{-j,n}, y_{j,n}, y_{-j,n})\}$ are independent across $j$. Further, for all $n \geq 1$, 
\begin{enumerate}
	\item [(i)] $\E[x_{j,n}]^2=\sigma_x^2$, $\E[y_{j,n}]^2=\sigma_y^2$ \ \text{for all} $j\in \Z$.
 %  \ $\text{for all}\ \ j \in \mathbb{Z}.$
	\item[(ii)] $\E[x_{j,n}y_{j,n}]=\rho_1$, $\E[x_{j,n}x_{-j,n}]=\rho_2$, $\E[x_{j,n} y_{-j,n}]=\rho_3$, \\
	$\E[x_{-j,n} y_{j,n}]= \rho_4, \ \E[y_{j,n} y_{-j,n}]= \rho_5, \ \E[x_{-j,n} y_{-j,n}]= \rho_6$ \ \text{for all} $j \geq 0$.  
	\item[(iii)] $ \displaystyle \sup_{j,n} \{\E(|x_{j,n}|^k),  \E(|y_{j,n}|^k)\} \leq c_k < \infty \ \ \text{for all $j\in \Z$ and } k\geq 1.$ 
	%\red{Is the notation $x_{j,n}$ fine ??}
\end{enumerate}
\end{assumption}
 For brevity, henceforth we write $a_{j}, x_{j}, y_{j}$ respectively for $a_{j,n}, x_{j,n}, y_{j,n}$.
We must have $0 \leq \rho_i\leq 1$ for all $1\leq i \leq 6$, since we have an infinite sequence of equally correlated variables.  Note that Assumption \ref{assump:toeI} holds if $\{a_j\}$ is i.i.d.~$\E(a_j)=0$, $\E|a_j|^2=1$, $\E|a_j|^k < \infty$ for $k\geq 1$. 
%\red{Should we write this as a Remark ?} NO. 

 The \textit{Hermitian} Toeplitz and Hankel matrices, $T_{n,h}$ and $H_{n,h}$ are obtained by imposing $a^*_{i-j}=a_{j-i}$ in $T_n$ and $H_n$. The \textit{real symmetric} Toeplitz and Hankel matrices, $T_{n,s}$ and $H_{n,s}$ are obtained  
%with the following Assumption.\textbf{No need for separate assumption. 
by taking $y_j=0$ and $\rho_2=\sigma_x^2$.
% in $T_n$ and $H_n$. 
The \textit{real asymmetric} Toeplitz and Hankel matrices are obtained by taking $y_j=0$.
%and $\rho_2=0$.
%\textbf{??this is still not correct. Why does ONLY $\rho_2=0$ give asymmetry??}
 No restriction is required on $\rho_2$.

In this article we work with deterministic Toeplitz and Hankel matrices that satisfy the stronger summability condition than the square summability assumed in \cite{szego_toeplitz_915}. This is helpful for the technical arguments in the proof of our results.
 \vskip5pt
 %\noindent \textbf{Assumption II } 
 \begin{assumption} \label{assump:determin}
 	Let $\{d_k; k \in \mathbb{Z}\}$ be a sequence of complex numbers which is absolutely summable, that is, $\sum_{k= -\infty}^{\infty} |d_k| < \infty$.
 \end{assumption} 
   %   The following assumption we need on the entries of such matrices.
 % which is absolutely summable, that is, $\sum_{k= -\infty}^{\infty} |d_k| < \infty$.
  %Now we state our first main result on Toeplitz matrices.
 Our first main result provides the joint convergence of $D_n$, $P_n$ and independent copies of $T_n$. Its proof is given in Section \ref{sec:jc_tdp}.
 \begin{theorem}\label{thm:JC_tdp}
 Let $A_{n,1}=n^{-1/2}T_n$ where $T_n$ is a random Toeplitz matrix. For $i=1,2, \ldots, m$, let $\{A_{n,1}^{(i)}\}$ be independent copies of $A_{n,1}$ and  $A_{n,2}^{(i)}=D^{(i)}_n$ be $m$ deterministic Toeplitz matrices. Suppose the input entries of $T_n$ satisfy Assumption \ref{assump:toeI} and the input entries of $D^{(i)}_n$   satisfy Assumption \ref{assump:determin}.
 Then $\{P_n, A_{n,j}^{(i)}; 1\leq j \leq 2, 1\leq i\leq m\}$ converge jointly.
 % and the limit $*$-moments is given in (\ref{eqn:lim_phi(PTD)_fin}).
  In particular, $\{P_n, n^{-1/2}T_n, D_{n} \}$ converge jointly.
 \end{theorem}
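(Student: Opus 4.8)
The plan is to establish the joint convergence via the moment method, computing the limit of $\varphi_n$ applied to an arbitrary $*$-monomial built from the three types of matrices $P_n$, $A_{n,1}^{(i)} = n^{-1/2}T_n^{(i)}$, and $A_{n,2}^{(i)} = D_n^{(i)}$. By the definition in \eqref{eq:star_conv}, it suffices to show that for every word
$$
W_n = B_{1}^{\epsilon_1} B_{2}^{\epsilon_2} \cdots B_{k}^{\epsilon_k},
$$
where each $B_\ell$ is one of $P_n$, a random normalized Toeplitz factor, or a deterministic Toeplitz factor, the trace $\varphi_n(W_n) = n^{-1}\E[\Tr(W_n)]$ converges as $n\to\infty$. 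First I would expand this trace as a sum over indices $i_0, i_1, \ldots, i_{k-1} \in \{1,\ldots,n\}$ with $i_k = i_0$, so that
$$
\varphi_n(W_n) = \frac{1}{n}\sum_{i_0,\ldots,i_{k-1}} \E\!\Big[\prod_{\ell=1}^{k}(B_\ell)_{i_{\ell-1},i_\ell}^{\epsilon_\ell}\Big].
$$
The key observation is that each matrix type contributes a specific structure to the generic entry: the permutation $P_n$ forces $(P_n)_{r,s} = \ind\{r+s = n+1\}$, a deterministic Toeplitz factor contributes a bounded constant $d^{(i)}_{i_{\ell-1}-i_\ell}$ (or its conjugate), and a random Toeplitz factor contributes $n^{-1/2} a_{i_{\ell-1}-i_\ell}$ (or its conjugate). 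Taking the $*$ operation on a Toeplitz factor swaps the sign of the index difference and conjugates the entry.

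The core of the argument is to separate the randomness from the deterministic structure. I would first take the expectation over only the random entries. Because the random inputs satisfy Assumption 1, with the quadruples independent across $j$, the expectation factorizes over the distinct values of the index differences arising from the random factors. Anticipating universality, only pair-partitions of the random factors survive in the limit: monomials in which each random index-difference is matched with exactly one partner (either equal or negated, corresponding to one of the six correlation parameters $\rho_1,\ldots,\rho_6$ together with $\sigma_x^2, \sigma_y^2$), while contributions from partitions having a block of size one vanish (by the zero-mean assumption) and contributions from blocks of size $\geq 3$ are negligible after normalization (this is where the uniform moment bound (iii) in Assumption 1 is used). This reduces the computation to a sum over pair-partitions $\pi$ of the positions of the random factors, weighted by the appropriate product of correlation constants and by the product of deterministic entries and permutation constraints along the word.

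For each fixed pair-partition $\pi$, the surviving term is a constrained sum over the indices $i_0,\ldots,i_{k-1}$ subject to the pairing relations (each pairing imposes a linear constraint of the form $i_{\ell-1}-i_\ell = \pm(i_{\ell'-1}-i_{\ell'})$), the permutation constraints $i_{r-1}+i_r = n+1$ from each $P_n$, and with weight given by the deterministic Toeplitz coefficients $d^{(i)}_{\cdot}$ evaluated at the relevant differences. Rescaling the indices as $u_\ell = i_\ell/n \in (0,1]$ turns the leading-order count into a Riemann sum; the $n^{-1}$ prefactor together with the $n^{-1/2}$ per random factor fixes the exact power of $n$, and only those partitions whose constraint systems admit a full-dimensional solution set contribute. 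Absolute summability of $\{d_k\}$ (Assumption 2) guarantees that the deterministic weights produce convergent sums and that the Riemann-sum approximation is legitimate, since $\sum_k |d_k| < \infty$ lets me replace discrete sums against $d_k$ by their limiting integrals uniformly. The limit $\varphi(\cdot)$ is then read off as a sum over such partitions of products of correlation constants, integrals of the deterministic symbols, and combinatorial volume factors, manifestly independent of the underlying distributions—establishing universality.

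The main obstacle I anticipate is the bookkeeping of the permutation matrix $P_n$ interacting with the Toeplitz index differences. Unlike a pure Toeplitz word, where every factor contributes a \emph{difference} $i_{\ell-1}-i_\ell$ and the constraints are translation-invariant, each occurrence of $P_n$ contributes a \emph{sum} $i_{\ell-1}+i_\ell=n+1$, which breaks translation invariance and couples the two adjacent indices affinely rather than linearly. I therefore expect the delicate step to be verifying which mixed partition-plus-permutation constraint systems remain full-dimensional (hence contribute in the limit) and which collapse, and checking that the affine reflections introduced by $P_n$ are compatible with the Riemann-sum passage to the limit—in particular tracking how an odd versus even number of $P_n$ factors between two paired Toeplitz entries flips a difference into a sum and thereby selects between the Toeplitz-type and Hankel-type limiting contributions. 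Once this compatibility is confirmed, the special case $\{P_n, n^{-1/2}T_n, D_n\}$ follows immediately by taking $m=1$.
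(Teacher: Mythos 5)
Your proposal is correct in substance and rests on the same underlying machinery as the paper---trace expansion of an arbitrary $*$-monomial, reduction to pair partitions via the mean-zero and uniform moment assumptions, dimension counting for the mixed linear/affine constraint systems (including the reflection constraints $i_{r-1}+i_r=n+1$ contributed by $P_n$), and a Riemann-sum passage to the limit---but you organize it as a single direct computation, which is precisely the route the paper declines to take (``the combinatorics involved in a direct proof \dots would be somewhat intricate''). The paper instead modularizes: it proves four special cases first (independent copies of $T_n$ in Proposition \ref{thm:toeplitz}; $\{D_n,P_n\}$ in Proposition \ref{thm:jc_D+P}; $\{T_n,D_n\}$ in Proposition \ref{thm:jc_T+D}; $\{T_n,P_n\}$ in Proposition \ref{thm:jc_T+P}), each resting on closed-form trace formulas (Lemmas \ref{lem:tracetoeplitz} and \ref{lem:hankel}) obtained from shift-matrix representations, and it uses cyclicity of the trace together with $P_n^2=I_n$ to put every word into the canonical form $(P_nM\cdots M)(P_nM\cdots M)\cdots$, whence an odd number of $P_n$ factors pins the diagonal index (the constraint $2j-1-n=\mbox{const}$) and forces the term to vanish---this is the clean resolution of the parity issue you flag as the delicate step. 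The one genuine device in the paper that your sketch does not name is the truncation $D_{n,n^\alpha}$ of (\ref{def:D_n,m}) (Lemma \ref{lem:lim,D_Dn,m} and Remark \ref{rem:obse_D,D_m}): because the surviving deterministic indices are $o(n)$, they drop out of the indicator constraints (cf.\ (\ref{eqn:chi_D_remove})), and the limit of any mixed moment factorizes exactly as a $(P,D)$-limit times a $(P,T)$-limit, as in (\ref{eqn:lim_ph(Dm*,T*)}). Your inline treatment via absolute summability and dominated convergence would accomplish the same decoupling and is legitimate, but you would still need to prove this factorization (or an equivalent uniform approximation statement) explicitly; the paper's decomposition buys explicit limit formulas and tamer combinatorics, while your single-pass argument is conceptually more compact but concentrates all of the bookkeeping into one step.
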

 
The following corollary follows from  Theorem \ref{thm:JC_tdp}:
 \begin{corollary} \label{cor:jc_dtp_hankel}
 	Suppose $\{T_{n}^{(i)}; 1\leq i\leq m\}$ and $\{H_{n,s}^{(i)}; 1\leq i\leq m\}$ are $m$ independent copies respectively of random Toeplitz and symmetric Hankel matrices, where the input entries satisfy Assumption \ref{assump:toeI}. Also, let $\{D_{n}^{(i)}; 1\leq i\leq m\}$ and $\{\tilde{H}_{n,s}^{(i)}; 1\leq i\leq m\}$ respectively be deterministic Toeplitz and deterministic symmetric Hankel matrices, whose input entries satisfy Assumption \ref{assump:determin}. Let $X^{(i)}_n$ and $Y^{(i)}_n$ be any of the matrices; $n^{-1/2}T_{n}^{(i)}$, $n^{-1/2}H_{n,s}^{(i)}$, $D_{n}^{(i)}$, $\tilde{H}_{n,s}^{(i)}$ and $P_n$. Then $\{X^{(i)}_n, Y^{(i)}_n; 1 \leq i \leq m\}$ converge jointly.
 % , then we have the following:\red{(i) where is $D_n$ in the statement? (ii) instead of listing (which is boing to read), why not say any pair of ??? converge??}
 %	\begin{itemize}
 %		\item [(i)]  $\{n^{-1/2}T_{n}^{(i)}, n^{-1/2}H_{n,s}^{(i)}; 1 \leq i \leq m\}$ converge jointly.
 %		\item [(ii)]  $\{P_n, n^{-1/2}H_{n,s}^{(i)}; 1 \leq i \leq m\}$ converge jointly.
 %		\item [(iii)]  $\{P_n, n^{-1/2}H_{n,s}^{(i)}; 1 \leq i \leq m\}$ converge jointly.
 %		\item [(iv)] $\{n^{-1/2}T_{n}^{(i)}, \tilde{H}_{n,s}^{(i)}; 1 \leq i \leq m\}$ converge jointly.
 %		\item [(v)] $\{n^{-1/2}H_{n,s}^{(i)},  \tilde{H}_{n,s}^{(i)}; 1 \leq i \leq m\}$ converge jointly.
 %		\item [(vi)] $\{P_n, \tilde{H}_{n,s}^{(i)}; 1 \leq i \leq m\}$ converge jointly. 
 %	\end{itemize}
 \end{corollary}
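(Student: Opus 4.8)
The plan is to observe that every matrix appearing in the corollary is a short word in the three generating families whose joint convergence is already supplied by Theorem \ref{thm:JC_tdp}, and then to invoke the elementary fact that joint $*$-convergence is inherited by $*$-polynomials. Concretely, write $A_{n,1}^{(i)}=n^{-1/2}T_n^{(i)}$ and $A_{n,2}^{(i)}=D_n^{(i)}$ as in the theorem. Since our symmetric Hankel matrices are by convention of the form $P_nT_n$ (see Section \ref{subsec:toe+Han_def}), the random and deterministic symmetric Hankel matrices factor as
\begin{align*}
n^{-1/2}H_{n,s}^{(i)}=P_nA_{n,1}^{(i)},\qquad \tilde H_{n,s}^{(i)}=P_nA_{n,2}^{(i)}.
\end{align*}
Thus each of the five matrix types $n^{-1/2}T_n^{(i)}$, $n^{-1/2}H_{n,s}^{(i)}$, $D_n^{(i)}$, $\tilde H_{n,s}^{(i)}$, $P_n$ is either a generator or the product of $P_n$ with a generator.

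Next I would use the standard principle that if a family of random matrices converges jointly in $*$-distribution, then so does any family obtained by forming words in those matrices: a mixed $*$-moment of the words is, after substitution, itself a mixed $*$-moment of the original generators, hence converges. The relevant bookkeeping is controlled by the fact that $P_n$ is a real symmetric involution, so $P_n^*=P_n$, $P_n^2=I_n$, and $(P_nA)^*=A^*P_n$ for any $A$. Consequently, the adjoint of each Hankel matrix is again a word in $\{P_n,A_{n,1}^{(i)},(A_{n,1}^{(i)})^*\}$ (respectively with $A_{n,2}^{(i)}$), and no new symbols are introduced.

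Carrying this out, I would fix any $k\ge 1$, exponents $\eta_1,\ldots,\eta_k\in\{1,*\}$, and a choice $Z_1,\ldots,Z_k$ of matrices from the list in the corollary. Substituting the factorizations above and using $(P_nA)^*=A^*P_n$ expresses the product $Z_1^{\eta_1}\cdots Z_k^{\eta_k}$ as a single word $W_n$ in the generators $\{P_n,A_{n,1}^{(i)},A_{n,2}^{(i)};1\le i\le m\}$ and their adjoints. By Theorem \ref{thm:JC_tdp} the limit $\lim_{n\to\infty}\vp_n(W_n)$ exists, and since these limits exist for all such choices they automatically define a $*$-probability space (as noted after \eqref{eq:star_conv}). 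This is precisely the assertion that $\{X_n^{(i)},Y_n^{(i)};1\le i\le m\}$ converge jointly.

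The content of the corollary therefore lies entirely in Theorem \ref{thm:JC_tdp}, and there is no genuine analytic obstacle. The only step requiring care—error-prone rather than difficult—is the faithful conversion of adjoints through $(P_nA)^*=A^*P_n$, together with the use of $P_n^2=I_n$ to collapse any repeated occurrences of $P_n$ without leaving the span of the generators; these manipulations guarantee that the reduced word $W_n$ is genuinely of the form covered by the theorem's joint convergence statement.
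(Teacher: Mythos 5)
Your proposal is correct and takes essentially the same route as the paper, which states the corollary without a separate proof precisely because it follows from Theorem \ref{thm:JC_tdp} via the factorizations $n^{-1/2}H_{n,s}^{(i)}=P_n\bigl(n^{-1/2}T_n^{(i)}\bigr)$ and $\tilde H_{n,s}^{(i)}=P_nD_n^{(i)}$ together with the closure of joint $*$-convergence under words in the generators (using $P_n^*=P_n$, $P_n^2=I_n$, $(P_nA)^*=A^*P_n$). One small bookkeeping caution: if the Hankel copies are meant to be independent of the Toeplitz copies, you should factor them as $P_n$ times \emph{fresh} independent Toeplitz matrices rather than reusing $A_{n,1}^{(i)}$, and then invoke Theorem \ref{thm:JC_tdp} with $2m$ independent copies in place of $m$ --- harmless, since the theorem holds for any finite number of copies.
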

   As a special case of Theorem \ref{thm:JC_tdp}, when the input entries are real, under Assumption \ref{assump:toeI}, independent copies of $n^{-1/2}T_n$ and $n^{-1/2}H_{n,s}$
 %where $T_n$ are Topelitz matrices 
 converge jointly, and the limit depends only on $\rho_2$. 
 %\textbf{1. Can we let $\rho$ depend on the diagonal index in some reasonable way? 2. Can we have lower and upper half of each diagonal as correlated, as discussed in my hand written note}
 This implies the joint convergence results of \cite{bose_saha_patter_JC_annals} for the special case of symmetric Toeplitz and Hankel matrices with independent entries.
 % for i.i.d.~copies of \textit{real symmetric} $n^{-1/2}T_{n,s}$ and $n^{-1/2}H_{n,s}$ with i.i.d.~inputs that satisfy Assumption I. 
 The LSD results of \cite{hammond_miller_05} and  \cite{bryc_lsd_06} when the input is i.i.d.~and only the finiteness of the second moment is assumed, also follow via a truncation argument.

 %The following remark yield some observations and conclusions which follow from Theorem \ref{thm:JC_tdp}.
  %	The following 
   Joint convergence results 
   %hold 
   in the following matrix models also follow 
%   conclusions can be drawn 
from Theorem \ref{thm:JC_tdp}:
% \begin{remark}\label{rem:cor_jc_tdp} 	
(a) random Toeplitz matrices with scaling $n^{-1/2}$ and complex input entries (see Proposition \ref{thm:toeplitz} in Section \ref{subsec:jc_T}); (b) random Hermitian and symmetric Toeplitz matrices scaled by $n^{-1/2}$ (see Corollaries \ref{cor:A2+3_TJc} and \ref{cor:iid_jc_T} in Section \ref{subsec:jc_T}); (c) matrices $\{D_n, P_n\}$ (see Proposition \ref{thm:jc_D+P} in Section \ref{subsec:dp}); (d) matrices $\{D_n, n^{-1/2}T_n\}$ (see Proposition \ref{thm:jc_T+D} in Section \ref{subsec:td}); (e) matrices $\{n^{-1/2}T_n, P_n\}$ (see Proposition \ref{thm:jc_T+P} in Section \ref{subsec:Tp}), and (f) symmetric Hankel matrices (see Remark \ref{cor:com_HJc} in Section \ref{subsec:Tp}).

%Some LSD results can also be obtained. 
Further, 
%Essentially, 
%wherever joint convergence holds, 
if we take any \textit{symmetric} matrix polynomial of matrices that converge jointly, then its LSD will also exist.
%hold. 
See Corollaries \ref{cor:poly_Lsd_T} and 
\ref{cor:A2+3_HJc} respectively in Sections \ref{subsec:jc_T} and \ref{subsec:Tp}.
 
   \subsection{Extended model} \label{subsec:gen_toe}
 %\end{remark}
While the characteristic feature of the Toeplitz and the Hankel matrices is the repetitions of the input entries over (parts of) each diagonal or anti-diagonal, let us consider a further generalization as follows: Define  $T_{n,g}=((t_{i,j}))_{n\times n}$, where
 \begin{equation} \label{def:T_gen}
 	t_{i,j}=\l\{\begin{array}{ccl}
 		a_{i-j} & \mbox{ if } & i+j\le n,
 		\\ \\
 		b_{i-j} & \mbox{ if } &  i+j\ge n+1.
 	\end{array}
 	\r.
 \end{equation}
We call it a \textit{generalized Toeplitz matrix}. For example,  $T_{5,g}$ is given by
%below.
 \[
 T_{5,g}=\l(\begin{array}{ccccc}
 	a_0 & a_{-1} &a_{-2} & a_{-3} & b_{-4} \\
 	a_1 & a_{0} & a_{-1} & b_{-2} & b_{-3}  \\
 	a_2 & a_{1} & b_0 & b_{-1} & b_{-2}\\
 	a_3 & b_2 & b_{1} & b_0 & b_{-1} \\
 	b_4 & b_3 & b_2 & b_{1} & b_0
 \end{array}
 \r).
 \]
 In other words, the $(i,j)$-th element of $T_{n,g}$ is given by 
 $$t_{i,j}=a_{i-j}\chi_{[2,n]}(i+j)+b_{i-j}\chi_{[n+1,2n]}(i+j), \ \ \text{for} \ \ 1\le i,j\le n,$$ 
 where	$\chi_{[1,n]}(z) =1\;\;\mbox{if $z \in [1,n]$  and zero otherwise}$.
 	
 Note that 
 %the study of generalized Toeplitz matrices are important in the sense that 
 $P_nT_{n,g}$ is always a (non-symmetric) Hankel matrix.
 %, where $P_n$ is the {\it backward identity} permutation matrix  of dimension $n$.
 For example,  $H_{5}= P_5T_{5,g}$ is given by
 \[
 H_{5}=\l(\begin{array}{ccccc}
 	b_4 & b_3 & b_2 & b_{1} & b_0\\
 	a_3 & b_2 & b_{1} & b_0 & b_{-1} \\
 	a_2 & a_{1} & b_0 & b_{-1} & b_{-2}\\
 	a_1 & a_{0} & a_{-1} & b_{-2} & b_{-3} \\
 	a_0 & a_{-1} &a_{-2} & a_{-3} & b_{-4} \\
 \end{array}
 \r)
 \]
is a Hankel matrix as given in (\ref{eqn:Hankel}).
For the symmetric Hankel matrix ($H_{n,s}$), the relation $H_{n,s}=P_nT_n$ has been used in \cite{liu_wang2011} and \cite{liu2012fluctuations}, to study its LSD and linear spectral statistics. This relation shows that the convergence of any non-symmetric Hankel matrix can be framed in terms of the joint convergence of $P_n$ and $T_n$. 
%Now in the similar flavor, this canonical representation of $H_n$, $H_{n}= P_nT_{n,g}$ gives us a direction to deal with the problems of asymmetric Hankel matrices in terms of $T_{n,g}$.}
Since $H_n=P_nT_{n,g}$, the  joint convergence of $P_n$ and independent copies of $n^{-1/2}T_{n,g}$ would imply the joint convergence of independent copies of $n^{-1/2}H_n$.
Observe that for generalized Toeplitz matrices, we have used two labels $a$ and $b$. Analogously, the generalized deterministic Toeplitz matrix, denoted by $D_{n,g}$ is also defined via  (\ref{def:T_gen}) using two sequences of complex numbers $\{d'_i\}_{i\in \Z}$ and $\{d{''}_i\}_{i\in \Z}$. The corresponding deterministic Hankel matrix is defined as $P_nD_{n,g}$.
 
%We study the joint convergence of generalized Toeplitz matrices when the input entries are \textit{complex} random variables with a \textit{pair-correlation} structure.  
We now state the following assumption on the input sequences. 
%$T_{n,g}$.
%the entries of these generalized Toeplitz matrices.
\begin{assumption} \label{assump:toe_gII}
	  Let $\{(a_{j,n}= x_{j,n} + \mathrm{i} y_{j,n}, b_{j,n}= x'_{j,n}+ \mathrm{i} y'_{j,n}); j \in \mathbb{Z}\}$  be independent \textit{complex} random variables with mean zero, variance one, and $\{(x_{j,n}, y_{j,n}, x'_{j,n}, y'_{j,n}); j\in \Z \}$ have the following correlation. 
\begin{enumerate}
\item[(i)] $\E[x_{j,n}y_{j,n}]=\rho_1$, $\E[x_{j,n} x'_{j,n}]=\rho_2$, $\E[x_{j,n} y'_{j,n}]=\rho_3$, \\
	$\E[x'_{j,n} y_{j,n}]= \rho_4, \ \E[x'_{j,n} y'_{j,n}]= \rho_5, \ \E[y_{j,n} y'_{j,n}]= \rho_6$.
	\vskip3pt
	\item[(ii)] $ \displaystyle \sup_{j,n} \{\E(|x_{j,n}|^k),  \E(|y_{j,n}|^k), \E(|x'_{j,n}|^k),  \E(|y'_{j,n}|^k)\} \leq c_k < \infty \  \text{for all}\ \ k\geq 1.$
\end{enumerate}
\end{assumption}
%\begin{remark}
%	Note that the in Assumption III, we considered that $a_{j,n}$ and $a_{-j,n}$ ($b_{j,n}$ and $b_{-j,n}$) are no correlated. In the case, $a_{j,n}$ and $a_{-j,n}$ ($b_{j,n}$ and $b_{-j,n}$) are correlated, the proof of result will be very difficult to follow. One can assume such type of assumptions also and can establish Theorem \ref{thm:gen_tdp_com} under such conditions. For the proof, the similar idea will work. 
%\end{remark}	
%\red{drop? this is repetition Some references for the joint convergence of other random matrices with a \textit{pair-correlation} structure are \cite{bose+adhikari_Brownmeasure_19} and \cite{monika+bose+dey_JC_2022}.}  
Our second main result  yields the joint convergence of $D_{n,g}$, $P_n$ and independent copies of $T_{n,g}$. Its proof is given in Section \ref{sec:Jc_tdp_g}.
\begin{theorem}\label{thm:gen_tdp_com}
	Let $B_{n,1}=n^{-1/2}T_{n,g}$ where $T_{n,g}$ is a random  generalized Toeplitz matrix. For $i=1,2, \ldots, m$, let $\{B_{n,1}^{(i)}\}$ be independent copies of $B_{n,1}$ and $B_{n,2}^{(i)}= D^{(i)}_{n,g}$ be $m$ deterministic generalized Toeplitz matrices. Suppose the input sequences of $T_{n,g}$ satisfy Assumption \ref{assump:toe_gII} and the input sequences of $D_{n,g}$   satisfy Assumption \ref{assump:determin}.
Then $\{P_n, B_{n,j}^{(i)}; 1\leq j \leq 2, 1\leq i\leq m \}$ converge jointly. In particular, $\{P_n, n^{-1/2}T_{n,g}, D_{n,g}\}$ converge jointly.
\end{theorem}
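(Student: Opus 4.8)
The plan is to prove the joint convergence by the method of moments, following the template of the proof of Theorem \ref{thm:JC_tdp} and computing the limit of every $*$-moment through its trace expansion. By the definition \eqref{eq:star_conv}, it suffices to show that for every word $w=C_1C_2\cdots C_k$, where each $C_\ell$ is one of $P_n$, $n^{-1/2}T_{n,g}^{(i)}$, $D_{n,g}^{(i)}$, or an adjoint of these, the normalized expected trace $\varphi_n(w)=\frac1n\E[\Tr(w)]$ converges. Expanding the trace over circuits $i_0\to i_1\to\cdots\to i_{k-1}\to i_0$, each factor $C_\ell$ contributes the single entry indexed by $(i_{\ell-1},i_\ell)$: for a $T_{n,g}$-factor this is $a_{i_{\ell-1}-i_\ell}$ or $b_{i_{\ell-1}-i_\ell}$, the choice dictated by whether $i_{\ell-1}+i_\ell\le n$ or $\ge n+1$ (with a conjugate and a sign-flip of the index if the factor is an adjoint); for a $D_{n,g}$-factor it is the corresponding deterministic value $d'_{i_{\ell-1}-i_\ell}$ or $d''_{i_{\ell-1}-i_\ell}$; and a $P_n$-factor imposes the constraint $i_{\ell-1}+i_\ell=n+1$. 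I would separate the random factors from the deterministic ones and take expectation only over the former.

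First I would take expectation over the random entries. Since under Assumption \ref{assump:toe_gII} the pairs $(a_j,b_j)$ are independent across $j$ and centred, the expectation of the product of random entries factorizes over the partition of the random factors according to their index $j_\ell:=i_{\ell-1}-i_\ell$ (up to the sign-flip from an adjoint), and vanishes unless every block has size at least two. Combining the scaling $n^{-k_r/2}$ from the $k_r$ random factors with the $1/n$ from the trace and the uniform moment bound of Assumption \ref{assump:toe_gII}(ii), a counting argument identical in spirit to that for Theorem \ref{thm:JC_tdp} shows that blocks of size $\ge 3$ are asymptotically negligible and that only pair-partitions of the random factors survive; in particular all odd moments vanish. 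A surviving pair $\{\ell,\ell'\}$ forces $j_\ell=j_{\ell'}$, and its covariance is a constant read off from Assumption \ref{assump:toe_gII}: it is governed by the second moments of the real and imaginary parts of $a_j$ together with $\rho_1$ when both factors are $a$-type, by $\rho_2,\ldots,\rho_6$ when the two factors are of opposite type, and by the second moments of the real and imaginary parts of $b_j$ together with $\rho_5$ when both are $b$-type, the precise value depending also on the conjugation pattern of the two factors. The structural difference from Theorem \ref{thm:JC_tdp} is that here distinct inputs are correlated only across types at the same index ($a_j$ with $b_j$), whereas under Assumption \ref{assump:toeI} the correlation was across opposite indices ($a_j$ with $a_{-j}$); consequently a surviving pair matches equal indices, with its two factors lying in the same or in opposite triangular regions according to the positions of the circuit.

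Next I would pass to the limit by rescaling the indices, setting $u_\ell=i_\ell/n\in(0,1]$. The matching constraints $j_\ell=j_{\ell'}$ equate the corresponding rescaled slopes; the region conditions $i_{\ell-1}+i_\ell\lessgtr n$ become $u_{\ell-1}+u_\ell\lessgtr 1$; and each $P_n$-factor becomes the reflection $u_\ell=1-u_{\ell-1}$. Because the deterministic sequences are absolutely summable (Assumption \ref{assump:determin}), each $D_{n,g}$-factor forces its two adjacent indices to be within $O(1)$ of one another—so they coincide after rescaling—and contributes the convergent sum of its entries appropriate to its region; such a factor therefore identifies two positions rather than consuming a free slope. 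The remaining normalized sum is then a Riemann sum converging to a finite sum, over the pair-partitions of the random factors, of integrals over the unit cube whose integrands are products of the constant pair-covariances—selected pointwise according to whether each relevant $u_{\ell-1}+u_\ell$ is $\le 1$ or $>1$—and of the limiting deterministic coefficients. This exhibits $\lim_n\varphi_n(w)$ as a well-defined quantity depending only on $\rho_1,\ldots,\rho_6$ and the limiting deterministic coefficients, hence universal in the laws of the entries; that these limits constitute a genuine $*$-probability space is automatic, as noted after \eqref{eq:star_conv}.

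I expect the main obstacle to be the consistent bookkeeping of the $a$-versus-$b$ region indicators under the action of $P_n$. Since $P_n$ sends $i$ to $n+1-i$, it interchanges the two triangular regions, so a $P_n$-factor adjacent to a generalized-Toeplitz factor can turn an $a$-region into a $b$-region; tracking this through an arbitrary word, while simultaneously maintaining the pairing constraints, the $P_n$-reflections and the $O(1)$-pinning of the deterministic slopes, and then checking that the resulting piecewise integrals are well defined and assemble into a single consistent limit $\varphi(w)$, is the crux. Once this is established, the ``in particular'' assertion is simply the case $m=1$.
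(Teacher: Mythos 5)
Your proposal is correct and follows essentially the same route as the paper's proof: a moment-method trace expansion in which, by the independence of the pairs $(a_j,b_j)$ across $j$ under Assumption \ref{assump:toe_gII}, only equal-index pair matchings survive (all higher blocks and odd moments vanish), the triangular $a$/$b$ regions are tracked by indicator functions that rescale to the sets $\tilde\AA_z,\tilde\BB_z$ of (\ref{eqn:A_x,B_x}), the deterministic slopes are pinned via absolute summability exactly as in Remark \ref{rem:obse_D,D_m}, and Riemann sums produce the limiting integrals over $[0,1]\times[-1,1]^k$. The only differences are organizational: the paper derives the needed trace formulae through shift matrices (Lemmas \ref{lem:tracegeneralT} and \ref{lem:hankel_g}) and modularizes the analysis of an arbitrary word into Propositions \ref{thm:generalT_com}--\ref{thm:jc_T+P_g} before the final assembly in Section \ref{subsec:Tdp,g}, which in particular makes explicit a parity step that your proposal subsumes into its ``bookkeeping crux,'' namely that monomials with an odd number of $P_n$-factors (whose closure constraint $\d_{2j-1-n,\,\cdot}$ pins the diagonal index $j$) contribute $o(1)$.
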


 %\begin{remark}\label{rem:cor_jc_tdp_g}
 	%\noindent (i) 
 Note that Assumption \ref{assump:toe_gII} includes the case where $a_{j,n}$ and $a_{-j,n}$ as well as $b_{j,n}$ and $b_{-j,n}$ are not correlated, and moreover all $a_{j,n},b_{j,n}$ have variance one. In the cases where $a_{j,n}$ and $a_{-j,n}$ (and/or $b_{j,n}$ and $b_{-j,n}$) are correlated, and the variance of $a_{j,n},b_{j,n}$ are not equal, we can still derive convergence results using ideas similar to those in the 
   %idea as the 
   proof of Theorem \ref{thm:gen_tdp_com}.
   %can be used. 
    However, the  
   %are not the same, the involved 
   combinatorics 
   %of result 
   will be very intricate.
   %and difficult to follow. 
   %One can assume such type of assumptions also and can establish Theorem \ref{thm:gen_tdp_com} under such conditions. For the proof, one can use 
   	
	 The joint convergence of Hankel matrices follows from Theorem \ref{thm:gen_tdp_com} (see Remark \ref{cor:jc_H_g} in Section \ref{subsec:tp_g}).
	Observations as in  Corollary \ref{cor:jc_dtp_hankel} can also be made for generalized Toeplitz and related matrices.
%\end{remark}	

\begin{remark} 
	Suppose $T_{p\times n}$ and $H_{p \times n}$ are Toeplitz and Hankel matrices with \textit{complex} input entries, defined similar to $T_n$ and $H_n$ but are of the order $p\times n$ where $p\to \infty$ and $p/n \to y\in (0, \infty)$. Then it can be shown that, under Assumptions \ref{assump:toeI} and 
 %Assumption 
 \ref{assump:toe_gII}, the ESDs of $n^{-1}T_{p \times n}T_{p \times n}^{*}$ and $n^{-1}H_{p \times n}H_{p \times n}^{*}$ converge weakly a.s.. There is also convergence in $*$-distribution 
 for any finitely many independent copies of $T_{p \times n}T_{p \times n}^{*}$ and $H_{p \times n}H_{p \times n}^{*}$. 
 %\textbf{is it also for many copies???} 
 The limits 
	%and the $*$-distribution limits, both 
	depend only on the value of $\rho_i, 1 \leq i \leq 6$ and $y$. This will be clear from the proofs of Propositions \ref{thm:toeplitz}  and  \ref{thm:jc_T+P_g} given in Sections \ref{subsec:jc_T} and \ref{sec:T_n,g}, respectively.
\end{remark}

%Now we briefly outline the rest of the manuscript. In Section \ref{sec:jc_tdp}, we prove Theorem \ref{thm:JC_tdp} and in Section \ref{sec:Jc_tdp_g}, we prove Theorem \ref{thm:gen_tdp_com}.
\section{Proof of Theorem \ref{thm:JC_tdp}: joint  convergence of \texorpdfstring{$T_n$, $D_n$ and $P_n$}{tndnpn}}\label{sec:jc_tdp}
%{tndnpn}
We introduce some notation that will be used throughout. 
%which will appear throughout the manuscript.
We use the convention $0^0=1$. 
Consider the set $[n]=\{ 1,2 ,\ldots , n\}$.
Let $\pi=\{V_{1},\ldots,V_{r}\}$ be a partition of $[n]$. 
%, is denoted by $\mathcal{P}(n)$.
%The number of blocks of $\pi$ is denoted by $|\pi|$, and the number of	elements of $V_{j}$ is denoted by $|V_{j}|$.
The set of all  partitions and pair partitions of $[n]$ are respectively
%, i.e. when $|V_{j}|=2$, $1\leq j \leq r$, is
denoted by  $\mathcal{P}(n)$ and $\mathcal{P}_{2 }(n)$. 

%\red{For any pair partition $\pi$ of $[2k]$, we define a canonical ordering on its blocks by arranging them in the increasing order of their smallest elements. For example; $\{\{1,5\}, \{2,6\}, \{3, 4\} \}$ is an ordered partition of $[6]$ whereas $\{\{1,5\}, \{3, 4\},\{2,6\} \}$ is not. From this onwards, we shall always write partitions in their ordering. Also, we shall write elements of each block in their increasing ordering. So, a pair partition of $[2k]$ will always be of the form  $$\{(r_1,s_1), (r_2,s_2), \ldots, (r_k,s_k)\}, \ \text{where for each t, } r_t< r_{t+1} \mbox{ and } \ r_t<s_t.$$}

For any partition $\pi$ of $[2k]$, we write the blocks according to the increasing order of their first element. So, a pair partition will always be written as
$$\{(r_1,s_1), (r_2,s_2), \ldots, (r_k,s_k)\}, \ \text{where for each } t, r_t< r_{t+1} \mbox{ and } \ r_t<s_t.$$
Let $\pi=\{V_{1},\ldots,V_{k}\}$ be a pair partition of $[2k]$, where $V_t = (r_t,s_t)$. Then we define a projection map as
\begin{equation}\label{eqn:pi'}
	\mbox{$\pi' (i):=j$ if $i$ belongs to the block $V_{j}$}.
\end{equation}
%Furthermore for two elements $p,q$ of $[n]$ we write $p\thicksim_{\pi} q$ if $\pi' (p)=\pi' (q)$.

%Any $\pi \in \mathcal P_2(2k)$ will be written as 
%$$\pi=(r_1,s_1)\cdots (r_k,s_k),\ \ \text{where} \ \ r_t<s_t,  \ \text{for} \ \ t=1,\ldots, k.$$ 
%Then, for $t=1,\ldots, k$, define 
%\begin{align} \label{eqn:epsilon_pi}
	%\mbox{ $\pi'(r_t)=\pi'(s_t)=t$ and }	
%	\e_\pi(\ell):=\l\{\begin{array}{rll}
%		1 & \mbox{ if } & \ell=s_t,\\
%		-1& \mbox{ if } & \ell=r_t.
%	\end{array} \r.
%\end{align}
We also define: 
%For any set $A$,
\begin{align*}
%\one_{A}(x) &=1\;\;\mbox{if $x \in A$  and zero otherwise}, \\
	\d_{i,j} &=1\;\;\mbox{if $i=j$  and zero otherwise},
	%\chi_{[1,n]}(z) &=1\;\;\mbox{if $z\in [1,n]$  and zero otherwise},
\end{align*}
and for $\e_i \in \{1,*\}$,
\begin{align}\label{eqn:epsilon'}
	%\mbox{ $\pi'(r_t)=\pi'(s_t)=t$ and }	
	\e'_i :=\l\{\begin{array}{rll}
		1 & \mbox{ if } & \e_i =1,\\
		-1& \mbox{ if } & \e_i = *.
	\end{array} \r.
\end{align}

%In this section, we prove . 
The combinatorics involved in a direct proof of Theorem \ref{thm:JC_tdp}
%the statement 
%the proof are going to be 
would be somewhat intricate.
%if we prove it directly. So, we prove this 
Hence we break the proof into four steps. 
%We investigate the joint convergence and the limiting behavior for some special monomials in these matrices and then we prove Theorem \ref{thm:JC_tdp}.
%First, in 
In Sections \ref{subsec:jc_T}, 
%we study the joint convergence of independent copies of $T_n$. In Sections 
\ref{subsec:dp}, \ref{subsec:td} and  \ref{subsec:Tp},  we establish the joint convergence of independent copies of $T_n$, 
$\{D_n,P_n\}$, $\{D_n,T_n\}$ and $\{T_n,P_n\}$, respectively. Finally, the above steps help us to complete  the 
%ing all these steps, we 
proof of Theorem \ref{thm:JC_tdp} in Section \ref{subsec:jc_tdp}.

%At the heart of all the proofs, is an appropriate formulae that we develop for the traces of monomials of our matrices.
At the heart of all the proofs, we develop appropriate formulae (Lemmas \ref{lem:tracetoeplitz}, \ref{lem:hankel}, \ref{lem:tracegeneralT}, \ref{lem:hankel_g}) for the traces of monomials of our matrices.

\subsection{Joint  convergence of copies of \texorpdfstring{$T_n$}{tn}} \label{subsec:jc_T}
%The following proposition provides the joint convergence of random Toeplitz matrices with complex entries.
\begin{proposition} \label{thm:toeplitz}
	%Let $\rho_2,\rho_3,\rho_4,\rho_5,\in [0,1]$. 
	If $\{T_{n}^{(i)}; 1\leq i\leq m\}$ are $m$ independent copies of random Toeplitz matrices 
	%of the form $T_n$ given in (a), 
	whose input entries satisfy Assumption \ref{assump:toeI}, then $\{n^{-1/2}T_{n}^{(i)}; 1 \leq i \leq m\}$ converge jointly. 
	Moreover the limit $*$-moments are as follows. For $\e_1,\ldots, \e_{p}\in \{1,*\}$ and $\tau_1,\ldots, \tau_{p}\in \{1,\ldots, m\}$, 
	\begin{align} \label{eqn:lim_phi(T*tau)}
	&\lim_{n\to \infty} \vp_n(\dfrac{T_n^{(\tau_1)\e_1}}{n^{1/2}} \cdots \dfrac{T_n^{(\tau_{p})\e_{p}}}{n^{1/2}}) \nonumber \\ 
		&=\l\{\begin{array}{lll}
			\displaystyle \hskip-7pt \sum_{\pi \in {\mathcal P}_2(2k)} \prod_{(r,s)\in \pi} \hskip-5pt \d_{\tau_r,\tau_s} \theta(r,s) \hskip-2pt \int\limits_{0}^1 \hskip-2pt \int\limits_{[-1,1]^k} \prod_{\ell=1}^{2k}\chi_{[0,1]}\big(z_0+\sum_{t=\ell}^{2k}\e_{\pi}(t)z_{\pi'(t)}\big)\prod_{i=0}^kdz_i & \hskip-5pt \mbox{if } p=2k, \\
			0 & \hskip-9pt \mbox{if } p=2k+1,
		\end{array}\r.
	\end{align}
	where $\pi'$,  $\e'$  and $\e_{\pi}$ are as defined in \eqref{eqn:pi'}, \eqref{eqn:epsilon'} and \eqref{eqn:epsilon_pi}, respectively, and 
	%\red{as we have assumed $\E|a_j|^2=1$, $\theta(r,s)$ will change, please check with the previous file}
	\begin{align*} %\label{eqn:theta_1}
		\theta(r,s) = \big[\sigma_x^2 + \sigma_y^2\big]^{1-\d_{\e'_{r}, \e'_{s} }} \big[(\rho_2-\rho_5) + \mathrm{i} \e'_r(\rho_3+\rho_4)\big]^{\d_{\e'_r,\e'_s}}.  
		%\big[\E(x_1)^2 +\E(y_1)^2 \big]^{1-\d_{\e'_r\e'_s}}
	\end{align*}
	\noindent In particular, for $m=1$,
 \begin{align}  \label{eqn:lim_mome_A1_TJc}
		&\lim_{n\to \infty}\vp_n(\dfrac{T_n^{\e_1}}{n^{1/2}}\cdots \dfrac{T_n^{\e_{p}}}{n^{1/2}}) \nonumber \\
		&=\l\{\begin{array}{lll}
			\displaystyle \sum_{\pi \in {\mathcal P}_2(2k)} \prod_{(r,s)\in \pi} \hspace{-0.1in}\theta(r,s) \hspace{-0.05in}\int_{0}^{1}\hspace{-0.05in}\int_{[-1,1]^{k}}\prod_{\ell=1}^{2k}\chi_{[0,1]}\big(z_0+\sum_{t=\ell}^{2k}\e_{\pi}(t) z_{\pi'(t)}\big)\prod_{i=0}^k dz_i & \mbox{if } p=2k, \\
			0 &  \hskip-9pt \mbox{if } p=2k+1,
		\end{array}\r.
	\end{align}
 where $\theta(r,s)$ is as in (\ref{eqn:lim_phi(T*tau)}).

 \noindent Moreover if $\{a_j\}$ is real-valued,
	then 
	$\theta(r,s)= (\sigma_x^2)^{1-\d_{\e'_{r}, \e'_{s} }} \rho_{2}^{\d_{\e'_r,\e'_s}}$.
	%with $\rho_2,\rho_3,\rho_4,\rho_5$ are as in Assumption I.
	%The limit $*$-moments of odd order are  $0$.
	%\vskip3pt
	%\vskip3pt
\end{proposition}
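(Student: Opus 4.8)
The plan is to run the moment (trace) method. Writing $\vp_n(\tfrac{T_n^{(\tau_1)\e_1}}{n^{1/2}}\cdots\tfrac{T_n^{(\tau_p)\e_p}}{n^{1/2}}) = n^{-1-p/2}\,\E[\Tr(T_n^{(\tau_1)\e_1}\cdots T_n^{(\tau_p)\e_p})]$, I would expand the trace over index tuples $(i_1,\dots,i_p)\in[n]^p$ with the cyclic convention $i_{p+1}=i_1$. At step $\ell$ the matrix entry is $a^{(\tau_\ell)}_{i_\ell-i_{\ell+1}}$ when $\e_\ell=1$ and $\overline{a^{(\tau_\ell)}_{i_{\ell+1}-i_\ell}}$ when $\e_\ell=*$; in both cases the subscript equals $\e'_\ell w_\ell$ with $w_\ell:=i_\ell-i_{\ell+1}$, and the factor is conjugated exactly when $\e_\ell=*$. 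I would record this bookkeeping through the trace expansion set up in Lemma \ref{lem:tracetoeplitz}.

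Next I would take expectations. Since the groups $(x_j,x_{-j},y_j,y_{-j})$ are independent across $j$ and the copies $\tau$ are independent, the expectation factorizes over the partition $\pi$ of $[p]$ that clusters steps sharing both the same copy and the same value of $|w_\ell|$. Because every input is centered, any $\pi$ with a singleton block contributes zero, so I may restrict to blocks of size $\ge 2$. A degrees-of-freedom count then isolates the survivors: within a block all the $|w_\ell|$ coincide, so each block carries at most one free slope magnitude, giving at most $r+1$ free index choices (the $r$ block-slopes plus $i_1$), hence at most $n^{r+1}$ tuples. As $p=\sum|V_i|\ge 2r$, after the normalization $n^{-1-p/2}$ each partition contributes $O(n^{r-p/2})$, which is $O(1)$ only when $r=p/2$, i.e. $\pi$ is a pair partition; in particular everything vanishes when $p=2k+1$ is odd, giving the second case. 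The uniform moment bound in Assumption \ref{assump:toeI}(iii) keeps every expectation factor bounded, which is what legitimizes discarding the blocks of size $\ge 3$.

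For $p=2k$ and a fixed pair partition $\pi=\{(r,s)\}$, I would show that the leading contribution forces $w_r=-w_s$ for every pair. This is the crucial point: with $w_r=-w_s$ each pair contributes $0$ to $\sum_\ell w_\ell$, so the cyclic relation $\sum_\ell w_\ell=0$ holds automatically and the full $n^{k+1}$ degrees of freedom are realized; the complementary choice $w_r=+w_s$ turns the cyclic relation into a nontrivial linear constraint, loses one degree of freedom, and contributes $O(n^{-1})$. Under $w_r=-w_s$ the subscripts satisfy $\mathrm{sub}_s=-\mathrm{sub}_r$ when $\e'_r=\e'_s$ and $\mathrm{sub}_s=\mathrm{sub}_r$ when $\e'_r\ne\e'_s$; evaluating the second moment of $a=x+\mathrm{i}y$ in these two cases gives $\E[a_j a_{-j}]=(\rho_2-\rho_5)+\mathrm{i}(\rho_3+\rho_4)$ (and its conjugate when both factors are starred, i.e. the factor $\mathrm{i}\e'_r(\rho_3+\rho_4)$) and $\E[|a_j|^2]=\sigma_x^2+\sigma_y^2$ respectively, which is exactly $\theta(r,s)$. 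Cross-copy independence produces the factor $\d_{\tau_r,\tau_s}$.

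Finally I would convert the surviving sum into the stated integral. Setting $z_0=i_1/n$ and assigning to the pair indexed by $\pi'(\ell)$ the scaled free slope $z_{\pi'(\ell)}\in[-1,1]$ (with the sign bookkeeping carried by $\e_\pi$ as in \eqref{eqn:epsilon_pi}), the running index satisfies $i_\ell/n=z_0+\sum_{t\ge\ell}\e_\pi(t)z_{\pi'(t)}+o(1)$, so the admissibility constraint $1\le i_\ell\le n$ becomes the indicator $\chi_{[0,1]}$ of that expression. Summing over admissible tuples and dividing by $n^{k+1}$ gives a Riemann sum for $\int_0^1\!\int_{[-1,1]^k}\prod_{\ell=1}^{2k}\chi_{[0,1]}(z_0+\sum_{t=\ell}^{2k}\e_\pi(t)z_{\pi'(t)})\prod_{i=0}^k dz_i$; since the integrand is bounded and its discontinuity set is a finite union of hyperplanes (hence Lebesgue-null), the sums converge to this integral. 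Assembling the $\theta(r,s)$ and $\d_{\tau_r,\tau_s}$ factors and summing over $\pi\in\mathcal{P}_2(2k)$ yields \eqref{eqn:lim_phi(T*tau)}; taking $m=1$ drops the $\d_{\tau_r,\tau_s}$'s to give \eqref{eqn:lim_mome_A1_TJc}, while $y_j\equiv0$ forces $\sigma_y=\rho_1=\rho_3=\rho_4=\rho_5=\rho_6=0$ so that $\theta(r,s)=(\sigma_x^2)^{1-\d_{\e'_r,\e'_s}}\rho_2^{\d_{\e'_r,\e'_s}}$. The main obstacle is the slope/sign bookkeeping of the third step: verifying rigorously that only the $w_r=-w_s$ matchings survive and that the induced second moments reproduce $\theta(r,s)$ with the correct dependence on the conjugation pattern $(\e_r,\e_s)$.
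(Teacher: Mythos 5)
Your proposal is correct and follows essentially the same route as the paper's proof: the trace expansion of Lemma \ref{lem:tracetoeplitz}, reduction to pair partitions via the uniform moment bounds and a degrees-of-freedom count, identification of the surviving matchings $\e'_r i_r+\e'_s i_s=0$ (your $w_s=-w_r$) with the covariances $\theta(r,s)$, and a Riemann-sum passage to the limiting integral with $\d_{\tau_r,\tau_s}$ coming from cross-copy independence. Your increment parametrization $w_\ell=i_\ell-i_{\ell+1}$ is merely equivalent bookkeeping for the paper's shift-matrix expansion (it even produces $\e_\pi$ directly, where the paper routes through $\xi_\pi$ and a change of variables), so there is no substantive difference.
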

The following corollaries follow from the above proposition.
%\begin{corollary}
%		If $T_{n}$ is a random Toeplitz matrix
	%of the form $T_n$ given in (a), 
%	whose input entries satisfy Assumption \ref{assump:toeI}, then for $\e_1,\ldots, \e_{p}\in \{1,*\}$,	
%\end{corollary}

\begin{corollary} \label{cor:A2+3_TJc}
	Suppose $\{T_{n,h}^{(i)}; 1\leq i\leq m\}$ are $m$ independent copies of random Hermitian Toeplitz matrices 
	%of the form $T_n$ given in (a), 
	whose input entries satisfy Assumption \ref{assump:toeI}.
	Then $\{n^{-1/2}T^{(i)}_{n,h}; 1 \leq i \leq m\}$ 
	%with input sequence $\{\frac{a_{j}}{\sqrt n}, j \in \Z\}$.
	converge in $*$-distribution, with the limit $*$-moments as in (\ref{eqn:lim_phi(T*tau)}) with $\theta(r,s)=(\sigma_x^2 + \sigma_y^2)$. 
\end{corollary}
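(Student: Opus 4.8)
The plan is to deduce the corollary directly from Proposition \ref{thm:toeplitz}, by viewing a random Hermitian Toeplitz matrix as an ordinary random Toeplitz matrix whose input sequence carries an extra symmetry constraint, and then specializing the correlation parameters accordingly. First I would translate the Hermitian condition $a_{-k} = \overline{a_k}$ into constraints on the real and imaginary parts: writing $a_k = x_k + \mathrm{i} y_k$, it is equivalent to $x_{-k} = x_k$ and $y_{-k} = -y_k$ for every $k$. I would then confirm that $T_{n,h}$ still satisfies Assumption \ref{assump:toeI}, so that Proposition \ref{thm:toeplitz} applies: independence across $j$ is unaffected, since the symmetry relates only the two indices $j$ and $-j$ that already sit inside a single tuple; the mean-zero property persists because $\E(a_{-k}) = \overline{\E(a_k)} = 0$; and the uniform moment bounds in Assumption \ref{assump:toeI}(iii) transfer verbatim, as $|x_{-k}| = |x_k|$ and $|y_{-k}| = |y_k|$.

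\textbf{Key step.} The crux is to evaluate the six correlations under these relations. I would compute $\rho_2 = \E[x_j x_{-j}] = \E[x_j^2] = \sigma_x^2$, $\rho_5 = \E[y_j y_{-j}] = -\E[y_j^2] = -\sigma_y^2$, $\rho_3 = \E[x_j y_{-j}] = -\E[x_j y_j] = -\rho_1$, and $\rho_4 = \E[x_{-j} y_j] = \E[x_j y_j] = \rho_1$, so that
$$\rho_2 - \rho_5 = \sigma_x^2 + \sigma_y^2 \qquad \text{and} \qquad \rho_3 + \rho_4 = 0.$$
Feeding these into the expression for $\theta(r,s)$ in Proposition \ref{thm:toeplitz} makes the bracketed factor $(\rho_2 - \rho_5) + \mathrm{i}\,\e'_r(\rho_3 + \rho_4)$ collapse to $\sigma_x^2 + \sigma_y^2$, so that for every pair of signs $\e'_r, \e'_s$,
$$\theta(r,s) = (\sigma_x^2 + \sigma_y^2)^{1 - \d_{\e'_r,\e'_s}}(\sigma_x^2 + \sigma_y^2)^{\d_{\e'_r,\e'_s}} = \sigma_x^2 + \sigma_y^2,$$
which is exactly the claimed value. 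Joint convergence of the $m$ independent copies is then immediate from Proposition \ref{thm:toeplitz}, and the limiting $*$-moments are those of \eqref{eqn:lim_phi(T*tau)} with this constant $\theta$.

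\textbf{Main obstacle.} Since all of the analytic and combinatorial work already resides in Proposition \ref{thm:toeplitz}, I expect no substantive obstacle. The one point deserving a little care is checking that the deterministic coupling $x_{-k} = x_k$, $y_{-k} = -y_k$ — which makes the joint law of the pair $(a_k, a_{-k})$ degenerate — remains admissible under Assumption \ref{assump:toeI}; it does, because that assumption constrains only second moments and uniform moment bounds, both of which are preserved, while the leading-order combinatorics of Proposition \ref{thm:toeplitz} depends on the entry law solely through these second moments. As a consistency check supporting the computation, the vanishing of $\rho_3 + \rho_4$ renders every $\theta(r,s)$ real, matching the fact that $\vp_n(T_{n,h}^{\e_1}\cdots T_{n,h}^{\e_p}) = n^{-1}\E\,\Tr(T_{n,h}^{p})$ is real; indeed, since $T_{n,h}^* = T_{n,h}$, the value of these $*$-moments does not depend on the choice of exponents $\e_1, \dots, \e_p$.
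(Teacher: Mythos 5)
Your proposal is correct and follows essentially the same route as the paper: both deduce the corollary from Proposition \ref{thm:toeplitz} by translating $a_{-j}=\bar{a}_j$ into $x_{-j}=x_j$, $y_{-j}=-y_j$, reading off $\rho_2=\sigma_x^2$, $\rho_5=-\sigma_y^2$, $\rho_3+\rho_4=0$, and substituting into $\theta(r,s)$ to get $\sigma_x^2+\sigma_y^2$ regardless of $\d_{\e'_r,\e'_s}$. Your extra verifications (that the Hermitian coupling remains admissible under Assumption \ref{assump:toeI}, and the realness consistency check) are sound but not needed beyond what the paper records.
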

Specializing further, if the input entries are real-valued, then for $m$ independent copies of random symmetric Toeplitz matrices, $\{n^{-1/2}T^{(i)}_{n,s}; 1 \leq i \leq m\}$, the limit $*$-moments are as in (\ref{eqn:lim_phi(T*tau)}) with $\theta(r,s)=\sigma_x^2$. This is  
Proposition 1 of \cite{bose_saha_patter_JC_annals}.

\begin{corollary} \label{cor:iid_jc_T}
Let $T_{n}^{(1)},\ldots, T_{n}^{(m)}$ be independent Toeplitz matrices whose input  entries are i.i.d complex random variables with mean zero, variance one and all moments finite. Then $\{n^{-1/2}T_{n}^{(1)},\ldots, n^{-1/2}T_{n}^{(m)}\}$ converge in $*$-distribution, with the limit $*$-moments as in (\ref{eqn:lim_phi(T*tau)}) with $\theta(r,s)=\big[(\rho_2-\rho_5) + \mathrm{i} \e'_r(\rho_3+\rho_4)\big]^{\d_{\e'_r,\e'_s}}$.
\end{corollary}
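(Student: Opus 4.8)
The plan is to obtain this statement as an immediate specialization of Proposition \ref{thm:toeplitz}, so that no fresh combinatorics is required; the only task is to read off the value of $\theta(r,s)$ under the i.i.d.\ hypotheses. First I would check that an i.i.d.\ complex sequence $\{a_j\}$ with $\E(a_j)=0$, $\E|a_j|^2=1$ and $\E|a_j|^k<\infty$ for every $k\geq 1$ satisfies Assumption \ref{assump:toeI}; this is exactly the remark recorded immediately after that assumption. Indeed, independence across $j$ and the uniform moment bounds are built into the i.i.d.\ hypothesis, while finiteness of the second moment guarantees that the constants $\sigma_x^2,\sigma_y^2,\rho_1,\dots,\rho_6$ are well defined and independent of $j$. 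Consequently Proposition \ref{thm:toeplitz} applies directly and yields both the joint $*$-convergence of $\{n^{-1/2}T_n^{(i)};1\leq i\leq m\}$ and the limiting $*$-moment formula \eqref{eqn:lim_phi(T*tau)}, with the general weight $\theta(r,s)=\big[\sigma_x^2+\sigma_y^2\big]^{1-\d_{\e'_r,\e'_s}}\big[(\rho_2-\rho_5)+\mathrm{i}\,\e'_r(\rho_3+\rho_4)\big]^{\d_{\e'_r,\e'_s}}$.

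The single computation that remains is the evaluation of the prefactor $\big[\sigma_x^2+\sigma_y^2\big]^{1-\d_{\e'_r,\e'_s}}$. I would use that ``variance one'' for a complex entry means $\E|a_j|^2=\E(x_j^2)+\E(y_j^2)=\sigma_x^2+\sigma_y^2=1$. Hence $\big[\sigma_x^2+\sigma_y^2\big]^{1-\d_{\e'_r,\e'_s}}=1^{1-\d_{\e'_r,\e'_s}}=1$ irrespective of whether $\e'_r=\e'_s$, so $\theta(r,s)$ collapses to $\big[(\rho_2-\rho_5)+\mathrm{i}\,\e'_r(\rho_3+\rho_4)\big]^{\d_{\e'_r,\e'_s}}$, precisely the asserted form (the convention $0^0=1$ fixed in the paper covers the case $\d_{\e'_r,\e'_s}=0$). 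Substituting this weight into \eqref{eqn:lim_phi(T*tau)} finishes the argument.

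I do not expect a genuine obstacle here, since all the analytic and enumerative content already lives in Proposition \ref{thm:toeplitz}; the only point demanding a little care is interpreting ``variance one'' as the normalization $\sigma_x^2+\sigma_y^2=1$ rather than $\sigma_x^2=\sigma_y^2=1$, which is exactly what trivializes the first factor. As a complementary observation I would note that when $a_j$ and $a_{-j}$ are truly independent the within-pair correlations $\rho_2,\rho_3,\rho_4,\rho_5$ all vanish, so $\theta(r,s)$ reduces still further to $1$ for $\e'_r\neq\e'_s$ and to $0$ for $\e'_r=\e'_s$; the displayed form of the corollary deliberately keeps these coefficients explicit so as to isolate the role of the variance-one normalization.
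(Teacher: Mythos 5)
Your proposal is correct and matches the paper's (implicit) argument exactly: the paper offers no separate proof of this corollary, relying on the remark after Assumption \ref{assump:toeI} that i.i.d.\ entries satisfy it, and on the normalization $\E|a_j|^2=\sigma_x^2+\sigma_y^2=1$ which kills the first factor of $\theta(r,s)$ in (\ref{eqn:lim_phi(T*tau)}), just as you compute. Your closing observation\textemdash that genuine independence of $a_j$ and $a_{-j}$ would force $\rho_2=\rho_3=\rho_4=\rho_5=0$, so the displayed $\theta(r,s)$ is kept general on purpose\textemdash is also accurate and consistent with the paper's reading of the hypothesis.
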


\begin{corollary}\label{cor:poly_Lsd_T} 
	Suppose $T_{n}^{(1)},\ldots, T_{n}^{(m)}$ are independent Toeplitz matrices whose input entries satisfy Assumption \ref{assump:toeI},  
	and $Q_n=Q(n^{-1/2} T_{n}^{(1)},\ldots, n^{-1/2}T_{n}^{(m)})$ is a self-adjoint polynomial in these matrices  and their adjoints. Then, the Expected ESD of $Q_n$ converges weakly, the ESD converges to the same limit a.s., and the moments of the LSD are bounded by the moments of a Gaussian distribution. 
	\vskip3pt
	
	\noindent In particular, 
	%we have the following result \cite[Theorem 2.2]{liu-wang}: 
	%Let $T_{n,h}$ be the $n \times n$ Hermitian Toeplitz with input sequence 
	if $\{a_{j}\}$ satisfy Assumption \ref{assump:toeI}, then the ESD of $n^{-1/2}T_{n,h}$ and $n^{-1/2}T_{n,s}$ converge a.s.~to  symmetric probability distributions 
	whose moments are as in (\ref{eqn:lim_mome_A1_TJc}) with $\e_1=\cdots = \e_{2k}=1$, and $\theta(r,s)=(\sigma_x^2 + \sigma_y^2)$ and $\theta(r,s)=\sigma_x^2$, respectively.
\end{corollary}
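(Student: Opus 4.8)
The plan is to prove the statement by the method of moments, leveraging the joint convergence already established in Proposition \ref{thm:toeplitz}. Since $Q$ is a self-adjoint polynomial, $Q_n = Q(n^{-1/2}T_n^{(1)},\ldots,n^{-1/2}T_n^{(m)})$ is Hermitian for each $n$, so its eigenvalues are real and its ESD $F^{Q_n}$ is a genuine probability measure on $\R$ whose $h$-th moment equals $\varphi_n(Q_n^h)=\frac1n\E[\Tr(Q_n^h)]$. Expanding $Q_n^h$ as a finite linear combination of monomials in the matrices $n^{-1/2}T_n^{(i)}$ and their adjoints, Proposition \ref{thm:toeplitz} (the joint $*$-convergence together with the explicit mixed-moment formula \eqref{eqn:lim_phi(T*tau)}) shows that each such mixed moment converges, and hence $\varphi_n(Q_n^h)\to\beta_h:=\varphi(q^h)$, where $q=Q(a_1,\ldots,a_m)$ is the corresponding self-adjoint element of the limiting $*$-probability space. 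This already gives convergence of every moment of the EESD.

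Next I would show that the limiting moment sequence $(\beta_h)$ uniquely determines a probability measure, which is the step that turns moment convergence into weak convergence of the EESD. The idea is to produce a Gaussian-type bound on $\beta_{2k}$. Writing $Q$ with $N$ monomial terms of degree at most $d$, each contribution to $\varphi_n(Q_n^{2k})$ is a $*$-moment of length at most $2dk$, which by the trace formula (Lemma \ref{lem:tracetoeplitz}) is governed by pair partitions: it is bounded by the number of such pair partitions times a product of the uniformly bounded factors $\theta(r,s)$ times the multiple integrals appearing in \eqref{eqn:lim_phi(T*tau)}, the latter being bounded by a constant to the power of the word length because their integrands are products of indicator functions. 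Collecting these estimates yields a bound of the form $\beta_{2k}\le C^{k}\,(2k-1)!!$ in the degree-one situation, precisely the $2k$-th moment of a centred Gaussian, so Carleman's condition $\sum_k \beta_{2k}^{-1/(2k)}=\infty$ holds and the limit is unique. I expect the main obstacle to be making this growth estimate work uniformly for a general self-adjoint polynomial: the naive count of pair partitions of $[2dk]$ grows like $(2dk-1)!!$, and one must argue carefully, controlling which pair partitions actually contribute and how the integrals decay, to keep $\beta_{2k}$ within the range where the determinacy criterion still applies. Note that \eqref{eqn:lim_mome_A1_TJc} gives $\beta_h=0$ for odd $h$, so the limiting measures are symmetric about the origin.

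To upgrade convergence of the EESD to almost sure convergence of the ESD, I would establish a variance bound $\Var\big(\tfrac1n\Tr(Q_n^{h})\big)=O(n^{-2})$. This is the usual two-circuit estimate: the variance is a sum over pairs of circuits, and only jointly pair-matched pairs survive, contributing at order $n^{-2}$, with Assumption \ref{assump:toeI}(iii) supplying the uniform moment bounds needed to control the expectations. Summability of $n^{-2}$ and the Borel--Cantelli lemma then give $\tfrac1n\Tr(Q_n^{h})\to\beta_h$ almost surely for every $h$; combined with the determinacy established above, the ESD of $Q_n$ converges weakly to the same LSD almost surely.

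Finally, the ``in particular'' assertions follow by specialization. Both $n^{-1/2}T_{n,h}$ and $n^{-1/2}T_{n,s}$ are themselves self-adjoint, i.e.\ the degree-one polynomial $Q(A)=A$ with all $\e_i=1$, so their limiting moments are read off from \eqref{eqn:lim_mome_A1_TJc} using $\theta(r,s)=\sigma_x^2+\sigma_y^2$ for the Hermitian case (as in Corollary \ref{cor:A2+3_TJc}) and $\theta(r,s)=\sigma_x^2$ for the real symmetric case. Here the Gaussian bound is immediate, so determinacy and the a.s.\ convergence of the ESD follow from the general argument, and the vanishing of the odd moments makes both limits symmetric.
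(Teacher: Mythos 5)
Your outline matches the paper's proof in its first and last steps (moment convergence of the EESD via Proposition \ref{thm:toeplitz}, a Gaussian bound plus Carleman for determinacy, Borel--Cantelli for the upgrade to a.s.\ convergence, and reading off the Hermitian/symmetric cases from \eqref{eqn:lim_mome_A1_TJc}), but the concentration step contains a genuine error: the claimed bound $\Var\big(n^{-1}\Tr(Q_n^{h})\big)=O(n^{-2})$ is false for Toeplitz-patterned matrices. Already for $h=1$ and $Q(A)=A$ with a real symmetric input, $n^{-1}\Tr\big(n^{-1/2}T_{n,s}\big)=a_0/\sqrt{n}$, whose variance is $\sigma_x^2/n$, i.e.\ exactly order $n^{-1}$. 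The heuristic you invoke --- ``only jointly pair-matched pairs of circuits survive, contributing at order $n^{-2}$'' --- is the Wigner count; a Toeplitz matrix has only $O(n)$ independent inputs, each appearing in $O(n)$ entries, so two circuits sharing a single cross-matched pair contribute at order $n^{-1}$. Indeed the fluctuation results for Toeplitz matrices (see \cite{liu2012fluctuations}, cited in the paper) have $\Var\big(\Tr(A_n^{h})\big)\asymp n$, consistent with $\Var\big(n^{-1}\Tr\big)\asymp n^{-1}$. Since $n^{-1}$ is not summable, Chebyshev at second order plus Borel--Cantelli does not close. The paper avoids this precisely by estimating the \emph{fourth} central moment, $\E\big[n^{-1}\Tr(Q_{n}^{k})-\E[n^{-1}\Tr(Q_n^{k})]\big]^4=O(n^{-2})$ as in \eqref{eq:m4}, following Lemma 1.4.3 of \cite{bose_patterned}; this rate is summable, and Markov at fourth order plus Borel--Cantelli then gives the a.s.\ convergence. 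Your argument is repaired by substituting this fourth-moment bound for your variance bound; as written, the step fails.

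On the determinacy step, your hesitation is well placed but does not put you behind the paper: for a degree-$d$ polynomial the naive pair-partition count gives $\beta_{2k}\lesssim C^{k}(2dk-1)!!$, and for $d\geq 3$ this exceeds the moments of any fixed Gaussian and Carleman's condition is no longer automatic from that bound alone. The paper dispatches this with ``it is easy to see,'' without exhibiting the finer cancellation; you flag the same obstacle explicitly without resolving it, so on this point the two arguments are at the same level of completeness. The specializations to $n^{-1/2}T_{n,h}$ and $n^{-1/2}T_{n,s}$ in your last paragraph are degree one, where the Gaussian bound and hence determinacy are immediate, matching the paper.
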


Now we proceed for the proof of Propositions \ref{thm:toeplitz}. First we start with  the following notation which will be used throughout the paper: 
\begin{align} \label{eqn:i_k in -n to n}
%\chi_{[1,n]}(x)&=1\;\;\mbox{if $x\in [1,n]$  and zero otherwise}.\\
I_k&=\{(i_1,\ldots, i_k)\;:\; i_1,\ldots , i_k\in \{-(n-1),\ldots, -1, 0, 1,\ldots, (n-1)\}\}.
%\\\mathcal P_2(2k)&= \mbox{the set of pair partition of $\{1,\ldots, 2k\}$}.
\end{align}
Liu and Wang \cite{liu_wang2011} studied the convergence of the ESD of real band Toeplitz matrices using backward and forward shift matrices. We follow this approach since it helps us to compute the trace of matrices in a systematic manner. 

\begin{lemma}\label{lem:tracetoeplitz}
(a) Let $M_n=((a_{i-j}))_{n\times n}$ be an $n\times n$ Toeplitz matrix (random or non-random) with \textit{complex} input entries. Then for $\e_1,\ldots, \e_k\in \{1,*\}$,
\begin{align*}
\Tr(M_n^{\e_1}\cdots M_n^{\e_k})=\sum_{j=1}^n\sum_{I_k} a^{\epsilon_1}_{i_1}\cdots a^{\epsilon_k}_{i_k}\prod_{\ell=1}^k
\chi_{[1,n]}(j+\sum_{t=\ell}^k\e_t'i_t)\d_{0,\sum_{t=1}^k\e_t'i_t},
\end{align*}
where $I_k$ is as in (\ref{eqn:i_k in -n to n}), $\e_i'=1$ if $\e_i=1$ and $\e_i'=-1$ if $\e_i=*$.
\vskip3pt
\noindent (b) 
 If  $M_n^{(\tau)}=((a_{i-j}^{(\tau)}))_{n\times n}$  for $\tau=1,\ldots, m$, then for $\tau_1,\ldots, \tau_k\in \{1,\ldots, m\}$,
 \begin{align*}
 \Tr(M_n^{(\tau_1)\e_1}\cdots M_n^{(\tau_{k})\e_{k}})
 =\sum_{j=1}^n\sum_{I_{k}} a_{i_1}^{(\tau_1) \e_1}\cdots a_{i_k}^{(\tau_k) \e_k}\prod_{\ell=1}^{k}
 \chi_{[1,n]}(j+\sum_{t=\ell}^{k}\e_t'i_t)\d_{0,\sum_{t=1}^k \e_t'i_t}.
 \end{align*} 
\end{lemma}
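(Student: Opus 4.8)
The plan is to expand the trace as a sum over closed index chains and then re-parametrise the summation by the \emph{labels} $i_t$ of the matrix entries, converting the cyclic structure into a single free index $j$ together with one linear constraint. First I would use, for arbitrary $n\times n$ matrices, the standard expansion $\Tr(B^{(1)}\cdots B^{(k)})=\sum_{j_1,\ldots,j_k=1}^n (B^{(1)})_{j_1,j_2}(B^{(2)})_{j_2,j_3}\cdots (B^{(k)})_{j_k,j_1}$ with the cyclic convention $j_{k+1}=j_1$, applied to $B^{(t)}=M_n^{\epsilon_t}$.

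The crux is to read off each factor. Since $(M_n)_{p,q}=a_{p-q}$ and $(M_n^{*})_{p,q}=a_{q-p}^{*}$, I would define $i_t:=j_t-j_{t+1}$ when $\epsilon_t=1$ and $i_t:=j_{t+1}-j_t$ when $\epsilon_t=*$. With this choice each factor is exactly $a_{i_t}^{\epsilon_t}$, and in both cases one obtains the single clean identity $\epsilon_t' i_t = j_t-j_{t+1}$, where $\epsilon_t'$ is as in \eqref{eqn:epsilon'}; equivalently the chain obeys the uniform recursion $j_{t+1}=j_t-\epsilon_t' i_t$. Obtaining this uniform description across the two values of $\epsilon_t$---in particular handling the conjugate-transpose entries correctly so that the sign is absorbed into $\epsilon_t'$---is the one genuinely delicate point; everything downstream is bookkeeping.

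Next I would pass from $(j_1,\ldots,j_k)$ to $(j,i_1,\ldots,i_k)$ with $j:=j_1$. Summing the identity $\epsilon_t' i_t=j_t-j_{t+1}$ over $t$ telescopes to $\sum_{t=1}^k \epsilon_t' i_t = j_1-j_{k+1}=0$, which is just the cyclic closure of the trace; writing this as a free summation over the last label produces the factor $\delta_{0,\sum_{t=1}^k \epsilon_t' i_t}$. On the support of this delta I would use $\sum_{t=1}^{\ell-1}\epsilon_t' i_t = -\sum_{t=\ell}^{k}\epsilon_t' i_t$ to get $j_{\ell}=j+\sum_{t=\ell}^{k}\epsilon_t' i_t$, so that the admissibility constraints $1\le j_\ell\le n$ for $\ell=1,\ldots,k$ become precisely $\prod_{\ell=1}^k \chi_{[1,n]}\big(j+\sum_{t=\ell}^{k}\epsilon_t' i_t\big)$ (the $\ell=1$ factor being automatic on the delta support).

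Finally I would fix the summation ranges. Because $j_t,j_{t+1}\in\{1,\ldots,n\}$, each label satisfies $i_t\in\{-(n-1),\ldots,n-1\}$; conversely, letting every $i_t$ run over the full box $I_k$ of \eqref{eqn:i_k in -n to n} adds only configurations that violate some range constraint and are hence annihilated by the $\chi_{[1,n]}$ factors. Summing $j$ over $\{1,\ldots,n\}$ and the $i_t$ over $I_k$ then yields the formula in (a). For (b) the computation is identical: the only change is that the $t$-th factor is an entry of $M_n^{(\tau_t)}$ rather than of $M_n$, and since all these matrices share the same Toeplitz index geometry, this factor becomes $a_{i_t}^{(\tau_t)\epsilon_t}$ while the recursion, the closure constraint, and the $\chi$-product are unchanged.
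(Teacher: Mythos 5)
Your proof is correct, and all conventions check out against the paper's statement: with $(M_n)_{p,q}=a_{p-q}$ and $(M_n^*)_{p,q}=a^*_{q-p}$, your definition of $i_t$ does give the uniform identity $\e_t'i_t=j_t-j_{t+1}$ for both values of $\e_t$; telescoping over the cycle yields the factor $\d_{0,\sum_{t=1}^k\e_t'i_t}$, on whose support $j_\ell=j+\sum_{t=\ell}^{k}\e_t'i_t$ reproduces exactly the arguments of the $\chi_{[1,n]}$ factors (with the $\ell=1$ factor redundant, as you note), and enlarging the $i_t$-summation to the full box $I_k$ is harmless since the indicators annihilate every tuple that does not come from an admissible chain. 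The route differs from the paper's in presentation more than in substance: the paper first decomposes $M_n=\sum_{i=0}^{n-1}a_{-i}B^i+\sum_{i=1}^{n-1}a_iF^i$ into backward and forward shift matrices, derives the one-step action $M_n^{\e}e_j=\sum_i a_i^{\e}\chi_{[1,n]}(j+\e'i)e_{j+\e'i}$, and iterates this on $e_j$ from the right inside $\Tr(A)=\sum_j e_j^tAe_j$, whereas you expand the trace directly over cyclic index chains $(j_1,\ldots,j_k)$ and reparametrize by the diagonal offsets. These are two standard bookkeeping schemes for the same chain of row indices and the same closure constraint, so the proofs are computationally isomorphic; your version is slightly more elementary and self-contained, while the paper's shift-operator formalism is a deliberate investment, since the identity for $M_n^{\e}e_j$ (together with $P_ne_j=e_{n+1-j}$) is reused as the building block in the later trace formulae, Lemmas \ref{lem:hankel}, \ref{lem:tracegeneralT} and \ref{lem:hankel_g}. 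Your remark that part (b) follows by the same computation with the $t$-th factor drawn from $M_n^{(\tau_t)}$ matches the paper, which likewise treats (b) as an immediate repetition of (a).
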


\begin{proof} %[Proof of Lemma \ref{lem:tracetoeplitz}]
For $i=1,\ldots, n$, let $e_i=(0,\ldots, 1,\ldots, 0)^t$, where $1$ is at the $i$-th place. Then 
\begin{align}\label{eqn:1}
\Tr(M_n^{\e_1}\cdots M_n^{\e_k}) =\sum_{j=1}^ne_j^t(M_n^{\e_1}\cdots M_n^{\e_k})e_j.
\end{align}
Suppose
$$B_n=((\d_{i+1,j }))_{n\times n}\ \ \text{and}\ \ F_n=((\d_{i,j+1}))_{n\times n},$$ are respectively the backward and the forward shift matrices. We abbreviate them as $B$ and $F$. Then  
\begin{align*}
M_n=\sum_{i=0}^{n-1}a_{-i}B^i+\sum_{i=1}^{n-1}a_iF^i \ \mbox{ and } \ M_n^*=\sum_{i=0}^{n-1}a^*_{-i}F^i+\sum_{i=1}^{n-1}a^*_iB^i.
\end{align*}
Therefore, for $j=1,\ldots, n,$ we have 
\begin{align}\label{eqn:2}
M_ne_j
=\sum_{i=0}^{n-1}a_{-i}B^ie_j+\sum_{i=1}^{n-1}a_iF^ie_j\notag
&=\sum_{i=0}^{n-1}a_{-i}\chi_{[1,n]}(j-i)e_{j-i}+\sum_{i=1}^{n-1}a_i\chi_{[1,n]}(j+i)e_{j+i}\notag
\\&=\sum_{i=-(n-1)}^{n-1}a_i\chi_{[1,n]}(j+i)e_{j+i}.
\end{align}
Similarly, for $j=1,\ldots, n,$ we have 
\begin{align}\label{eqn:3}
M_n^*e_j=\sum_{i=-(n-1)}^{n-1}a^*_i\chi_{[1,n]}(j-i)e_{j-i}.
\end{align}
Thus, for $\e\in \{1, *\}$, combining  \eqref{eqn:2} and \eqref{eqn:3}, we have
\begin{align*} %\label{eqn:Tepslion_eJ}
M_n^{\epsilon}e_j=\sum_{i=-(n-1)}^{n-1}a^\epsilon_i\chi_{[1,n]}(j+\e' i)e_{j+\e'i},
\end{align*}
where $\e'=1$ if $\e=1$ and $\e'=-1$ if $\e=*$. 

 Using the above ideas, for $j=1,\ldots, n$, we have
\begin{align*}
M_n^{\epsilon_{k-1}}M_n^{\e_k}e_j&= M_n^{\e_{k-1}}\sum_{i_k=-(n-1)}^{n-1} a^{\epsilon_k}_{i_k}\chi_{[1,n]}(j+\e_{i_k}' i_k)e_{j+\e_{i_k}'i_k}
\\&=\sum_{i_k=-(n-1)}^{n-1} a^{\epsilon_k}_{i_k}\chi_{[1,n]}(j+\e_{i_k}' i_k) M_n^{\e_{k-1}}e_{j+\e_{i_k}'i_k}
\\&=\hspace{-5pt}\sum_{i_k, i_{k-1}}\hspace{-5pt} a^{\epsilon_{k-1}}_{i_{k-1}} a^{\epsilon_{k}}_{i_k} \chi_{[1,n]}(j+\e_{i_k}' i_k)\chi_{[1,n]}(j+\e_{i_{k-1}}'i_{k-1}+\e_{i_k}' i_k)e_{j+\e_{i_{k-1}}'i_{k-1}+\e_{i_k}' i_k}.
\end{align*}
Continuing this process, and using \eqref{eqn:1},  we get Part (a). 
%the first part of the result.
%Similarly, we get 
The proof of Part (b) is similar and we skip the details. 
%the second part of the result. We skip the details here.
\end{proof}
Now using the above trace formula, we prove Proposition \ref{thm:toeplitz}.
\begin{proof}[Proof of Proposition \ref{thm:toeplitz}]
For transparency, we first consider the case of a single matrix, that is, we establish (\ref{eqn:lim_mome_A1_TJc}). Let $p$ be a positive integer. By Lemma \ref{lem:tracetoeplitz} we have
\begin{align}\label{eqn:thm1}
\vp_n(\dfrac{T_n^{\e_1}}{n^{1/2}}\cdots \dfrac{T_n^{\e_p}}{n^{1/2}})&=\frac{1}{n^{\frac{p}{2}+1}}\E\Tr[T_n^{\e_1}\cdots T_n^{\e_p}]\notag\\
&=\frac{1}{n^{\frac{p}{2}+1}}\sum_{j=1}^n\sum_{I_p}\E\big[a^{\e_1}_{i_1}\cdots a^{\e_p}_{i_p}\prod_{\ell=1}^p
\chi_{[1,n]}(j+\sum_{t=\ell}^{p}\e_t'i_t)\d_{0,\sum_{t=1}^p\e_t'i_t}\big]\notag
\\&=\frac{1}{n^{\frac{p}{2}}}\sum_{I_p'}\E[a^{\e_1}_{i_1}\cdots a^{\e_p}_{i_p}] \frac{1}{n}\sum_{j=1}^n\prod_{\ell=1}^p
\chi_{[1,n]}(j+\sum_{t=\ell}^p\e_t'i_t),
\end{align}
where  $I_p$ is as in (\ref{eqn:i_k in -n to n}) and
\begin{equation}\label{iprime}
I_p'=\{(i_1,\ldots, i_p)\in I_p\; :\; \sum_{t=1}^p\e_t'i_t=0\}.
\end{equation}
 Observe that, for all $n$, 
\begin{align*}
0 \leq \frac{1}{n}\sum_{j=1}^n\prod_{\ell=1}^p
\chi_{[1,n]}(j+\sum_{t=\ell}^{p}\e_t'i_t)\le 1.
\end{align*} 

Let $\pi$ be any partition of $\{i_1,\ldots, i_p\}$. We write it as $\pi=\{ \pi_1, \pi_2, \ldots, \pi_k\}$ such that the random variables in $\{a_i : i\in \pi_j\}$ are correlated for $1\le j\le k$, but $\{a_i : i\in \pi_u\}$ and $\{a_i : i\in \pi_v\}$ are uncorrelated if $1\le u\neq v\le k$. Let $I_p'(\pi)$ be the set of indices from $I_p'$ that obey this rule. Then, by Assumption \ref{assump:toeI}, we have
\begin{align*}
	\sum_{I_p'} | \E[a^{\e_1}_{i_1}\cdots a^{\e_p}_{i_p}] | =\sum_{\pi\in \mathcal P(p)}\sum_{I_p'(\pi)} \prod_{u=1}^{|\pi|} |\E_{\pi_u}[a^{\e_1}_{i_1}\cdots a^{\e_p}_{i_p}] |
	\leq \sum_{\pi\in \mathcal P(p)} n^{|\pi|} \prod_{u=1}^{|\pi|} 2^{|\pi_u|} \times c_{|\pi_u|},
\end{align*}
where $c_{|\pi_u|}$ are moments as in Assumption \ref{assump:toeI}, $|\pi|$ denotes the number of blocks in $\pi$ and $|\pi_u|$ denotes the cardinality of $\pi_u$. Here $\E_{\pi_u}[a^{\e_1}_{i_1}\cdots a^{\e_p}_{i_p}]=\E[a^{\e_{d_1}}_{i_{d_1}}\cdots a^{\e_{d_u}}_{i_{d_u}}]$ if $\pi_u$ looks like $\{i_{d_1}, \ldots, i_{d_u}\}$. Note that the constant $\prod_{u=1}^{|\pi|} c_{|\pi_u|}$ depends only on $\pi$.
Therefore
$$
\lim_{n \to \infty}\frac{1}{n^{\frac{p}{2}}}\sum_{I_p'}\E[a^{\e_1}_{i_1}\cdots a^{\e_p}_{i_p}]=0, \mbox{ if $|\pi|<\frac{p}{2}$}.$$
Thus, to obtain a non-zero limit of the expression in (\ref{eqn:thm1}), it is enough to consider only  those $\pi$ for which $|\pi|\ge p/2$.
On the other hand, as $\E[a_i]=0$,  $\E[a^{\e_1}_{i_1}\cdots a^{\e_p}_{i_p}]$ non-zero implies $|\pi_i|\ge 2$ for $1\le i\le k$. In that case 
$|\pi|\leq p/2$. 
%Therefore we can restrict to only those $|\pi|\le p/2$. 
Hence,  we are left to consider only those $\pi$ where $|\pi|=p/2$. In other words, $\pi$ is a \textit{pair-partition}. 

\noindent If $p$ is odd, then there 
are no pair-partitions and hence
\begin{equation} \label{eqn:phi_To(1)odd}
	\lim_{n\to \infty}n^{-\frac{p}{2}}\sum_{I_p'}\E[a^{\e_1}_{i_1}\cdots a^{\e_p}_{i_p}]=0.
\end{equation}

Now suppose $p$ is even, say $p=2k$. Then 
\begin{align*}
\vp_n(\dfrac{T_n^{\e_1}}{n^{1/2}}\cdots \dfrac{T_n^{\e_{2k}}}{n^{1/2}})&=\frac{1}{n^{k+1}}\sum_{j=1}^n\sum_{\pi\in \mathcal P_{2}(2k)}\sum_{I_{2k}'(\pi)}\prod_{(r,s)\in \pi}\hspace{-6pt} \E[a^{\e_{r}}_{i_r} a^{\e_{s}}_{i_{s}}]\prod_{\ell=1}^{2k}\hspace{-2pt}
\chi_{[1,n]}(j+\sum_{t=\ell}^{2k}\e_t'i_t)+o(1).
%\\&=\frac{1}{n^{k+1}}\sum_{\pi\in \mathcal P(2k)}\sum_{j=1}^n\sum_{I_{2k}'}\prod_{(r,s)\in \pi}(\d_{i_ri_s}+\rho \d_{i_r(-i_s)})\prod_{\ell=1}^{2k}
%\chi_{[1,n]}(j+\sum_{t=1}^{\ell}\e_t'i_t)+o(1)
\end{align*}
Clearly, if the number of free indices among the indices $\{i_1,\ldots, i_{2k}\}$ is strictly less than $k$, then the limit contribution is zero. 
On the other hand, by the independence property of entries (Assumption \ref{assump:toeI}), we have that $\E[a^{\e_{r}}_{i_r} a^{\e_{s}}_{i_{s}}]$ is non-zero only if, $i_{r}=i_{s}$ or $i_{r}=-i_{s}$.
%\begin{align}\label{eqn:exp}	\E[a^{\e_{r}}_{i_r} a^{\e_{s}}_{i_{s}}]
%	=\l\{\begin{array}{lll}
%		\E[x_{i_r}^2+ y^2_{i_r}] & \mbox{ if}  & i_{r}=i_{s},\\ 
%		(\rho_2-\rho_5) + \mathrm{i}  \e'_r(\rho_3+\rho_4) & \mbox{ if} & i_{r}=-i_{s}.
%	\end{array}\r.
%\E[a^{\e_{r}}_{i_r} a^{\e_{s}}_{i_{s}}]=\l\{\begin{array}{lll}
%1 & \mbox{ if}  & i_{r}=i_{s}\\ \\
%\rho & \mbox{ if} & i_{r}=-i_{s},
%\end{array}\r.
%\end{align}
This implies that the number of free indices is at most $k$. Therefore we focus on the summands where the number of free indices is exactly equal to $k$. 

%\textbf{This para is a bit confusing. Please rephrase as neeeded}. Recalling definition (\ref{iprime}), clearly for a summand to contribute to the limit, amongst the indices $\{i_1,\ldots, i_{2k}\}$, the number of indices that can be freely chosen must be at most $k$ and all the indices must collectively satisfy the inequalities given by 
%the conditions $\chi_{[1,n]}(j+\sum_{t=\ell}^{2k}\e_t'i_t)=1$. 

\noindent In other words, $\{i_{r_1}, i_{s_1}\},\ldots, \{i_{r_k}, i_{s_k}\}$ are disjoint, where
  $\pi=(r_1,s_1)\cdots (r_k,s_k)\in \mathcal P_2(2k)$. % where $r_t<s_t$ for $t=1,\ldots, k$. 
   Again, as
$(i_1,\ldots, i_{2k})\in I_{2k}'(\pi)$, we have   
\begin{align*}%\label{eqn:sum}
0=\sum_{t=1}^{2k}\e_t'i_t=\sum_{t=1}^k(\e_{r_t}'i_{r_t}+\e_{s_t}'i_{s_t}).
\end{align*}
This implies that the number of free indices will be exactly $k$ only when
\begin{align}\label{eqn:reduction}
	(\e_{r_t}'i_{r_t}+\e_{s_t}'i_{s_t})=0, \mbox{ for $t=1,\ldots, k$}.
\end{align}
Otherwise, $\{i_{r_1}, i_{s_1}\},\ldots, \{i_{r_k}, i_{s_k}\}$ will satisfy a one dimensional equation which will reduce one degree of freedom. 
 Since  $\e_1',\ldots, \e_{2k}'\in \{1,-1\}$, the above holds if and only if
\begin{align*}
i_{r_t}=
\l\{\begin{array}{lll}
i_{s_t} & \mbox{ if } & \e_{r_t}'\e_{s_t}'=-1,\\ 
-i_{s_t} & \mbox{ if} & \e_{r_t}'\e_{s_t}'=1,
\end{array}
\r.
\end{align*}
for $t=1,\ldots, k$. In other words, we have a non-zero contribution in the limit iff
\begin{align}\label{eqn:E[ars_epsilon]}
	\E[a^{\e_{r_{t}}}_{i_{r_{t}}} a^{\e_{s_{t}}}_{i_{s_{t}}}] &= 
	\l\{\begin{array}{lll}
		\E|a_{i_{r_{t}}}|^2 & \mbox{ if}  & \e_{r_t}'\e_{s_t}'=-1,\\ 
		(\rho_2-\rho_5) + \mathrm{i} \e'_{r_t}(\rho_3+\rho_4) & \mbox{ if} & \e_{r_t}'\e_{s_t}'=1,
	\end{array}\r. \nonumber \\
&= \big[\sigma_x^2 + \sigma_y^2\big]^{1-\d_{\e'_{r_{t}}, \e'_{s_{t}} }} \big[(\rho_2-\rho_5) + \mathrm{i} \e'_{r_t}(\rho_3+\rho_4)\big]^{\d_{\e'_{r_{t}}, \e'_{s_{t}} }}   \nonumber \\
& = \theta(r_t,s_t), \mbox{ say}.
\end{align}

Recall the map $\pi'$ from (\ref{eqn:pi'}). As the $t$-th block is $(r_t, s_t)$, we have $\pi'(r_t)=\pi'(s_t)=t$.   
%Thus $\pi'$ gives the projection map where $\pi'(p)=\pi'(q)$ if $p,q$ are in the same block and $\pi'(p)\neq \pi'(q)$ if $p,q$ are not in a common block, and 
Thus $\pi'$ gives $k$ variables $z_1,\ldots,z_k$ from $k$ blocks. In other words, each block introduces one new variable. Define $\xi_\pi(r_t)=(-1)^{\d_{\e_{r_t}',\e_{s_t}'}}$ and $\xi_{\pi}(s_t)=1$ for $t=1,\ldots, k$. Then we can write 
\begin{align*} %\label{eqn:epsilon}
\prod_{\ell=1}^{2k}
\chi_{[1,n]}(j+\sum_{t=\ell}^{2k}\e_t'i_t)=\prod_{\ell=1}^{2k}
\chi_{[1,n]}(j+\sum_{t=\ell}^{2k}\e_t'\xi_{\pi}(t)i_{\pi'(t)}).
\end{align*}

Now from \eqref{eqn:E[ars_epsilon]} and the above equation, we have
\begin{align*}
\lim_{n\to \infty}\vp_n(\dfrac{T_n^{\e_1}}{n^{1/2}}\cdots \dfrac{T_n^{\e_{2k}}}{n^{1/2}})
&=\sum_{\pi \in {\mathcal P}_2(2k)}  \prod_{t=1}^k \theta(r_t,s_t) \lim_{n\to \infty}\frac{1}{n^{k+1}}\sum_{j=1}^n\sum_{I'_k}\prod_{\ell=1}^{2k}
\chi_{[1,n]}(j+\sum_{t=\ell}^{2k}\e_t'\xi_{\pi}(t)i_{\pi'(t)})
\\&=\sum_{\pi \in {\mathcal P}_2(2k)} \prod_{t=1}^k \theta(r_t,s_t) \lim_{n\to \infty}\frac{1}{n^{k+1}}\sum_{j=1}^n\sum_{I'_k}\prod_{\ell=1}^{2k}
\chi_{[\frac{1}{n},1]}\big(\frac{j}{n}+\sum_{t=\ell}^{2k}\e_t'\xi_{\pi}(t)\frac{i_{\pi'(t)}}{n}\big)
\\&=\sum_{\pi \in {\mathcal P}_2(2k)} \prod_{t=1}^k \theta(r_t,s_t) \int_{0}^1 \int_{[-1,1]^k}\prod_{\ell=1}^{2k}\chi_{[0,1]}\big(z_0+\sum_{t=\ell}^{2k}\e_t'\xi_{\pi}(t)z_{\pi'(t)}\big)\prod_{i=0}^kdz_i,%dx_0dx_1\cdots dx_k.
\end{align*} 
where the last equality follows by appealing to convergence of Riemann sums of nice functions to their Riemann integrals.

 Note that $\e'_{s_t}$ and $\e'_{r_t}\xi_{\pi}(r_t)$ have opposite signs.  
Consider the change of variables $\e_{s_t}'z_t$ to $z_t, 1\leq t\leq k$ and appeal to symmetry, 
%$x_1,\ldots, x_k\in [-1,1]$ and $\e_t\in \{1,-1\}$ are symmetric, by 
 we get
 	\begin{align*}
	\lim_{n\to \infty}\vp_n(\dfrac{T_n^{\e_1}}{n^{1/2}}\cdots \dfrac{T_n^{\e_{2k}}}{n^{1/2}})
	=\sum_{\pi \in {\mathcal P}_2(2k)}  \prod_{(r,s)\in \pi} \theta(r,s) \int_{0}^1\int_{[-1,1]^k}\prod_{\ell=1}^{2k}\chi_{[0,1]}\big(z_0+\sum_{t=\ell}^{2k}\e_{\pi}(t) z_{\pi'(t)} \big)\prod_{i=0}^kdz_i,%dx_0dx_1\cdots dx_k.
		\end{align*}
%where $\e_{\pi}(t)$ is as in (\ref{eqn:epsilon_pi}). 
where for any pair-partition $\pi=(r_1,s_1)\cdots (r_k,s_k) \ \text{with} \ \ r_d<s_d,$
%$  \ \text{for}  \ d=1,\ldots, k.$
%Then, for $t=1,\ldots, k$, define 
\begin{align} \label{eqn:epsilon_pi}	
	\e_\pi(t):=\l\{\begin{array}{rll}
		1 & \mbox{ if } & t=s_d,\\
		-1& \mbox{ if } & t=r_d.
	\end{array} \r.
\end{align}
This completes the proof for a single matrix. 

 Now suppose we have $m$ independent matrices with complex entries. Then from Lemma \ref{lem:tracetoeplitz}, for  $\tau_i \in \{1,\ldots, m\}$, we have
\begin{align*}
\vp_n(\dfrac{T_n^{(\tau_1)\e_1}}{n^{1/2}}\cdots \dfrac{T_n^{(\tau_{p})\e_{p}}}{n^{1/2}})=\frac{1}{n^{\frac{p}{2}+1}}\sum_{j=1}^n\sum_{I_{p}'}\E\big[a_{i_1}^{(\tau_1)\e_1}\cdots a_{i_p}^{(\tau_p)\e_p}\prod_{\ell=1}^{p}
\chi_{[1,n]}(j+\sum_{t=\ell}^{p}\e_t'i_t)\big],
\end{align*}
where $I_p'$ is as in  (\ref{iprime}). Based on the arguments employed to derive (\ref{eqn:phi_To(1)odd}), for odd $p$
%Using the arguments those used to obtain (\ref{eqn:phi_To(1)odd}), for $p$ odd, we have
\begin{align*}
\lim_{n \to \infty}\vp_n(\dfrac{T_n^{(\tau_1)\e_1}}{n^{1/2}}\cdots \dfrac{T_n^{(\tau_p)\e_p}}{n^{1/2}})=0.
\end{align*}
Let $p=2k$. Then, using the previous arguments, we have
\begin{align*}
&\lim_{n\to\infty}\vp_n(\dfrac{T_n^{(\tau_1)\e_1}}{n^{1/2}}\cdots \dfrac{T_n^{(\tau_{2k})\e_{2k}}}{n^{1/2}})
\\&=\lim_{n\to\infty}\frac{1}{n^{k+1}}\sum_{j=1}^n\sum_{I_{2k}'}\sum_{\pi\in \mathcal P_2(2k)}\prod_{(r,s)\in \pi}\E[a_{i_r}^{(\tau_r)\e_r} a_{i_s}^{(\tau_s)\e_s}]\prod_{\ell=1}^{2k}
\chi_{[1,n]}(j+\sum_{t=\ell}^{2k}\e_t'i_t)
\\&=\lim_{n\to\infty}\frac{1}{n^{k+1}}\sum_{j=1}^n\sum_{\pi\in \mathcal P_2(2k)}\sum_{I_{2k}'(\pi)}\prod_{(r,s)\in \pi}\d_{\tau_r,\tau_s} \theta(r,s) \prod_{\ell=1}^{2k}
\chi_{[1,n]}(j+\sum_{t=\ell}^{2k}\e_t'\xi_{\pi}(t)i_{\pi'(t)})
%\\&=\sum_{\pi\in {\mathcal P}_2(2k)}\lim_{n\to\infty}\frac{1}{n^{k+1}}\sum_{j=1}^n\sum_{I_k} \prod_{(r,s)\in \pi}\d_{\tau_r,\tau_s} \theta(r,s) \prod_{\ell=1}^{2k}
%\chi_{[1,n]}(j+\sum_{t=\ell}^{2k} \e_t'\xi_{\pi}(t)i_{\pi'(t)})
\\&=\sum_{\pi \in {\mathcal P}_2(2k)}\prod_{(r,s)\in \pi}\d_{\tau_r,\tau_s} \theta(r,s) \int_{0}^1\int_{[-1,1]^k}\prod_{\ell=1}^{2k}
\chi_{[0,1]}(z_0+\sum_{t=\ell}^{2k}\e_{\pi}(t)z_{\pi'(t)}) \prod_{i=0}^kdz_i.%dx_0dx_1\cdots dx_k.
\end{align*}
This completes the proof for $m$ matrices. 

Now if the input entries of matrices are \textit{real}, then 
%in this case, 
$\sigma_y^2=0$ and $\rho_3=\rho_4=\rho_5=0$. Thus from (\ref{eqn:E[ars_epsilon]}),
%\begin{align*}
%	\E[a_{i_{r}} a_{i_{s}}]=\l\{\begin{array}{lll}
%		\sigma_x^2 & \mbox{ if}  & i_{r}=i_{s},\\ 
%		\rho_{2} & \mbox{ if} & i_{r}=-i_{s}.
%	\end{array}\r.
%\end{align*}
%Using techniques similar to those used in the proof for complex entries, 
we have the limit $*$-moments are as in (\ref{eqn:lim_phi(T*tau)}) with $\theta(r,s)=(\sigma_x^2)^{1-\d_{\e'_{r}, \e'_{s} }} \rho_{2}^{\d_{\e'_r,\e'_s}}$.
This completes the proof of Proposition \ref{thm:toeplitz}.
\end{proof}

\begin{proof}[Proof of Corollary \ref{cor:A2+3_TJc}] 
%	The idea of the proof will be the same as the proof of Theorem \ref{thm:toeplitz}. The main technical changes will be the following:
	If the Toeplitz matrix is Hermitian, then $a_j^*=\bar{a}_j$, that is, $a_{-j}=\bar{a}_j$. Thus, $x_j=x_{-j}$ and $y_j=-y_{-j}$. So, in this case, we have $\rho_2=\sigma_x^2$, $\rho_3=-\rho_4$ and $\rho_5=-\sigma_y^2$. Hence from 
 (\ref{eqn:E[ars_epsilon]}), we have the limit $*$-moments are as in (\ref{eqn:lim_phi(T*tau)}) with $\theta(r,s)=(\sigma_x^2 + \sigma_y^2)$. This gives the first part of the corollary.	

Now if the matrices are real symmetric Toeplitz, then $\sigma_y^2$ will be zero, and thus the limit $*$-moments are as in (\ref{eqn:lim_phi(T*tau)}) with $\theta(r,s)=\sigma_x^2$. This gives the second part of the corollary, and hence the proof is complete.
\end{proof}

\begin{proof}[Proof of Corollary \ref{cor:poly_Lsd_T}] By Proposition \ref{thm:toeplitz}, moments of the expected ESD, that is, $n^{-1}\E [\Tr(Q_n^k)]$ converge for every $k\geq 1$. From the formula for the limiting moments, it is easy to see that the even limit moments are bounded by the even moments of an appropriate Gaussian distribution, and hence they satisfy Carleman's condition. So, the limit moments define a unique distribution, and thus weak convergence of the expected ESD holds.  
	%With some additional work, that involves some counting arguments, 
	
 One can show that for every $k\geq 1$, 
	\begin{equation}\label{eq:m4}
		\E\big[n^{-1}\Tr(Q_{n}^{k})-\E[n^{-1}\Tr(Q_n^{k})]\big]^4=O(n^{-2}).
	\end{equation}
This can be shown by following the arguments in Lemma 1.4.3 of Bose \cite{bose_patterned}, who proved (\ref{eq:m4}) 
	when we have real i.i.d.~input. 
 %The same arguments carry over here. 
 We omit the details. 
	Now using the Borel-Cantelli lemma, it follows that the ESD of $Q_n$ converges a.s.~to the same limit. 
\end{proof}

\subsection{Joint convergence of \texorpdfstring{$D_n$ and $P_{n}$}{dnpn}} \label{subsec:dp}
%In this section, we study the joint convergence of $D_n$ and $P_{n}$. 
%The following proposition provides the joint convergence.
\begin{proposition} \label{thm:jc_D+P}
  Suppose $\{D_{n}^{(\tau)}; 1\leq \tau \leq m\}$ are $m$ deterministic Toeplitz matrices whose input entries satisfy Assumption \ref{assump:determin} and $P_n$ is the backward identity permutation matrix. Then $\{P_n, D_{n}^{(\tau)}; 1 \leq \tau \leq m\}$ converge jointly. 
	The limit $*$-moments are as given in (\ref{eqn:lim_phi_P,D*}).
\end{proposition}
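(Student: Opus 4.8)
The plan is to exploit the special algebraic structure of $P_n$ in order to reduce every mixed monomial to a product of Toeplitz matrices, whose normalized trace can then be read off directly from the absolute summability of the input sequences. First I would record the elementary properties of the backward identity: $P_n = P_n^* = P_n^{-1}$, so that $P_n^2 = I_n$ and $P_n e_j = e_{n+1-j}$. The crucial identity is that conjugation by $P_n$ transposes a Toeplitz matrix: for any $D_n = ((d_{i-j}))$ one has $(P_n D_n P_n)_{ij} = (D_n)_{n+1-i,\,n+1-j} = d_{j-i}$, i.e.\ $P_n D_n P_n = D_n^t$, equivalently $P_n D_n = D_n^t P_n$. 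I would also note that both the transpose and the adjoint of a Toeplitz matrix are again Toeplitz (with the sequence reversed, respectively reversed and conjugated), and that $P_n^\e = P_n$ for $\e\in\{1,*\}$.

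Next, using $P_n^2 = I_n$ together with $P_n D_n = D_n^t P_n$, I would push every occurrence of $P_n$ in a general word $W = M_1\cdots M_q$ (each $M_r$ being $P_n$ or $(D_n^{(\tau_r)})^{\e_r}$) to the right-hand end. Moving a single $P_n$ past a Toeplitz factor transposes that factor, so after collecting all of them we obtain the exact matrix identity $W = G_n P_n^{h}$, where $h$ is the number of $P_n$ factors in $W$ and $G_n = \tilde D_n^{(1)}\cdots \tilde D_n^{(s)}$ is a product of Toeplitz matrices, each $\tilde D_n^{(r)}$ being one of the $D_n^{(\tau)}$ possibly transposed and/or conjugated according to its $\e_r$ and to the parity of the number of $P_n$'s originally lying to its left. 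By cyclicity of the trace, $\varphi_n(W) = \tfrac1n\Tr(G_n P_n^{h})$ then depends only on $G_n$ and on the parity of $h$.

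The remaining analytic step is to evaluate the two resulting limits. Writing out the diagonal entry in the spirit of Lemma \ref{lem:tracetoeplitz}, $(G_n)_{jj} = \sum_{i_1+\cdots+i_s=0}\prod_r c^{(r)}_{i_r}\prod_{\ell}\chi_{[1,n]}(\cdots)$, where $c^{(r)}$ is the (reversed/conjugated) sequence of $\tilde D_n^{(r)}$, I would show
\[
\lim_{n\to\infty}\frac1n\Tr(G_n) \;=\; \sum_{\substack{i_1,\dots,i_s\in\Z \\ i_1+\cdots+i_s=0}} c^{(1)}_{i_1}\cdots c^{(s)}_{i_s}.
\]
The point is that, for each fixed shift configuration, the boundary factors $\chi_{[1,n]}(\cdots)$ fail for only $O(1)$ values of $j$, so the Cesàro average over $j$ tends to $1$; the interchange of the limit with the sum over $i_1,\dots,i_s$ is justified by dominated convergence, with summable majorant $\prod_r \sum_i |c^{(r)}_i| < \infty$ coming from Assumption \ref{assump:determin}. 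When $h$ is even, $P_n^{h}=I_n$ and the above is the answer. When $h$ is odd, $P_n^{h}=P_n$ and $\Tr(P_n G_n)=\sum_j (G_n)_{n+1-j,\,j}$; the entry $(G_n)_{n+1-j,j}$ forces the total shift $i_1+\cdots+i_s = n+1-2j$, so for each admissible (bounded) total shift only one value of $j$ contributes. Hence $\Tr(P_n G_n)$ stays bounded by $\prod_r\sum_i|c^{(r)}_i| = O(1)$, giving $\varphi_n(W)\to 0$. Collecting the even and odd cases yields the limiting $*$-moments of \eqref{eqn:lim_phi_P,D*}.

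I expect the main obstacle to be the bookkeeping in the reduction step: keeping exact track of how the transpositions produced by the $P_n$-conjugations compose with the adjoints $\e_r$, so that each factor's reversed/conjugated sequence $c^{(r)}$ — and hence the precise index constraints in the final formula — is identified correctly. The analytic content, namely the two limits above, is routine once absolute summability is available; the genuinely delicate part is purely combinatorial and notational, i.e.\ matching the reduced constant-term expression to the exact form of \eqref{eqn:lim_phi_P,D*}.
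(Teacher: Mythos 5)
Your proof is correct, but it takes a genuinely different route from the paper. The paper keeps every $P_n$ in place and derives a bespoke trace formula (Lemma \ref{lem:hankel}) by iterating the shift-matrix computation of Lemma \ref{lem:tracetoeplitz}, tracking the boundary indicators $m_{\underline k, e}(j,i)$ and the alternating-sign constraint $\d_{0,\sum_{c}(-1)^{p-c}\sum_t \e'_t i_t}$ directly; it then passes to the limit via the truncation $D_{n,n^\alpha}$ (Lemma \ref{lem:lim,D_Dn,m} and Remark \ref{rem:obse_D,D_m}). You instead conjugate the $P_n$'s away: since $P_n^2=I_n$ and $M\mapsto P_nMP_n$ is an algebra automorphism acting as transposition on each Toeplitz factor, every word collapses exactly to $G_nP_n^h$ with $G_n$ a product of Toeplitz matrices (sequences reversed/conjugated according to the parity of $P_n$'s to the left and to $\e_r$), and your $\tfrac1n\Tr(G_n)$ and $\tfrac1n\Tr(G_nP_n)$ computations reproduce precisely the paper's two cases\textemdash the reversal of the sequences in the odd-parity groups is exactly the source of the signs $(-1)^{p-c}$ in \eqref{eqn:lim_phi_P,D*}, and your observation that $(G_n)_{n+1-j,j}$ pins $j$ via $n+1-2j=i_1+\cdots+i_s$ is the same mechanism as the paper's $\d_{2j-1-n,\,\cdot}$ constraint, yielding $O(1/n)$ for odd $h$. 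Your direct dominated-convergence argument (majorant $\prod_r\sum_i|c^{(r)}_i|$, extending each summand by zero outside the finite range) is also legitimate and renders the $n^\alpha$-truncation device unnecessary in this purely deterministic setting; the paper's heavier machinery pays off only later, since the same trace formula and truncation remark are reused verbatim for the mixed monomials involving random $T_n$ (Proposition \ref{thm:jc_T+P} and Theorem \ref{thm:JC_tdp}), where your conjugation trick would additionally require joint control of $T_n$ together with its transpose $T_n^t$ (whose entries are correlated with those of $T_n$ under Assumption \ref{assump:toeI}), i.e.\ a mild extension of Proposition \ref{thm:toeplitz}. As you anticipate, the only real work left in your approach is notational: verifying that after substituting $c^{(r)}_i=d^{(\tau_r)\e_r}_{\pm i}$ and re-indexing $i\mapsto -i$ in the transposed factors, your constraint $i_1+\cdots+i_s=0$ becomes exactly the constraint in \eqref{eqn:lim_phi_P,D*}; this does check out.
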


We first derive a trace formula for an arbitrary monomial in $P_n$ and Toeplitz matrices. 
\begin{lemma}  \label{lem:hankel}
	Let $M^{(\tau)}_{n}$ be $m$ Toeplitz matrices (random or non-random) with input sequences $(a^{(\tau)}_i)_{i\in \Z}$ for $\tau=1,2, \ldots, m$ and $P_n$ be the backward identity permutation matrix. Then for $\e_i\in \{1,*\}$, $\tau_i \in \{1, \ldots,m\}$ and for integers $0 \leq k_1 \leq k_2 \leq \cdots \leq k_p$, we have
	\begin{align} \label{eqn:trace_Hs}
		 %\Tr\big[(P_nM_{n}^{\e_1} \cdots M_{n}^{\e_{k_1}}) (P_nM_{n}^{\e_{k_1+1}} \cdots M_{n}^{\e_{k_2}})  \cdots (P_nM_{n}^{\e_{k_{p-1}+1}} \cdots M_{n}^{\e_{k_p}})\big]
	&\Tr\big[(P_nM_{n}^{(\tau_1)\e_1} \cdots M_{n}^{(\tau_{k_1}) \e_{k_1}}) %(P_nM_{n}^{(\tau_{k_1+1})\e_{k_1+1}} \cdots M_{n}^{(\tau_{k_2})\e_{k_2}})  
 \cdots (P_nM_{n}^{(\tau_{k_{p-1}+1})\e_{k_{p-1}+1}} \cdots M_{n}^{(\tau_{k_p})\e_{k_p}})\big] \nonumber \\
		& =
		\left\{ \begin{array}{ll}
			\displaystyle{ \sum_{j=1}^n \sum_{I_{k_p}} \prod_{t=1}^{k_p} a^{(\tau_t)\e_t}_{i_t} \prod_{e=1}^p m_{\underline k, e}(j,i)  \d_{0,\sum_{c=1}^p(-1)^{p-c} \sum_{t=k_{c-1}+1}^{k_{c}}  \e_t'i_t}} & \mbox{if $p$ is even},
			\\
			\displaystyle{\sum_{j=1}^n \sum_{I_{k_p}} \prod_{t=1}^{k_p} a^{(\tau_t)\e_t}_{i_t} \prod_{e=1}^p  m_{\underline k, e}(j,i)   \d_{2j-1-n,\sum_{c=1}^p(-1)^{p-c} \sum_{t=k_{c-1}+1}^{k_{c}}  \e'_t i_t}} & \mbox{if $p$ is odd},
		\end{array}
		\right.
	\end{align}
	where $I_{k_p}$ is as in (\ref{eqn:i_k in -n to n})  
	and for $e=1,2, \ldots,p$, 
	\begin{align} \label{eqn:m_chi,t_Hn}
		m_{\underline k, e}(j,i) = \prod_{\ell=k_{e-1}+1}^{k_e} \chi_{[1,n]}\big(j+ (-1)^{p-e} \sum_{t=\ell}^{k_e} \e'_t i_t + \sum_{c=e+1}^{p} (-1)^{p-c}  \sum_{t=k_{c-1}+1}^{k_c} \e'_t i_t \big), 
	\end{align}
with $k_0=0$ and $\sum_{c=p+1}^{p}(-1)^{p-c}  \sum_{t=k_{c-1}+1}^{k_c} \e'_t i_t =0$. Note that $m_{\underline k, e}(j,i)$ depends on $k_1, k_2, \ldots, k_p$.
\end{lemma}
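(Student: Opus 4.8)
The plan is to establish the trace formula by iterated application of the action of $M_n^{\e}$ and $P_n$ on the standard basis vectors $\{e_j\}$, exactly as was done in the proof of Lemma \ref{lem:tracetoeplitz}. First I would record the two elementary facts I need: from the computation leading to \eqref{eqn:2} and \eqref{eqn:3}, for any $\e\in\{1,*\}$ we have
\begin{align*}
	M_n^{(\tau)\e} e_j = \sum_{i=-(n-1)}^{n-1} a_i^{(\tau)\e}\,\chi_{[1,n]}(j+\e' i)\,e_{j+\e' i},
\end{align*}
and since $P_n$ is the backward identity, $P_n e_j = e_{n+1-j}$. The key structural observation is that $P_n$ acts on an index $j$ by the affine reflection $j\mapsto n+1-j$, which flips the sign of all subsequent index-shifts. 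So when I push a basis vector rightward through the monomial, each Toeplitz block contributes an additive shift $\sum \e'_t i_t$, and each occurrence of $P_n$ negates the accumulated shifts that come after it. This is precisely the source of the alternating signs $(-1)^{p-e}$ and $(-1)^{p-c}$ in \eqref{eqn:m_chi,t_Hn} and in the Kronecker delta.

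The main steps, carried out in order, are as follows. I would start from $\Tr(\cdots)=\sum_{j=1}^n e_j^t(\cdots)e_j$ and evaluate the monomial acting on $e_j$ from the right, block by block. Within a single Toeplitz block $M_n^{(\tau_{k_{e-1}+1})\e_{k_{e-1}+1}}\cdots M_n^{(\tau_{k_e})\e_{k_e}}$, repeated application as in the proof of Lemma \ref{lem:tracetoeplitz} sends $e_j$ to a sum over $i_{k_{e-1}+1},\ldots,i_{k_e}$ of $\big(\prod a_{i_t}^{(\tau_t)\e_t}\big)$ times a product of indicator factors $\chi_{[1,n]}$ and a single resulting basis vector whose index is $j$ shifted by $\sum_{t=k_{e-1}+1}^{k_e}\e'_t i_t$. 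Then each $P_n$ reflects the current index via $j'\mapsto n+1-j'$. Tracking the index through all $p$ blocks and all $p$ reflections produces the nested affine expression; the accumulated reflections turn the running shift for the $e$-th block into $(-1)^{p-e}\sum_{t=\ell}^{k_e}\e'_t i_t + \sum_{c=e+1}^p(-1)^{p-c}\sum_{t=k_{c-1}+1}^{k_c}\e'_t i_t$, matching \eqref{eqn:m_chi,t_Hn}. Finally, taking the inner product with $e_j$ forces the final index to equal $j$, which after simplification yields the diagonal constraint: an even number of reflections returns the base point to $j$ (giving $\d_{0,\,\cdots}$), while an odd number sends it to $n+1-j$ shifted, giving the constraint $2j-1-n=\sum_c(-1)^{p-c}\sum_t\e'_t i_t$ in the odd case.

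The bookkeeping obstacle, and the step I expect to demand the most care, is verifying that the sign pattern and the telescoping of the reflections produce exactly the stated indices in both $m_{\underline k,e}(j,i)$ and the two Kronecker deltas, including the boundary conventions $k_0=0$ and the empty sum for $c=p+1$. I would handle this by an induction on $p$ (the number of $P_n$ factors): the base case $p=1$ is a single reflected Toeplitz block and is checked directly against Lemma \ref{lem:tracetoeplitz} with $j$ replaced by $n+1-j$, while the inductive step peels off the leftmost $P_n M_n^{(\tau_1)\e_1}\cdots M_n^{(\tau_{k_1})\e_{k_1}}$ and applies the reflection $j\mapsto n+1-j$ to the remaining monomial of one lower $P_n$-degree. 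The parity of $p$ governs whether the final reflection lands back at $j$ or at its mirror image, which is exactly what distinguishes the two cases of \eqref{eqn:trace_Hs}. Everything else is a routine, if notation-heavy, rearrangement of summation indices, so I would state the action formulas, do the $p=1$ and $p=2$ cases explicitly to fix the sign conventions, and then assert the general formula by the induction just described.
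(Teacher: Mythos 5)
Your proposal is correct and follows essentially the same route as the paper: the paper likewise starts from $\Tr(\cdot)=\sum_j e_j^t(\cdot)e_j$, uses the action formula $M_n^{\e}e_j=\sum_i a_i^{\e}\chi_{[1,n]}(j+\e' i)e_{j+\e' i}$ from Lemma \ref{lem:tracetoeplitz} together with $P_ne_j=e_{n+1-j}$, pushes $e_j$ through the monomial block by block so that each $P_n$ reflects the running index and flips the signs of the accumulated shifts, and obtains the two parity cases of \eqref{eqn:trace_Hs} by "continuing the process." Your induction on $p$ with explicit $p=1,2$ base cases is simply a more formal rendering of that same iteration, so no substantive difference remains.
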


\begin{proof}
	For simplicity of notation, we prove this lemma only for 
 %of 
 a single matrix. The same argument will work if we have multiple collections. 
 %for any collection of matrices also.
	Note from Lemma \ref{lem:tracetoeplitz} (a) that 
	\begin{align*}
		(M_n^{\e_{1}}\cdots M_n^{\e_{k_1}})e_j
		= \sum_{I_{k_1}} a^{\epsilon_1}_{i_1}\cdots a^{\epsilon_{k_1}}_{i_{k_1}}\prod_{\ell=1}^{k_1}
		\chi_{[1,n]}(j+\sum_{t=\ell}^{k_1}\e_t'i_t) e_{j+\sum_{t=1}^{k_1} \e_t'i_t},
	\end{align*}
	where $I_{k_1}$ is as in (\ref{eqn:i_k in -n to n}) and $\e'$ is as in (\ref{eqn:epsilon'}). Since $P_ne_j=e_{n+1-j}$, we have  
	\begin{align*}
		(P_n M_n^{\e_{1}}\cdots M_n^{\e_{k_1}})e_j &= \sum_{I_{k_1}} a^{\epsilon_1}_{i_1}\cdots a^{\epsilon_{k_1}}_{i_{k_1}}\prod_{\ell=1}^{k_1}
		\chi_{[1,n]}(j+\sum_{t=\ell}^{k_1}\e_t'i_t) e_{n+1 -(j+\sum_{t=1}^{k_1} \e_t'i_t)}.
	\end{align*}
	% Hence for $\epsilon \in \{1,*\}$,
	% \begin{align*}
	% 	H_{n,s}^{(\tau) \e}e_j
	% 	=\sum_{i} a^{(\tau)\e}_i  \chi_{[1,n]}\big((n+1) \d_{-1\e'}+\e'(j+i)\big) e_{n+1-j-i},
	% \end{align*} 
	Now 
	\begin{align*}
		&(P_n M_n^{\e_{k_{p-2}+1}}\cdots M_n^{\e_{k_{p-1}}}) (P_n M_n^{\e_{k_{p-1}+1}}\cdots M_n^{\e_{k_p}}) e_j\\
		&  = \hskip-5pt \sum_{I_{k_p-k_{p-1}}}  \prod_{\ell=k_{p-1}+1}^{k_p} \hskip-5pt a^{\epsilon_\ell}_{i_\ell} \prod_{\ell=k_{p-1}+1}^{k_p}
		\hskip-5pt \chi_{[1,n]}(j+\sum_{t=\ell}^{k_p}\e_t'i_t) (P_n M_n^{\e_{k_{p-2}+1}}\cdots M_n^{\e_{k_{p-1}}}) e_{n+1 -(j+\sum_{t=k_{p-1}+1}^{k_p} \e_t'i_t)} \\
		&=\sum_{I_{k_p-k_{p-2}}}  \prod_{\ell=k_{p-2}+1}^{k_p} a^{\epsilon_\ell}_{i_\ell}  \prod_{\ell=k_{p-1}+1}^{k_p}
		\chi_{[1,n]}(j+\sum_{t=\ell}^{k_p}\e_t'i_t) \\
		& \qquad \times \prod_{\ell=k_{p-2}+1}^{k_{p-1}}
		\chi_{[1,n]}\big( j -  \sum_{t=\ell}^{k_{p-1}}\e_t'i_t  +\sum_{t=k_{p-1}+1}^{k_p}\e_t'i_t\big)  e_{j+\sum_{t=k_{p-1}+1}^{k_p} \e_t'i_t- \sum_{t=k_{p-2}+1}^{k_{p-1}} \e_t'i_t}.
	\end{align*}
	%WHERE $\THETA_\E(J)$ IS AS DEFINED IN (\REF{EQN:THETA_EPS(J)}).
	Continuing the process, for $\e_1,\ldots, \e_{k_p}\in \{1,*\}$, we get 
	\begin{align*}
		&	\big[(P_nM_{n}^{\e_1} \cdots M_{n}^{\e_{k_1}}) (P_nM_{n}^{\e_{k_1+1}} \cdots M_{n}^{\e_{k_2}})  \cdots (P_nM_{n}^{\e_{k_{p-1}+1}} \cdots M_{n}^{\e_{k_p}})\big] e_j\\
		& =	\l\{\begin{array}{lll}
			\displaystyle  \sum_{I_{k_p}} \prod_{t=1}^{k_p} a^{\e_t}_{i_t} \prod_{e=1}^p m_{\underline k, e}(j,i) e_{j+\sum_{c=1}^p(-1)^{p-c} \sum_{t=k_{c-1}+1}^{k_{c}}  \e_t'i_t}  & \mbox{ if $p$ is even}, \\
			\displaystyle	\sum_{I_{k_p}} \prod_{t=1}^{k_p} a^{\e_t}_{i_t} \prod_{e=1}^p m_{\underline k, e}(j,i) e_{n+1-j-\sum_{c=1}^p(-1)^{p-c} \sum_{t=k_{c-1}+1}^{k_{c}}  \e_t'i_t} &  \mbox{ if $p$ is odd},
		\end{array}\r.
	\end{align*}
	where $I_{k_p}$ and $m_{\underline k, e}(j,i)$ are as in (\ref{eqn:i_k in -n to n})  and (\ref{eqn:m_chi,t_Hn}), respectively.
	Now using the fact that $\Tr(A_n)=\sum_{j=1}^ne_j^t(A_n)e_j$, we get (\ref{eqn:trace_Hs}). This completes the proof.
	%Using the same idea, we can derive the following trace formula \\
	%$(P_nM_{n}^{(\tau_1)\e_1} \cdots M_{n}^{(\tau_{k_1}) \e_{k_1}}) (P_nM_{n}^{(\tau_{k_1+1})\e_{k_1+1}} \cdots M_{n}^{(\tau_{k_2})\e_{k_2}})  \cdots (P_nM_{n}^{(\tau_{k_{p-1}+1})\e_{k_{p-1}+1}} \cdots M_{n}^{(\tau_{k_p})\e_{k_p}})$.
\end{proof}
Dealing with the joint convergence of any monomial of a deterministic Toeplitz matrix ($D_n$) is easy when we consider its ``truncated version." The following lemma provides the limiting behavior of the tracial moment of a deterministic Toeplitz matrix in terms of its finite diagonal versions.
\begin{lemma} \label{lem:lim,D_Dn,m}
    Let $D_n=((d_{i-j}))$ be the deterministic Toeplitz matrix whose input entries satisfy Assumption \ref{assump:determin}. For a fixed positive integer $m$, we define a new matrix $D_{n,m}$ whose $(i,j)$-th entry is:
  %the following:
\begin{align} \label{def:D_n,m}
	D_{n,m}(i,j) &:= d_{i-j} \chi_{[0,m]} (|i-j|).
 	%D_{n,m}(i,j) &:= d_{i-j} \chi_{[0,m]} (|i-j|). \nonumber
\end{align}
Then for $\alpha \in (0,1)$, we have
\begin{align} \label{eqn:lim+same+D=D_m}
    \lim_{n \to \infty}  \vp_n( D_{n}^p) = \lim_{n \to \infty}  \vp_n(( D_{n,n^\alpha})^p).
\end{align}
\end{lemma}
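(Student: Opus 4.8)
The plan is to expand the normalized traces of both $D_n^p$ and $(D_{n,n^\alpha})^p$ as \emph{absolutely convergent} lattice sums over the increments running along the closed path of length $p$, and then pass to the limit by dominated convergence, checking that the two sums have the same limit. Since $D_n$ is deterministic, $\vp_n(D_n^p)=\frac1n\Tr(D_n^p)$, and for any Toeplitz matrix $M=((c_{i-j}))$ one has $\Tr(M^p)=\sum_{i_1,\ldots,i_p=1}^n c_{i_1-i_2}c_{i_2-i_3}\cdots c_{i_p-i_1}$. Introducing the increments $u_t=i_t-i_{t+1}$ (cyclically, so $i_{p+1}=i_1$), which satisfy $\sum_{t=1}^p u_t=0$, and noting that once $i_1$ and $u_1,\ldots,u_{p-1}$ are fixed every vertex is determined via $i_{t+1}=i_1-s_t$ with partial sums $s_t=u_1+\cdots+u_t$ (and $u_p=-(u_1+\cdots+u_{p-1})$), I would write
\[
\Tr(D_n^p)=\sum_{u_1,\ldots,u_{p-1}\in\Z} d_{u_1}\cdots d_{u_{p-1}}\, d_{-(u_1+\cdots+u_{p-1})}\, N_n(u_1,\ldots,u_{p-1}),
\]
where $N_n(\vec u)$ counts the admissible starting points $i_1$ for which every $i_t\in[1,n]$. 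A direct count gives $N_n(\vec u)=\max\!\big(0,\, n-D(\vec u)\big)$, where $D(\vec u)=\max_{0\le t\le p-1}s_t-\min_{0\le t\le p-1}s_t$ (with $s_0=0$) is the diameter of the walk; in particular $0\le N_n(\vec u)/n\le 1$ and, for each fixed $\vec u$, $N_n(\vec u)/n\to 1$.

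For the truncated matrix the same expansion holds with the extra constraint that each retained factor satisfies $|i_t-i_{t+1}|\le n^\alpha$, so the diagonal trace term factorizes the truncation as $\prod_{t=1}^p\chi_{[0,n^\alpha]}(|u_t|)$ with $u_p:=-(u_1+\cdots+u_{p-1})$. Hence
\[
\vp_n\big((D_{n,n^\alpha})^p\big)=\sum_{u_1,\ldots,u_{p-1}\in\Z} d_{u_1}\cdots d_{-(u_1+\cdots+u_{p-1})}\,\Big(\prod_{t=1}^p\chi_{[0,n^\alpha]}(|u_t|)\Big)\frac{N_n(\vec u)}{n},
\]
while $\vp_n(D_n^p)$ is the identical sum without the truncation indicators.

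The key analytic input is Assumption \ref{assump:determin}: since $\sum_k|d_k|<\infty$ forces $\|d\|_\infty:=\sup_k|d_k|<\infty$, the single function $g(\vec u):=|d_{u_1}|\cdots|d_{u_{p-1}}|\,\|d\|_\infty$ dominates the summands of both series uniformly in $n$, and $\sum_{\vec u}g(\vec u)\le\|d\|_\infty\big(\sum_k|d_k|\big)^{p-1}<\infty$. For each fixed $\vec u$ the boundary factor $N_n(\vec u)/n\to 1$, and because $\alpha>0$ gives $n^\alpha\to\infty$ the truncation indicator $\prod_t\chi_{[0,n^\alpha]}(|u_t|)\to 1$ as well. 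Dominated convergence therefore applies to both series and yields the common value
\[
\lim_{n\to\infty}\vp_n(D_n^p)=\lim_{n\to\infty}\vp_n\big((D_{n,n^\alpha})^p\big)=\sum_{u_1,\ldots,u_{p-1}\in\Z} d_{u_1}\cdots d_{u_{p-1}}\, d_{-(u_1+\cdots+u_{p-1})},
\]
which is exactly \eqref{eqn:lim+same+D=D_m}.

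This is essentially a dominated-convergence computation, so there is no deep obstacle. The only points needing care are the exact bookkeeping of the boundary count $N_n(\vec u)$ (so that $0\le N_n/n\le 1$ with pointwise limit $1$) and the verification that $g$ is summable over the full lattice of increments; both follow from absolute summability. I expect the one mildly fiddly step to be phrasing the admissibility constraints on the path vertices cleanly enough that the truncation constraint factorizes as $\prod_{t=1}^p\chi_{[0,n^\alpha]}(|u_t|)$. Once that is in place the limit passage is immediate, and the hypothesis $\alpha\in(0,1)$ is used here only through $n^\alpha\to\infty$.
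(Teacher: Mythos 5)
Your proof is correct, but it takes a genuinely different route from the paper's. The paper argues by decomposition: it writes $D_n = D_{n,n^\alpha} + D^{(c)}_{n,n^\alpha}$ with $D^{(c)}_{n,m}=D_n-D_{n,m}$, expands $\Tr(D_n^p)$ binomially into $2^p$ words, uses cyclicity of the trace to place a complement factor first, and bounds each mixed word in absolute value by $\bigl(\sum_{|i_1|\geq n^\alpha-1}|d_{i_1}|\bigr)\bigl(\sum_{i=-(n-1)}^{n-1}|d_i|\bigr)^{p-1}$, which vanishes as $n\to\infty$ since the first factor is a tail of an absolutely convergent series. Thus the paper shows only that the \emph{difference} of the two normalized traces tends to zero, and defers the existence and evaluation of the common limit to the proof of Proposition \ref{thm:jc_D+P}, where dominated convergence is applied to the truncated matrix alone. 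You instead expand both traces simultaneously as absolutely convergent sums over the cyclic increments $u_1,\ldots,u_{p-1}$, compute the boundary count in closed form, $N_n(\vec u)=\max\bigl(0,\,n-D(\vec u)\bigr)$ with $D(\vec u)$ the diameter of the partial sums $s_0,\ldots,s_{p-1}$, and apply dominated convergence once (dominating by $|d_{u_1}|\cdots|d_{u_{p-1}}|\,\sup_k|d_k|$, which is summable), obtaining existence, equality, \emph{and} the limit value $\sum_{\vec u} d_{u_1}\cdots d_{u_{p-1}}d_{-(u_1+\cdots+u_{p-1})}$ — i.e.\ \eqref{eqn:lim_D_n^p} — in one pass; your bookkeeping (the exact factorization of the truncation as $\prod_{t=1}^p\chi_{[0,n^\alpha]}(|u_t|)$, the bound $0\le N_n/n\le 1$, and the automatic vanishing of $N_n$ when some $|u_t|\geq n$) is all sound. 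What each approach buys: yours is more self-contained and slightly stronger for this lemma, while the paper's decomposition-plus-tail-bound scheme has the advantage of extending verbatim to mixed monomials involving random Toeplitz matrices and $P_n$ (Remark \ref{rem:obse_D,D_m}), where no closed-form count like $N_n(\vec u)$ is available and only the tail estimate is needed. Your closing observation is also accurate: this lemma uses only $n^\alpha\to\infty$ (i.e.\ $\alpha>0$); the restriction $\alpha<1$ earns its keep elsewhere, e.g.\ in \eqref{eqn:chi_D_remove}, where indicator factors carrying the deterministic indices are removed at cost $o(1)$ precisely because $n^\alpha/n\to 0$.
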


\begin{proof}
Consider the matrix $D^{(c)}_{n,m} := D_n - D_{n,m}$.
% whose $(i,j)$-th entry is:
  %the following:
%\begin{align*} %\label{def:D_n,m}
	%D_{n,m}(i,j) &:= d_{i-j} \chi_{[0,m]} (|i-j|).
 %	D^{(c)}_{n,m}(i,j) &:= d_{i-j} \chi_{[m+1,n]} (|i-j|). \nonumber
%\end{align*}
For any $\alpha \in (0,1)$, let $m=n^\alpha$, $A^{(0)}_{n}= D_{n,n^\alpha}$ and $A^{(1)}_{n}= D^{(c)}_{n,n^\alpha}$. Then for $\mu_i \in \{0,1\}$,
%Note that $D_n = D_{n,n^\alpha} + D^c_{n,n^\alpha}$. Therefore , from binomial expansion, we have
\begin{align*} %\label{def:D_n,m}
	 \vp_n( D_{n})^p 
  %&= \frac{1}{n} \Tr( D_{n})^p \\
  &=  \frac{1}{n} \Tr(A^{(0)}_{n} + A^{(1)}_{n})^p\\
  &=\sum_{\mu_s \in \{0,1\} : \sum_{s=1}^p \mu_s = 0}   \frac{1}{n} \Tr[A^{(\mu_1)}_{n} A^{(\mu_2)}_{n} \cdots A^{(\mu_p)}_{n}] + \sum_{\mu_s \in \{0,1\} : \sum_{s=1}^p \mu_s \neq 0}   \frac{1}{n}\Tr[A^{(\mu_1)}_{n} A^{(\mu_2)}_{n} \cdots A^{(\mu_p)}_{n}]\\
  &=\frac{1}{n} \Tr( D_{n, n^\alpha})^p + \sum_{I_\mu(p)}   \frac{1}{n} \Tr[A^{(\mu_1)}_{n} A^{(\mu_2)}_{n} \cdots A^{(\mu_p)}_{n}],
\end{align*}
where $I_\mu(p)= \{(\mu_1, \mu_2, \ldots, \mu_p) : \mu_1, \ldots, \mu_p \in \{0, 1\} \mbox{ and } \mu_s = 1 \mbox{ for at least one } s \in \{1,2, \ldots, p\}\}.$
Thus in order to show (\ref{eqn:lim+same+D=D_m}), it is sufficient to establish 
\begin{align*}
 \lim_{n \to \infty} \frac{1}{n} \Tr[A^{(\mu_1)}_{n} A^{(\mu_2)}_{n} \cdots A^{(\mu_p)}_{n}] = 0, \ \ \forall \ (\mu_1, \mu_2, \ldots, \mu_p)  \in I_\mu(p).
\end{align*}

%\begin{align*} %\label{def:D_n,m}
%	 \frac{1}{n}\Tr[(D^c_{n, n^\alpha})^\ell( D_{n, n^\alpha})^{p-\ell}] = o(1) \ \forall \ \ell \in \{1,2, \ldots, p\}.
%\end{align*}
Now observe the fact that $\Tr[AB]= \Tr [BA]$ for any square matrices $A,B$. It is thus enough to deal with  $\Tr[A^{(\mu_1)}_{n} A^{(\mu_2)}_{n} \cdots A^{(\mu_p)}_{n}]$ for $\mu_1=1$.  
%WLOG, first assume a typical term of the set  $ I_\mu(p)$ that have its first $\ell$ many $\mu_i$ as $2$ and the last $(p-\ell)$ many $\mu_i$ as $1$ (From the definition of set $ I_\mu(p)$, note that $\ell \geq 1$). Then 
Now from Lemma \ref{lem:tracetoeplitz}, we have
\begin{align*}
\frac{1}{n}\Tr[ A^{(\mu_1)}_{n} A^{(\mu_2)}_{n} \cdots A^{(\mu_p)}_{n}] 
&=  \sum_{i_1 \in T_{\mu_1}, i_2 \in T_{\mu_2}, \ldots, i_p \in T_{\mu_p}} d_{i_1} d_{i_2} \cdots d_{i_p} \times  \frac{1}{n}\sum_{j=1}^n \prod_{\ell=1}^{p} \chi_{[1,n]}(j+\sum_{t=\ell}^{p}i_t)\d_{0,\sum_{t=1}^p i_t},
\end{align*}
where for $s=1,2, \ldots, p$,
\begin{align*}% \label{eqn:epsilon_pi}	
  T_{\mu_s}
  & :=	\l\{\begin{array}{lll}
			\displaystyle  [-(n^\alpha-1), (n^\alpha -1)] & \mbox{ if }  \mu_s=0, \\ 
			\displaystyle	 [-(n-1), -(n^\alpha-1)] \cup [(n^\alpha -1), (n-1)]  & \mbox{ if }  \mu_s=1.
		\end{array}\r.
\end{align*}
Here note that $T_{\mu_1} =T_1$ and $T_{\mu_s} \subseteq [-(n-1), (n-1)]$ for all $s=2, 3,\ldots, p$. 
Also observe that for each $j,\ell$; $|\chi_{[1,n]}(j+\sum_{t=\ell}^p i_t)| \leq 1$, and $\d_{0,\sum_{t=1}^pi_t} \leq 1$. Thus we have 
\begin{align*} %\label{eqn:Tr[mu_l=2]}
 |\frac{1}{n}\Tr[ A^{(\mu_1)}_{n} A^{(\mu_2)}_{n} \cdots A^{(\mu_p)}_{n}]| 
 & \leq \left(\sum_{i_1 = -(n-1)}^ {-(n^\alpha-1)} |d_{i_1}| + \sum_{i_1 = n^\alpha-1}^ {n-1} |d_{i_1}| \right) \sum_{i_2, i_{3}, \ldots, i_p = -(n-1)} ^ {n-1}|d_{i_{2}} d_{i_{3}} \cdots d_{i_p}|.
\end{align*}
Since $\alpha \in (0,1)$ and $\{d_i\}$ is absolutely summable, the second factor is summable and the first part is the tail part of the series $\sum_{i_1=- \infty}^\infty |d_{i_1}|$. Hence the whole quantity converges to zero as $n$ tends to infinity. 
This completes the proof.
\end{proof}
The following remark provide a generalization of Lemma \ref{lem:lim,D_Dn,m}.
\begin{remark} \label{rem:obse_D,D_m}
 Let $M_{n,1}=n^{-1/2}T_n$ where $T_n$ is a random Toeplitz matrix. For $i=1,2, \ldots, m$, let $\{M_{n,1}^{(i)}\}$ be independent copies of $M_{n,1}$ and  $M_{n,2}^{(i)}=D^{(i)}_n$ be $m$ deterministic Toeplitz matrices. Assume that the input entries of $T_n$ satisfy Assumption \ref{assump:toeI} and the input entries of $D^{(i)}_n$   satisfy Assumption \ref{assump:determin}.
Suppose $Q(T_n, P_n,D_n)$ is any monomial in $\{P_n, M^{(i)}_{n,j}; 1 \leq i \leq m, 1\leq j \leq 2\}$, and for $\alpha \in (0,1)$,  $Q(T_n, P_n,D_{n, n^\alpha})$ is the same monomial obtained by replacing $D_n$ by $D_{n, n^\alpha}$ in $Q(T_n, P_n, D_n)$.	Then $\lim_{n \to \infty}  \vp_n\big(Q(T_n, P_n, D_n)\big) = \lim_{n \to \infty}  \vp_n\big(Q(T_n, P_n, D_{n, n^\alpha})\big)$. The proof will follow from a similar argument that was used to establish (\ref{eqn:lim+same+D=D_m}) with some technical changes.
\end{remark}
Now using the above trace formula and Lemma \ref{eqn:lim+same+D=D_m}, we prove Proposition \ref{thm:jc_D+P}.
\begin{proof} [Proof of Proposition \ref{thm:jc_D+P}]
	We first consider 
 %deal with the monomial of the form 
 the monomial $D_n^p$. To find 
 %the limit of 
 $ \lim_{n\to \infty}\vp_n( D_{n}^p)$, we first consider the truncated matrix $D_{n,m}$, given in (\ref{def:D_n,m}) and look at its convergence. %This   notion shall  be used later also. Let $D_n=((d_{i-j}))$ be the deterministic Toeplitz matrix. For a fixed $m \in \mathbb{N}$, we define a new matrix $D_{n,m}$ whose $(i,j)$-th entry is:
  %the following:
%\begin{equation*} % \label{def:D_n,m}
%	D_{n,m}(i,j) := d_{i-j} \chi_{[0,m]} (|i-j|).
%\end{equation*}
%\noindent \textbf{Joint convergence of $D_n$:} 
%We first look at the convergence of $D_{n,m}$.
 From Lemma \ref{lem:tracetoeplitz}, we have
\begin{align*}
 \vp_n( (D_{n,m})^p) &= \sum_{i_1, \ldots, i_p = -(m-1)}^ {m-1} d_{i_1}\cdots d_{i_p} \times  \frac{1}{n}\sum_{j=1}^n\prod_{\ell=1}^p
\chi_{[1,n]}(j+\sum_{t=\ell}^p i_t) \d_{0,\sum_{t=1}^pi_t} \nonumber \\
& = \sum_{i_1, \ldots, i_p = -(m-1)}^ {m-1} f(i_1, \ldots, i_p) g(i_1, \ldots, i_p,n), \mbox{ say},
\end{align*}
where $f(i_1, \ldots, i_p)=d_{i_1}\cdots d_{i_p}$.
%\begin{align} \label{eqn:I_p,m}
%	I_{p,m}&=\{(i_1,\ldots, i_p)\;:\; i_1,\ldots , i_p\in \{-(m-1),\ldots, -1, 0, 1,\ldots, m-1\}\}.
%\end{align}
Note that 
%for a fixed $(i_1,\ldots, i_p)$, $f(i_1, \ldots, i_p)$ does not depend on $n$ and 
$|g(i_1, \ldots, i_p,n)| \leq 1$ and
\begin{align} \label{eqn:lim_g_m}
\lim_{n \to \infty} g(i_1, \ldots, i_p,n)
	=\l\{\begin{array}{lll}
	1 & \mbox{if } \sum_{t=1}^pi_t=0, \\
		0 & \mbox{if } \sum_{t=1}^pi_t \neq 0.
	\end{array}\r.
\end{align}
Thus, from the above observations, for fixed $m$, we have 
\begin{align}\label{eqn:lim_Dm}
\lim_{n \to \infty}  \vp_n(( D_{n,m})^p) =  \sum_{i_1, \ldots, i_p=-(m-1)}^{(m-1)}d_{i_1}\cdots d_{i_{p-1}} d_{i_p} \d_{0,\sum_{t=1}^pi_t}.	
\end{align}

Now observe that if we take $m=n^\alpha$ with $\alpha \in (0,1)$, then (\ref{eqn:lim_g_m}) is still true. Let $\{d_i\}$ satisfy Assumption \ref{assump:determin}. Then using the Dominated convergence theorem, we have
\begin{align*} % \label{eqn:lim_D_n^p}
 \lim_{n \to \infty}  \vp_n(( D_{n,n^\alpha})^p )
	= \sum_{i_1, \ldots, i_{p}=-\infty }^{\infty} d_{i_1}\cdots d_{i_{p-1}} d_{i_p} \d_{0,\sum_{t=1}^pi_t}.
	%\sum_{i_1, \ldots, i_{p-1}=-(\infty) }^{\infty}d_{i_1}\cdots d_{i_{p-1}} d_{-(i_1 + \cdots + i_{p-1})}
\end{align*}
Note that the existence of the above limit is guaranteed because 
%due to the fact that 
$\sum_{t= -\infty}^{\infty} |d_t| < \infty$.
%is finite.

%Now we find out the limit of $\vp_n( D_{n}^p)$. Using the structure of the matrix $D_{n,n^\alpha}$ and $D_n$, it easily follows that $\lim_{n \to \infty}  \vp_n( D_{n}^p) = \lim_{n \to \infty}  \vp_n(( D_{n,n^\alpha})^p)$. Hence from the above expression, we have
Now using Lemma \ref{lem:lim,D_Dn,m}, we have 
%the limit of $\vp_n( D_{n}^p)$ 
\begin{align} \label{eqn:lim_D_n^p}
\lim_{n \to \infty}  \vp_n( D_{n}^p) = \lim_{n \to \infty}  \vp_n(( D_{n,n^\alpha})^p) 
= \sum_{i_1, \ldots, i_{p}=-\infty }^{\infty} d_{i_1}\cdots d_{i_{p-1}} d_{i_p} \d_{0,\sum_{t=1}^pi_t}.
%\sum_{i_1, \ldots, i_{p-1}=-(\infty) }^{\infty}d_{i_1}\cdots d_{i_{p-1}} d_{-(i_1 + \cdots + i_{p-1})}
\end{align}

%By the  similar way, 
Similarly, we can also show that for $\e_i \in \{1,*\}$ and $\tau_i \in \{1, \ldots,m\}$,
\begin{align} \label{eqn:lim_D*_n^p}
	\lim_{n \to \infty}  \vp_n( D^{(\tau_1)\e_1}_{n} \cdots D^{(\tau_p)\e_p}_{n}) = \sum_{i_1, \ldots, i_{p}=-\infty }^{\infty} d^{(\tau_1)\e_1}_{i_1}\cdots d^{(\tau_p) \e_{p}}_{i_{p}}  \d_{0,\sum_{t=1}^p \e'_ti_t}.
\end{align}

Now we deal with an arbitrary monomial from the collection $\{P_n, D_{n}^{(\tau)}; 1 \leq \tau \leq m\}$. Note from the structure of $P_n$ that any even power and any odd power of $P_n$ are, respectively, $I_n$ and  $P_n$. Also observe that for any matrices $A_n, B_ n$; $\Tr(A_nB_n)= \Tr(B_n A_n)$. Therefore for $\e_i \in \{ 1,*\}$ and $\tau_i \in \{1, \ldots, m\}$,	it is sufficient to check the convergence of  the following monomial from the collection $\{P_n, D_{n}^{(\tau)}; 1 \leq \tau \leq m\}$:
$$\big[(P_nD_{n}^{(\tau_1)\e_1} \cdots D_{n}^{(\tau_{k_1})\e_{k_1}})  \cdots (P_nD_{n}^{(\tau_{k_{p-1}+1})\e_{k_{p-1}+1}} \cdots D_{n}^{(\tau_{k_p})\e_{k_p}})\big]=	q_{k_1, \ldots, k_p}(\e, \tau), \mbox{ say},$$
where $0\leq k_1 \leq k_2 \leq \cdots \leq k_p$.
First, let $p$ be an odd positive integer. Then observe from  Lemma \ref{lem:hankel} that
\begin{align*}
	\vp_n\big(	q_{k_1, \ldots, k_p}(\e, \tau)\big)  
	&=  \sum_{I_{k_p}} \prod_{t=1}^{k_p} d^{(\tau_{t})\e_t}_{i_t} \times \frac{1}{n} \sum_{j=1}^n \prod_{e=1}^p m_{\underline k, e}(j,i)   \d_{2j-1-n, \sum_{c=1}^p(-1)^{p-c} \sum_{t=k_{c-1}+1}^{k_{c}}  \e_t'i_t},
\end{align*}
where $I_{k_p}$ and $m_{\underline k, e}(j,i)$ are as in (\ref{eqn:i_k in -n to n})  and (\ref{eqn:m_chi,t_Hn}), respectively.
 Now for a fixed value of $i_1, \ldots, i_{k_p}$, note from (\ref{eqn:m_chi,t_Hn}) that $|m_{\underline k, e}(j,i) | \leq 1$ and thus
 \begin{align*}
 &	|\frac{1}{n} \sum_{j=1}^n \prod_{e=1}^p m_{\underline k, e}(j,i)   \d_{2j-1-n, \sum_{c=1}^p(-1)^{p-c} \sum_{t=k_{c-1}+1}^{k_{c}} \e_t'i_t}|  \\
 	&\leq \frac{1}{n}\sum_{j=1}^n  \d_{2j-1-n,\sum_{c=1}^p(-1)^{p-c} \sum_{t=k_{c-1}+1}^{k_{c}}  \e_t'i_t} = O(\frac{1}{n}).
 \end{align*}
Also observe that $\sum_{I_{k_p}} \prod_{t=1}^{k_p} d^{(\tau_{t})\e_t}_{i_t} < \infty$ from Assumption \ref{assump:determin}. Hence for odd values of $p$, %the limit of $\vp_n\big(q_{k_1, \ldots, k_{p}}(\e)\big)$ will be zero as $n \to \infty$.
\begin{equation}\label{eqn:phi_PD_o(1)odd}
	\vp_n\big(q_{k_1, \ldots, k_{p}}(\e, \tau)\big) =o(1).
\end{equation}

 Now suppose $p$ is even. Then from Lemma \ref{lem:hankel}, we have
 \begin{align*}
 	\vp_n\big(	q_{k_1, \ldots, k_{p}}(\e, \tau)\big)  
 	&=  \sum_{I_{k_p}} \prod_{t=1}^{k_p} d^{(\tau_{t})\e_t}_{i_t} \times \frac{1}{n} \sum_{j=1}^n \prod_{e=1}^p m_{\underline k, e}(j,i)   \d_{0, \sum_{c=1}^p(-1)^{p-c} \sum_{t=k_{c-1}+1}^{k_{c}}  \e_t'i_t}.
 \end{align*}
Let $q'_{k_1, \ldots, k_{p}}(\e, \tau)$ be a monomial which is obtained by replacing $D_n$ by $D_{n, n^\alpha}$ in $q_{k_1, \ldots, k_{p}}(\e, \tau)$, where $D_{n, n^\alpha}$ is as in (\ref{def:D_n,m}) with $\alpha \in (0,1)$. 
Using 
%From the 
arguments similar to  those used while establishing (\ref{eqn:lim_D_n^p}), we have
\begin{align*} %\label{eqn:lim_phi_P,D*}
	\lim_{n \to \infty}  \vp_n\big(	q'_{k_1, \ldots, k_{p}}(\e, \tau)\big) 
	= \sum_{i_1, \ldots, i_{k_p}=-\infty }^{\infty}  \prod_{t=1}^{k_p} d^{(\tau_{t})\e_t}_{i_t} \d_{0, \sum_{c=1}^p(-1)^{p-c} \sum_{t=k_{c-1}+1}^{k_{c}}  \e_t'i_t}.
	%\sum_{i_1, \ldots, i_{p-1}=-(\infty) }^{\infty}d_{i_1}\cdots d_{i_{p-1}} d_{-(i_1 + \cdots + i_{p-1})}
\end{align*}
But from Remark \ref{rem:obse_D,D_m}, $	\lim_{n \to \infty}  \vp_n\big(	q'_{k_1, \ldots, k_{p}}(\e, \tau)\big) = 	\lim_{n \to \infty}  \vp_n\big(	q_{k_1, \ldots, k_{p}}(\e, \tau)\big)$. Hence
	\begin{align}\label{eqn:lim_phi_P,D*}
	&	\lim_{n \to \infty}  \vp_n \big[(P_nD_{n}^{(\tau_1)\e_1} \cdots D_{n}^{(\tau_{k_1})\e_{k_1}})  \cdots (P_nD_{n}^{(\tau_{k_{p-1}+1})\e_{k_{p-1}+1}} \cdots D_{n}^{(\tau_{k_p})\e_{k_p}})\big] \nonumber \\
	&	= 
	\l\{\begin{array}{lll}
		\displaystyle \sum_{i_1, \ldots, i_{k_p}=-\infty }^{\infty}  \prod_{t=1}^{k_p} d^{(\tau_{t})\e_t}_{i_t} \d_{0, \sum_{c=1}^p(-1)^{p-c} \sum_{t=k_{c-1}+1}^{k_{c}}  \e_t'i_t} &  \mbox{ if $p$ is even}, \\
		0&   \mbox{ if $p$ is odd}.
	\end{array}\r.
\end{align}
This completes the proof of Proposition \ref{thm:jc_D+P}.
\end{proof}
%\begin{remark} \label{rem:obse_D,D_m}
%Let $\{M^{(i)}_n; 1 \leq i \leq \ell\}$ be any given $\ell$ matrices where $M^{(i)}_n$ is $P_n$ or $n^{-1/2}T_n$. Let the input entries of $T_n$  and $D_n$  satisfy Assumptions \ref{assump:toeI} and \ref{assump:determin}, respectively.Suppose $Q(M_n,D_n)$ is any monomial in $\{M^{(i)}_n; 1 \leq i \leq \ell\}$ and $D_n$, and  $Q(M_n,D_{n, n^\alpha})$ is the same monomial obtained by replacing $D_n$ by $D_{n, n^\alpha}$ in $Q(M_n,D_n)$.	Then from a similar argument that was used to establish (\ref{eqn:lim_D_n^p}), we have $\lim_{n \to \infty}  \vp_n\big(Q(M_n,D_n)\big) = \lim_{n \to \infty}  \vp_n\big(Q(M_n,D_{n, n^\alpha})\big)$. 
%\end{remark}

\subsection{Joint convergence of \texorpdfstring{$T_n$ and  $D_{n}$}{tndn}} \label{subsec:td}
%This section is dedicated to the study of the joint convergence of $T_n$ and  $D_{n}$.
\begin{proposition} \label{thm:jc_T+D}
Let $A_{n,1}=n^{-1/2}T_n$ where $T_n$ is a random Toeplitz matrix.  Let $\{A_{n,1}^{(i)}\}$, $i =1,2,\ldots, m$, be independent copies of $A_{n,1}$. Let $A_{n,2}^{(i)}=D^{(i)}_n, 1 \leq i \leq m$ be $m$ deterministic Toeplitz matrices. Suppose the input entries of $T_n$  and $D^{(i)}_n$  satisfy Assumptions \ref{assump:toeI} and \ref{assump:determin}, respectively.
 Then $\{A_{n,j}^{(i)}; 1\leq i\leq m, 1\leq j \leq 2\}$ converge jointly.
%  If $\{T_{n}^{(i)}, 1\leq i\leq m\}$ are $m$ independent Toeplitz matrices whose entries satisfy Assumption I and $\{D_{n}^{(\tau)}, 1\leq \tau \leq \ell\}$ are $\ell$ deterministic Toeplit matrices whose entries satisfy Assumption III, then $\{D_{n}^{(\tau)}, n^{-1/2}T_{n}^{(i)}, 1 \leq i \leq m, 1\leq \tau \leq \ell\}$ converge jointly. 
	The limit $*$-moments are as given in (\ref{eqn:lim_ph(Dm*,T*)}).
\end{proposition}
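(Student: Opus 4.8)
The plan is to mirror the two pure cases already settled, namely Proposition \ref{thm:toeplitz} for products of scaled random Toeplitz matrices and equation (\ref{eqn:lim_D*_n^p}) for products of deterministic Toeplitz matrices. By multilinearity it suffices to compute $\lim_{n\to\infty}\vp_n(W_n)$ for an arbitrary monomial $$W_n=M_1^{\e_1}\cdots M_q^{\e_q},$$ where each $M_s$ is either a scaled random factor $n^{-1/2}T_n^{(\tau_s)}$ or a deterministic factor $D_n^{(\tau_s)}$; say $p$ of the $q$ factors are random. The first move is to invoke Remark \ref{rem:obse_D,D_m} to replace every deterministic factor by its truncation $D_{n,n^\alpha}$ from (\ref{def:D_n,m}), with a fixed $\alpha\in(0,1)$. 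This leaves the limit unchanged, and it confines every deterministic index to $|i|\le n^\alpha-1=o(n)$, a feature that is used decisively below.

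Next I would apply the trace formula of Lemma \ref{lem:tracetoeplitz}(b) to $W_n$, absorbing the $n^{-p/2}$ from the random factors, so that $\vp_n(W_n)$ becomes $n^{-p/2-1}$ times a sum over $j\in[n]$ and over index tuples of a product of entries, the $\chi_{[1,n]}$ factors, and the global constraint $\d_{0,\sum_t\e_t'i_t}$. Taking expectation acts only on the random entries (the deterministic ones factor out), and there the analysis of Proposition \ref{thm:toeplitz} applies verbatim to the random positions: since the random entries are centered and obey Assumption \ref{assump:toeI}, only configurations in which the random indices pair \emph{among themselves} survive, each surviving pair $(r,s)$ contributing $\d_{\tau_r,\tau_s}\theta(r,s)$ exactly as in (\ref{eqn:E[ars_epsilon]}). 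In particular an odd number $p$ of random factors forces the limit to be $0$, so I may assume $p=2k$.

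The heart of the argument is a degree-of-freedom count that decouples the two kinds of indices. Because $\{d_i\}$ is absolutely summable (Assumption \ref{assump:determin}), each deterministic index contributes only a convergent $O(1)$ sum and never a power of $n$; hence the only powers of $n$ come from the $j$-sum and the $k$ free random indices, giving $n^{k+1}$, which cancels $n^{p/2+1}=n^{k+1}$. As in Proposition \ref{thm:toeplitz}, the leading (non-degenerate) configurations are exactly those for which each random pair satisfies $\e_r'i_r+\e_s'i_s=0$; consequently $\sum_{\text{random }t}\e_t'i_t=0$ automatically, and the global constraint collapses to one on the deterministic indices alone, $\sum_{\text{deterministic }t}\e_t'i_t=0$. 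Simultaneously, since truncation makes each deterministic $i_t$ of order $o(n)$, after dividing the arguments of the $\chi_{[1,n]}$ by $n$ these indices vanish and drop out, so the remaining Riemann sum over $j/n$ and the scaled free random indices converges to precisely the integral in (\ref{eqn:lim_phi(T*tau)}).

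Assembling the pieces, the limit factorizes: the random pairings together with the integral reproduce the pure random moment of Proposition \ref{thm:toeplitz} for the sub-monomial of random factors, while the constrained deterministic sum reproduces the pure deterministic moment of (\ref{eqn:lim_D*_n^p}) for the sub-monomial of deterministic factors, giving the asserted formula (\ref{eqn:lim_ph(Dm*,T*)}) and hence the joint convergence of $\{A_{n,j}^{(i)}\}$. I expect the delicate point to be the bookkeeping that legitimizes this decoupling when the random and deterministic factors are interleaved in arbitrary order: one must verify (i) that no genuine random--deterministic pairing can contribute, (ii) that the interspersed deterministic factors do not inflate the order in $n$, and (iii) that, after truncation, the deterministic positions generate only redundant $\chi_{[1,n]}$ constraints, so that the limiting integral is exactly the one for the random sub-monomial rather than a quantity coupling the two families.
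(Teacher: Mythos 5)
Your proposal is correct and takes essentially the same route as the paper's own proof: truncation of the deterministic factors via Remark \ref{rem:obse_D,D_m}, the trace formula of Lemma \ref{lem:tracetoeplitz}, pair matching confined to the random indices (an odd number of random factors giving zero), and the key observation, the paper's (\ref{eqn:chi_D_remove}), that after truncation the deterministic indices of order $o(n)$ drop out of the $\chi_{[1,n]}$ constraints, so the limit factorizes into the pure deterministic moment (\ref{eqn:lim_D*_n^p}) times the pure random moment of Proposition \ref{thm:toeplitz}, exactly as in (\ref{eqn:lim_ph(Dm*,T*)}). The interleaving bookkeeping you flag as delicate is treated in the paper at the same level of detail (it first handles the block monomial $D_n^p(n^{-1/2}T_n)^q$ and then asserts the general arbitrarily ordered case ``using similar calculation''), so your plan matches the published argument in both substance and structure.
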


\begin{proof}
First we deal with the monomial of the form $(D_{n}^p (n^{-1/2}T_n)^q)$. Recall the definition of $D_{n,m}$ from (\ref{def:D_n,m}).  Observe from Remark \ref{rem:obse_D,D_m} that to find the limit of $\vp_n(D_{n}^p (n^{-1/2}T_n)^q)$, it is sufficient to find the limit of  $\vp_n(D_{n, n^\alpha}^p (n^{-1/2}T_n)^q)$, where $\alpha \in (0,1)$, and both the limits will be the same. Now
from Lemma \ref{lem:tracetoeplitz}, we have
\begin{align} \label{eqn:ph(Dm,T)}
	&\vp_n(D_{n, n^\alpha}^p (n^{-1/2}T_n)^q)  \nonumber \\
	&= \hskip-5pt \sum_{i_1, \ldots, i_p=-(n^\alpha-1)}^{n^\alpha-1} \prod_{t=1}^{p} d_{i_t} \times  \frac{1}{n^{\frac{q}{2}}} \sum_{i_{p+1}, \ldots, i_{p+q}=-(n-1)}^{n-1} \hskip-5pt \E\big[ \prod_{t=p+1}^{p+q} a_{i_{t}}\big]  \frac{1}{n}\sum_{j=1}^n\prod_{\ell=1}^{p+q}
	\chi_{[1,n]}(j+\sum_{t=\ell}^{p+q} i_t) \d_{0,\sum_{t=1}^{p+q} i_t}.
	%\nonumber \\
	%& = f(i_1, \ldots, i_p) g(i_1, \ldots, i_p,n), \mbox{ say},
\end{align}
Observe that 
$$ |\frac{1}{n}\sum_{j=1}^n\prod_{\ell=1}^{p+q}
\chi_{[1,n]}(j+\sum_{t=\ell}^{p+q} i_t)| \leq 1 \mbox{ and }  \sum_{i_1, \ldots, i_p=-\infty}^{\infty} \prod_{t=1}^{p} d_{i_t} < \infty.$$
Thus $\vp_n(D_{n, n^\alpha}^p (n^{-1/2}T_n)^q)$ is $O(1)$ iff  for fixed values of $i_1, \ldots, i_p$,
$$ \frac{1}{n^{\frac{q}{2}}} \sum_{i_{p+1}, \ldots, i_{p+q}=-(n-1)}^{n-1} \E \big[\prod_{t=p+1}^{p+q} a_{i_{t}}\big]   \d_{0,\sum_{t=1}^{p+q} i_t} =O(1).$$

Now from the previous arguments as used in the proof of Proposition \ref{thm:toeplitz}, the above expression is true iff $q$ is even and $(i_{p+1}, \ldots, i_{p+q})$ is pair-matched with $\sum_{t=p+1}^{p+q} i_t=0$. Thus (\ref{eqn:ph(Dm,T)}) becomes
\begin{align*} %\label{eqn:ph(Dm,T)}
	&\vp_n(D_{n, n^\alpha}^p (n^{-1/2}T_n)^q) \\
	&= \sum_{i_1, \ldots, i_p=-(n^\alpha-1)}^{n^\alpha-1} \prod_{t=1}^{p} d_{i_t} \d_{0,\sum_{t=1}^{p} i_t} \nonumber \\
	& \quad \times  \frac{1}{n^{\frac{q}{2}}} \sum_{i_{p+1}, \ldots, i_{p+q}=-(n-1)}^{n-1} \hskip-5pt \E\big[ \prod_{t=p+1}^{p+q} a_{i_{t}}\big]  \frac{1}{n}\sum_{j=1}^n\prod_{\ell=1}^{p+q}
	\chi_{[1,n]}(j+\sum_{t=\ell}^{p+q} i_t) \d_{0,\sum_{t=p+1}^{p+q} i_t} +o(1).
	%\nonumber \\
	%& = f(i_1, \ldots, i_p) g(i_1, \ldots, i_p,n), \mbox{ say},
\end{align*}
Since $i_1, \ldots, i_p \in (-(n^\alpha-1), (n^\alpha-1))$ with $\alpha \in (0,1)$, we have
\begin{equation} \label{eqn:chi_D_remove}
 \frac{1}{n}\sum_{j=1}^n\prod_{\ell=1}^{p+q}
 \chi_{[1,n]}(j+\sum_{t=\ell}^{p+q} i_t) = \frac{1}{n}\sum_{j=1}^n\prod_{\ell=p+1}^{p+q}
 \chi_{[1,n]}(j+\sum_{t=\ell}^{p+q} i_t) +o(1).
\end{equation}
Hence 
\begin{align} \label{eqn:lim_ph(Dm,T)}
	& \lim_{n \to \infty} \vp_n(D_{n, n^\alpha}^p (n^{-1/2}T_n)^q) \nonumber \\ 
	&= \sum_{i_1, \ldots, i_p=-\infty}^{\infty} \prod_{t=1}^{p} d_{i_t} \d_{0,\sum_{t=1}^{p} i_t} \nonumber \\ 
	& \qquad \times  \lim_{n \to \infty} \frac{1}{n^{\frac{q}{2}}} \sum_{i_{p+1}, \ldots, i_{p+q}=-(n-1)}^{n-1} \E\big[ \prod_{t=p+1}^{p+q} a_{i_{t}}\big]  \frac{1}{n}\sum_{j=1}^n\prod_{\ell=p+1}^{p+q}
	\chi_{[1,n]}(j+\sum_{t=\ell}^{p+q} i_t) \d_{0,\sum_{t=p+1}^{p+q} i_t} 
	\nonumber \\
	& = \lim_{n \to \infty}  \vp_n( D_{n}^p )  \times  \lim_{n \to \infty}  \vp_n( (n^{-1/2}T_n)^q),
\end{align}
where the existence of the first limit is given in (\ref{eqn:lim_D_n^p}) and the second limit in the above equation is guaranteed by  Proposition \ref{thm:toeplitz} with the limit value as in (\ref{eqn:lim_mome_A1_TJc}).

Now for $\e_i \in \{1, *\}$ and $\tau_i \in \{1,2, \ldots, m\}$, we calculate the limit for the monomial $q_{ k}(\e, \tau) = D_{n}^{(\tau_1) \e_1} \cdots D_{n}^{(\tau_p) \e_p} (n^{-1/2} T_{n}^{(\tau_{p+1}) \e_{p+1}}) \cdots( n^{-1/2} T_{n}^{(\tau_k) \e_k})$.
 Note from Remark \ref{rem:obse_D,D_m} that the limit of $\vp_n(q_{ k}(\e, \tau))$ will be the same as $\vp_n(q'_{ k}(\e, \tau))$, where $q'_{k}(\e, \tau)$ is the monomial obtained by replacing $D_{n}$ by $D_{n,n^\alpha}$ in $q_{ k}(\e, \tau)$. Now from Lemma \ref{lem:tracetoeplitz}, we have
\begin{align*} %\label{eqn:ph(Dm*,T*)}
	\vp_n(q'_{k}(\e, \tau)) 
	&= \sum_{i_{1}, \ldots, i_{p}=-(n^\alpha-1)}^{n^\alpha-1} \prod_{t=1}^{p} d^{(\tau_t) \e_t}_{i_t}  \nonumber \\
	& \times  \frac{1}{n^{\frac{k-p}{2}}} \sum_{i_{p+1}, \ldots, i_{k}=-(n-1)}^{n-1} \E\big[ \prod_{t=p+1}^{k} a^{(\tau_t) \e_t}_{i_{t}}\big]  \frac{1}{n}\sum_{j=1}^n\prod_{\ell=1}^{k}
	\chi_{[1,n]}(j+\sum_{t=\ell}^{k} \e'_t i_t) \d_{0,\sum_{t=1}^{k} \e'_t i_t}.
	%\nonumber \\
	%& = f(i_1, \ldots, i_p) g(i_1, \ldots, i_p,n), \mbox{ say},
\end{align*}
Following similar arguments to those used to establish (\ref{eqn:lim_ph(Dm,T)}), we get
\begin{align*} % \label{eqn:lim_ph(Dm*,T*)}
	& \lim_{n \to \infty} \vp_n(q_{k}(\e, \tau))  \nonumber \\
	%&= \sum_{i_1, \ldots, i_p=-\infty}^{\infty} \prod_{t=1}^{p} d_{i_t} \d_{0,\sum_{t=1}^{p} i_t} \times  \lim_{n \to \infty} \frac{1}{n^{\frac{q}{2}}} \sum_{i_{p+1}, \ldots, i_{p+q}=-(n-1)}^{n-1} \prod_{t=p+1}^{p+q} \E[a_{i_{t}}]  \frac{1}{n}\sum_{j=1}^n\prod_{\ell=p+1}^{p+q}
	%\chi_{[1,n]}(j+\sum_{t=\ell}^{p+q} i_t) \d_{0,\sum_{t=p+1}^{p+q} i_t} 
	%\nonumber \\
	& = \lim_{n \to \infty}  \vp_n(D_{n}^{(\tau_1)\e_1} \cdots D_{n}^{(\tau_{p})\e_{p}}) \times  \lim_{n \to \infty}  \vp_n \big( (n^{-1/2}T^{(\tau_{p+1}) \e_{p+1}}_n) \cdots (n^{-1/2}T^{(\tau_{k}) \e_{k}}_n) \big).
\end{align*}
Here the existence of the first limit is given in (\ref{eqn:lim_D*_n^p}) and the second limit is guaranteed by Proposition \ref{thm:toeplitz} with the limit value as in (\ref{eqn:lim_phi(T*tau)}).

Now for $\mu_i \in \{1,2\}$, an arbitrary monomial from collection $\{T_{n}^{(i)}, D_{n}^{(i)}; 1\leq i\leq m\}$ looks like $A_{n,\mu_1}^{(\tau_1) \e_1} A_{n,\mu_2} ^{(\tau_2) \e_2} \cdots A_{n,\mu_k}^{(\tau_k) \e_k}$, where $A^{(\tau_t) \e_t}_{n,1}=n^{-1/2}T^{(\tau_t) \e_t}_n$ and $A^{(\tau_t) \e_t}_{n,2}=D^{(\tau_t) \e_t}_n$. Using similar calculation as used above, we have
\begin{align} \label{eqn:lim_ph(Dm*,T*)}
	&\lim_{n \to \infty} \vp_n \big(A_{n,\mu_1}^{(\tau_1) \e_1} A_{n,\mu_2} ^{(\tau_2) \e_2} \cdots A_{n,\mu_k}^{(\tau_k) \e_k} \big)   \nonumber \\
	& =	\lim_{n \to \infty} \vp_n \big( \prod_{t \in \{v_1, v_2, \ldots, v_p\}} D_{n}^{(\tau_t) \e_t}  \big) \times 	\lim_{n \to \infty} \vp_n \big(\prod_{t \in \{[k] \setminus \{v_1, v_2, \ldots, v_p\} \}} \frac{T_{n}^{(\tau_t) \e_t}}{\sqrt{n}} \big),
\end{align}
where $ \{v_1, v_2, \ldots, v_p\}$ are the indices corresponding to the positions of $D_n$ in the monomial $ A_{n,\mu_1}^{(\tau_1) \e_1} A_{n,\mu_2} ^{(\tau_2) \e_2} \cdots A_{n,\mu_k}^{(\tau_k) \e_k}$. Here the  first limit is of the form (\ref{eqn:lim_D*_n^p}) and the second limit is of the form (\ref{eqn:lim_phi(T*tau)}).
This completes the proof of Proposition \ref{thm:jc_T+D}.
\end{proof}

%\begin{remark}\label{rem:obs_D_nT_n+pos}
%	Observe from the above calculation that for any monomial $q(D_n,T_n)$, $\lim_{n \to \infty} \vp_n(q(D_n,T_n))$ does not depend on the positions of $D_n$ and $T_n$. It only depends on the number of times $D_n$ and $T_n$ are appearing in $q(D_n,T_n)$.
%\end{remark}

\subsection{Joint convergence of \texorpdfstring{$T_n$ and  $P_{n}$}{tnpn}} \label{subsec:Tp}
%In  this section, 
We now focus on the joint convergences of $T_n$ and $P_n$, and 
%. We also discuss the joint convergence 
of symmetric Hankel matrices.
\begin{proposition} \label{thm:jc_T+P}
	If $\{T_{n}^{(i)}; 1\leq i\leq m\}$ are $m$ independent copies of random Toeplitz matrices whose input entries satisfy Assumption \ref{assump:toeI}, then $\{P_n, n^{-1/2}T_{n}^{(i)}; 1 \leq i \leq m\}$ converge jointly. 
	The limit $*$-moments are as given in (\ref{eqn:lim_phi(Te*,P)}).
\end{proposition}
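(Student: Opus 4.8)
The plan is to follow the proof of Proposition \ref{thm:toeplitz} closely, replacing the trace formula of Lemma \ref{lem:tracetoeplitz} by that of Lemma \ref{lem:hankel} in order to accommodate the factors of $P_n$. First I would use $P_n^2=I_n$ (so even powers of $P_n$ equal $I_n$ and odd powers equal $P_n$) together with the cyclicity of the trace, $\Tr(AB)=\Tr(BA)$, to reduce an arbitrary monomial in $\{P_n, n^{-1/2}T_n^{(i)}\}$ to the alternating block form
\[
(P_n T_n^{(\tau_1)\e_1}\cdots T_n^{(\tau_{k_1})\e_{k_1}})\cdots (P_n T_n^{(\tau_{k_{p-1}+1})\e_{k_{p-1}+1}}\cdots T_n^{(\tau_{k_p})\e_{k_p}}),
\]
to which Lemma \ref{lem:hankel} applies; here $p$ denotes the number of $P_n$ factors, and both cyclicity and the relation $P_n^2=I_n$ leave this count unchanged modulo $2$. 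It therefore suffices to compute $\lim_{n\to\infty}\vp_n$ of such a monomial, carrying the appropriate $n^{-1/2}$ scaling on each Toeplitz factor.

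Next I would insert Lemma \ref{lem:hankel} and take expectations. Because the entries have mean zero and uniformly bounded moments (Assumption \ref{assump:toeI}), the free-index counting used in Proposition \ref{thm:toeplitz} shows that only pair partitions of $\{i_1,\ldots,i_{k_p}\}$ survive in the limit; in particular $k_p$ must be even. The total scaling is $n^{-k_p/2-1}$, combining the $n^{-1/2}$ per Toeplitz factor with the $n^{-1}$ coming from $\vp_n$.

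The dichotomy between even and odd $p$ is the crux. For odd $p$ the trace formula carries the factor $\d_{2j-1-n,\,\cdots}$, which pins $j$ to a single admissible value once the indices $\{i_t\}$ are fixed, thereby removing the free summation over $j$; together with the pair constraint this makes the contribution $O(n^{-1})$, hence $0$ in the limit. For even $p$ the factor is $\d_{0,\,\cdots}$, which does not involve $j$, so $j$ stays free and supplies the extra factor of $n$ needed for an $O(1)$ limit. For even $p$ I would then repeat the reduction of Proposition \ref{thm:toeplitz}: writing the delta constraint as $\sum_t \eta_t i_t = 0$ with $\eta_t = (-1)^{p-c(t)}\e'_t$, where $c(t)$ is the block containing position $t$, the number of free indices equals $k_p/2$ exactly when each pair $(r_t,s_t)$ satisfies $\eta_{r_t}i_{r_t}+\eta_{s_t}i_{s_t}=0$ on its own. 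This forces $i_{r_t}=\pm i_{s_t}$ and produces precisely the two-point correlation $\theta(r_t,s_t)$ of Proposition \ref{thm:toeplitz}. Bounding $|m_{\underline k,e}(j,i)|\le 1$ and passing to Riemann integrals then yields the limit as a sum over $\pi\in\mathcal P_2(k_p)$ of $\prod_{(r,s)\in\pi}\d_{\tau_r,\tau_s}\theta(r,s)$ times an integral over $[0,1]\times[-1,1]^{k_p/2}$ of a product of indicator functions encoding the block structure through the signs $\eta_t$, giving the closed form (\ref{eqn:lim_phi(Te*,P)}).

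The main obstacle I anticipate is the sign bookkeeping introduced by the $P_n$ factors: each $P_n$ reverses the orientation of the index propagation, so the linear forms inside the indicators $m_{\underline k,e}(j,i)$ mix the $\e'_t$ with the block-dependent signs $(-1)^{p-e}$. Verifying that the free-index count is exactly $k_p/2$ precisely when each pair satisfies its own constraint — that is, that no cross-block cancellation can manufacture a spurious extra degree of freedom — requires careful tracking of these signs, even though it is structurally the same degree-of-freedom argument as in Proposition \ref{thm:toeplitz}.
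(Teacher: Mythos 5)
Your skeleton is the paper's route: reduce an arbitrary monomial via $P_n^2=I_n$ and traciality to the alternating block form, apply the trace formula of Lemma \ref{lem:hankel}, kill odd $p$ because $\delta_{2j-1-n,\,\cdot}$ pins $j$ and removes the free $j$-summation, keep only pair partitions with exactly $k_p/2$ free indices, and pass to Riemann integrals. All of that matches the paper's proof of Proposition \ref{thm:jc_T+P}. But there is a genuine error at the step where you evaluate the surviving pair expectations: you claim each pair "produces precisely the two-point correlation $\theta(r_t,s_t)$ of Proposition \ref{thm:toeplitz}." That is false once $P_n$ factors are present. In Proposition \ref{thm:toeplitz}, the constraint $\epsilon'_r i_r+\epsilon'_s i_s=0$ forces $i_r=i_s$ exactly when $\epsilon'_r\epsilon'_s=-1$ (expectation $\E|a_i|^2=\sigma_x^2+\sigma_y^2$) and $i_r=-i_s$ exactly when $\epsilon'_r\epsilon'_s=1$, and in both cases the value is independent of the sign of $i_r$. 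Here the pair constraint is $\nu_c\epsilon'_r i_r+\nu_{c'}\epsilon'_s i_s=0$, with $\nu_c=\pm1$ recording the parity of the $P$-block containing each position. When the two positions lie in blocks of opposite parity ($\nu_c\nu_{c'}=-1$), you get matchings $i_r=i_s$ with $\epsilon_r=\epsilon_s$, whose expectation is $\E[a_i^2]=\sigma_x^2-\sigma_y^2+2\mathrm{i}\rho_1$ for $i>0$ but $\sigma_x^2-\sigma_y^2+2\mathrm{i}\rho_6$ for $i<0$, and matchings $i_r=-i_s$ with $\epsilon_r\neq\epsilon_s$, whose expectation involves $\rho_2+\rho_5$ and $\rho_4-\rho_3$. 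None of these cases arises in the pure Toeplitz setting.

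The consequence is structural, not cosmetic: the pair weight depends on the block parities $\nu_c,\nu_{c'}$ \emph{and} on the sign of the matched index, so it cannot be pulled out of the integral as a constant prefactor $\theta(r,s)$. In the paper it becomes the $z$-dependent factor $\theta^{(T,P)}_{r,s}(\epsilon,\rho,z)$ of \eqref{eqn:EHs_ars_epsilon}, which sits \emph{inside} the integral in the limit formula \eqref{eqn:lim_phi(Te*,P)} (the domain $[-1,1]^k$ splits according to the signs of the $z_t$'s). Your asserted closed form would omit $\rho_1$ and $\rho_6$ entirely and misweight the cross-parity pairings, hence would give the wrong limit $*$-moments even though the degree-of-freedom count and the vanishing for odd $p$ (and odd $k_p$) are handled correctly. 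To repair the proof, redo the case analysis of $\E[a^{\epsilon_r}_{i_r}a^{\epsilon_s}_{i_s}]$ under the four combinations of $\delta_{\nu_c,\nu_{c'}}$ and $\delta_{\epsilon'_r,\epsilon'_s}$, keeping track of $\mathrm{sgn}(i_r)$, exactly as in \eqref{eqn:EHs_ars_epsilon}.
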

\begin{proof}
	[Proof of Proposition \ref{thm:jc_T+P}]
	The main idea of the proof is similar to the proof of Proposition \ref{thm:toeplitz}. For simplicity of notation, we first consider a single matrix. Note from the structure of $P_n$ and trace property, it is sufficient to deal with the following monomial from the collection $\{P_n,T_n\}$:
	\begin{equation*}
		(P_n \frac{T_{n}^{\e_1}}{\sqrt{n}} \cdots \frac{T_{n}^{\e_{k_1}}}{\sqrt{n}}) (P_n \frac{T_{n}^{\e_{k_1+1}}}{\sqrt{n}} \cdots \frac{T_{n}^{\e_{k_2}}}{\sqrt{n}} )  \cdots (P_n \frac{T_{n}^{\e_{k_{p-1}+1}}}{\sqrt{n}} \cdots \frac{T_{n}^{\e_{k_p}}}{\sqrt{n}} )= q_{k_1, \ldots, k_p}(\e), \mbox{  say},
	\end{equation*}
 where $0\leq k_1 \leq k_2 \leq \cdots \leq k_p$.
First suppose $p$ is odd. Then using arguments similar to those used while establishing (\ref{eqn:phi_PD_o(1)odd}), we get $\phi_n(q_{k_1, \ldots, k_p}(\e))= o(1)$. 

Now suppose $p$ is even. Again, from a similar argument that was used to establish (\ref{eqn:phi_To(1)odd}), we get $\phi_n(q_{k_1, \ldots, k_p}(\e))= o(1)$ when $k_p$ is odd. So let $k_p=2k$ and for $I_{2k}$ as in  (\ref{eqn:i_k in -n to n}), set
	\begin{equation*} %\label{eqn:I'2k_Hn}
		I_{2k}^{o}=\{(i_1,\ldots, i_{k_1}, i_{k_1+1}, \ldots, i_{k_2}, \ldots, i_{k_{p-1}+1}, \ldots, i_{2k})\in I_{2k}\; :\;\sum_{c=1}^{p}(-1)^c \sum_{t=k_{c-1}+1}^{k_c} \e'_t i_t=0  \}.
	\end{equation*}
From Lemma  \ref{lem:hankel}, we have
		\begin{align*} %\label{eqn:EHs_2k}
		\lim_{n \to \infty}	\vp_n\big(q_{k_1, \ldots, 2k}(\e)\big)
		&=\lim_{n \to \infty}\frac{1}{n^{k+1}}\sum_{j=1}^{n}\sum_{I_{2k}^{o}} \E[\prod_{t=1}^{2k} a^{\e_t}_{i_t}] \prod_{e=1}^{2k} m_{\underline k, e}(j,i),
	\end{align*}
	where $m_{\underline k, e}(j,i)$ is as in (\ref{eqn:m_chi,t_Hn}).
 As before, only the pair-partitions will contribute. %\textbf{I added the previous line, hope it is ok}  
	Hence %we have
	\begin{align}\label{eqn:EHs_2k}
		\lim_{n \to \infty}	\vp_n\big(q_{k_1, \ldots, 2k}(\e)\big)
		&=\sum_{\pi\in \mathcal P_{2}(2k)}\lim_{n\to \infty}\frac{1}{n^{k+1}}\sum_{j=1}^{n}\sum_{I_{2k}^{o}(\pi)}\prod_{(r,s)\in \pi}\E[a^{\e_r}_{i_r} a^{\e_s}_{i_s}] \prod_{e=1}^{2k} m_{\underline k, e}(j,i),
	\end{align}
where  $I_{2k}^{o}(\pi)=\{(i_1,\ldots, i_{2k})\in I_{2k}^{o} \; :\; \prod_{(r,s)\in \pi}\E[a^{\e_r}_{i_r} a^{\e_s}_{i_s}]\neq 0\}$.
	
	For a block $(r,s)\in \pi$,  let $r \in \{k_{c-1}, k_{c-1}+1, \ldots, k_{c}\}$ and $s \in \{k_{c'-1}, k_{c'-1}+1, \ldots, k_{c'}\}$ for some $c,c' \in \{1,2, \ldots,p\}$ with $k_0=1$ and $k_p=2k$. Observe that, the constraint on the indices, $\sum_{c=1}^{p}(-1)^c \sum_{t=k_{c-1}+1}^{k_c} \e'_t i_t$ can also be written as  $\sum_{c=1}^{p} \nu_c \sum_{t=k_{c-1}+1}^{k_c} \e'_t i_t$, where \begin{align}\label{eqn:nu_t_oddeven}
		\nu_c= 
		\l\{\begin{array}{lll}
			1 &  \mbox{ if $c$ is even}, \\
			-1&   \mbox{ if $c$ is odd}.
		\end{array}\r.
	\end{align}
Thus, in terms of a pair-partition, the constraint on the indices will be $\sum_{(r,s)\in \pi}(\nu_c \e'_ri_r+\nu_{c'} \e'_si_s)=0$. Here $i_r$ matches with $i_s$.  Observe that if $r,s \in \{k_d, k_d+1, \ldots, k_{d+1}\}$ for some $d=0,1, \ldots, (p-1)$, then the corresponding $\nu_c,\nu_{c'}$ have the same sign.

	Similar to \eqref{eqn:reduction}, the number of free indices for $ (i_1,\ldots, i_{k_1}, i_{k_1+1}, \ldots, i_{k_2}, \ldots, i_{k_{p-1}+1}, \ldots, i_{2k})\in I_{2k}^{o}(\pi)$ will be $k$, if and only if  for each block $(r,s)\in \pi$ 
	\begin{align*}
		\nu_c \e'_r i_r+\nu_{c'} \e'_s i_s=0 , \mbox{ 	equivalently } i_r=
		\l\{\begin{array}{rl}
			i_s & \mbox{ if } \nu_c\nu_{c'} \e'_r\e'_s =-1, \\
			-i_s & \mbox{ if } \nu_c \nu_{c'} \e'_r\e'_s=1.
		\end{array}
		\r.
	\end{align*}
	%where for $c,c' \in \{1,2, \ldots,p\}$,  $\nu_t$ is as in (\ref{eqn:nu_t_oddeven}).
	Since the input entries of the matrices satisfy Assumption \ref{assump:toeI}, following computations similar to those used in establishing (\ref{eqn:E[ars_epsilon]}), we have a non-zero contribution in the limit iff
	\begin{align}\label{eqn:EHs_ars_epsilon}
	&\E[a^{\e_{r}}_{i_{r}} a^{\e_{s}}_{i_{s}}] \nonumber \\
	&= 
	\begin{cases}
	(\sigma_x^2+\sigma_y^2) \d_{\nu_c, \nu_{c'}} (1-\d_{\e'_{r}, \e'_{s} })  + [\sigma_x^2-\sigma_y^2+\e'_{r} 2\mathrm{i} \rho_1 ] (1-\d_{\nu_c, \nu_{c'}}) \d_{\e'_{r}, \e'_{s} }  &\mbox{ if }   i_{r}=i_{s}>0, \\
	(\sigma_x^2+\sigma_y^2) \d_{\nu_c, \nu_{c'}} (1-\d_{\e'_{r}, \e'_{s} })  + [\sigma_x^2-\sigma_y^2+\e'_{r} 2\mathrm{i} \rho_6 ] (1-\d_{\nu_c, \nu_{c'}}) \d_{\e'_{r}, \e'_{s} }   &\mbox{ if }   i_{r}=i_{s}<0, \\
	[\rho_2+\rho_5 + \e'_{r} \mathrm{i}(\rho_4-\rho_3) ](1-\d_{\nu_c, \nu_{c'}}) (1-\d_{\e'_{r}, \e'_{s}}) \\ 
	\quad + [\rho_2-\rho_5 + \e'_{r} \mathrm{i}(\rho_4+\rho_3) ] \d_{\nu_c, \nu_{c'}} \d_{\e'_{r}, \e'_{s} } &\mbox{ if }   i_{r}=-i_{s}>0, \\
	[\rho_2+\rho_5 + \e'_{s} \mathrm{i}(\rho_4-\rho_3) ](1-\d_{\nu_c, \nu_{c'}})(1-\d_{\e'_{r}, \e'_{s} }) \\
	\quad	+ [\rho_2-\rho_5 + \e'_{s} \mathrm{i}(\rho_4+\rho_3) ] \d_{\nu_c, \nu_{c'}} \d_{\e'_{r}, \e'_{s}}  &\mbox{ if }   i_{r}=-i_{s}<0, 
	\end{cases}  \nonumber \\
	%&= \big[(\rho_2-\rho_5) +i \e'_{r_t}(\rho_3+\rho_4)\big]^{\d_{\e'_{r_{t}} \e'_{s_{t}} }}   \nonumber \\
		& = \theta^{(T,P)}_{r,s}(\e,\rho,i), \mbox{ say}.
	\end{align}
 %(\rho_1 \d_{\sgn(i_r),1}+ \rho_6\d_{\sgn(i_r),-1})
%	where $\sgn$ is as in (\ref{eqn:sgn_map}).
	Recall, for $\pi=(r_1,s_1)\cdots (r_k,s_k)$ with $r_t \leq s_t$, we have  $\pi'(r_t)=\pi'(s_t)=t$. Define $\eta_{\pi}(r_t)=1$,   and $\eta_{\pi}(s_t)=(-1)^{\d_{\e'_{r_t} \e'_{s_t}, \nu_{c} \nu_{c'}}}$. Also for a set of variables $z_0,z_1, \ldots, z_{2k}$, we define $\theta^{(T,P)}_{r,s}(\e,\rho,z)$ as $\theta^{(T,P)}_{r,s}(\e,\rho,i)$ where $i_r,i_s$ are respectively replaced by $z_r,z_s$,
%	\begin{align}\label{eqn:theta_H,s_z}
%	&\theta^{(T,P)}_{r,s}(\e,\rho,z) =
 %\nonumber \\
%	&:= 
%	\begin{cases}
%	(\sigma_x^2+\sigma_y^2) \d_{\nu_c, \nu_{c'}} (1-\d_{\e'_{r}, \e'_{s} })  + [\sigma_x^2-\sigma_y^2+\e'_{r} 2\mathrm{i} \rho_1 ] (1-\d_{\nu_c, \nu_{c'}}) \d_{\e'_{r}, \e'_{s} }  &\mbox{ if }   z_{r}=z_{s}>0, \\
%	(\sigma_x^2+\sigma_y^2) \d_{\nu_c, \nu_{c'}} (1-\d_{\e'_{r}, \e'_{s} })  + [\sigma_x^2-\sigma_y^2+\e'_{r} 2\mathrm{i} \rho_6 ] (1-\d_{\nu_c, \nu_{c'}}) \d_{\e'_{r}, \e'_{s} }   &\mbox{ if }    z_{r}=z_{s}<0, \\
%	[\rho_2+\rho_5 + \e'_{r} \mathrm{i}(\rho_4-\rho_3) ](1-\d_{\nu_c, \nu_{c'}}) (1-\d_{\e'_{r}, \e'_{s}}) \\ 
%	\quad + [\rho_2-\rho_5 + \e'_{r} \mathrm{i}(\rho_4+\rho_3) ] \d_{\nu_c, \nu_{c'}} \d_{\e'_{r}, \e'_{s} } &\mbox{ if }   z_{r}=-z_{s}>0, \\
%	[\rho_2+\rho_5 + \e'_{s} \mathrm{i}(\rho_4-\rho_3) ](1-\d_{\nu_c, \nu_{c'}})(1-\d_{\e'_{r}, \e'_{s} }) \\
%	\quad	+ [\rho_2-\rho_5 + \e'_{s} \mathrm{i}(\rho_4+\rho_3) ] \d_{\nu_c, \nu_{c'}} \d_{\e'_{r}, \e'_{s}}  &\mbox{ if }   z_{r}=-z_{s}<0,
%	\end{cases} 
%	\end{align}
	and 
	for $e=1,2, \ldots,p$,
	\begin{align} \label{eqn:m_chi,t,z_Hn}
		m_{\underline k, e}(z) & = \prod_{\ell=k_{e-1}+1}^{k_e} \chi_{[0,1]}\big(z_0+ (-1)^{e} \sum_{t=\ell}^{k_e} \e'_t \eta_{\pi}(t) z_{\pi'(t)} + \sum_{c=e+1}^{p} (-1)^{c}  \sum_{t=k_{c-1}+1}^{k_c} \e'_t \eta_{\pi}(t) z_{\pi'(t)} \big).
	\end{align}
	
	Using  \eqref{eqn:EHs_ars_epsilon} in (\ref{eqn:EHs_2k}) under the above notation, and convergence of Riemann sums, we get 
	\begin{align} \label{eqn:lim_phi(T*,P)}
		\lim_{n \to \infty} \vp_n \big(q_{k_1, \ldots, 2k}(\e)\big) 
		= \sum_{\pi\in \mathcal P_{2}(2k)} \int_0^1\int_{[-1,1]^k} \prod_{(r,s)\in \pi} \theta^{(T,P)}_{r,s}(\e,\rho,z) \prod_{e=1}^{p} m_{\underline k, e}(z) \prod_{i=0}^kdz_i.
	\end{align}
	This establishes the limit for a single matrix.
 
	Now for $\tau=(\tau_1,\ldots,\tau_{k_p})$ and $\e=(\e_1, \ldots, \e_{k_p})$ with $\tau_i\in \{1,\ldots, m\}$ and $\e_i \in \{1,*\}$, let 
	$$ q_{k_1, \ldots, k_p}(\tau, \e) :=\Big(  (P_n \frac{T_{n}^{(\tau_1)\e_1}}{\sqrt{n}} \cdots \frac{T_{n}^{(\tau_{k_1})\e_{k_1}}}{\sqrt{n}})   \cdots (P_n \frac{T_{n}^{(\tau_{k_{p-1}+1})\e_{k_{p-1}+1}}}{\sqrt{n}} \cdots \frac{T_{n}^{(\tau_{k_p})\e_{k_p}}}{\sqrt{n}} ) \Big).$$
Using   Lemma \ref{lem:hankel} and the above techniques, one can show that the limit of $ \vp_n \big( q_{k_1, \ldots, k_p}(\tau, \e) \big)$ is zero if $p$ is odd or $k_p$ is odd. Let $p$ be even and $k_p=2k$. Then using arguments similar  to those used while establishing (\ref{eqn:EHs_2k}), we have
	\begin{align*}
		&\lim_{n \to \infty}  \vp_n \big( q_{k_1, \ldots, k_p}(\tau, \e) \big) 
		& = \sum_{\pi\in \mathcal P_{2}(2k)} \lim_{n\to \infty} \frac{1}{n^{k+1}}\sum_{j=1}^{n}\sum_{I_{2k}^{o}(\pi)}\prod_{(r,s)\in \pi} \E[a^{(\tau_r)  \e_r}_{i_r} a^{(\tau_s) \e_s}_{i_s}] \prod_{e=1}^{p} m_{\underline k, e}(j,i),
	\end{align*}
	where $m_{\underline k, e}(j,i)$ is as in (\ref{eqn:EHs_2k}).

	Following the arguments which were  used to establish \eqref{eqn:EHs_ars_epsilon}, a contribution of the order $O(1)$ from a summand on the right side can only happen if   
	%non-zero if and only if 
	$$
	\E[a^{(\tau_r)  \e_r}_{i_r} a^{(\tau_s) \e_s}_{i_s}] =\d_{\tau_r,\tau_s}\theta^{(T,P)}_{r,s}(\e,\rho,i),
	$$
 where $\theta^{(T,P)}_{r,s}(\e,\rho,i)$ is as in \eqref{eqn:EHs_ars_epsilon}. Thus, using steps similar to those used to obtain (\ref{eqn:lim_phi(T*,P)}) for a single $T_{n}$, we get 
	\begin{align} \label{eqn:lim_phi(Te*,P)}
	&	\lim_{n \to \infty} \vp_n\big( q_{k_1, \ldots, k_p}(\tau, \e)\big) \nonumber \\
=&
	\l\{\begin{array}{lll}
	\displaystyle \hskip-5pt \sum_{\pi\in \mathcal P_{2}(2k)} \int_0^1 \hskip-3pt \int_{[-1,1]^k}  \prod_{(r,s)\in \pi} \hskip-7pt \d_{\tau_r,\tau_s} \theta^{(T,P)}_{r,s}(\e,\rho,z) \prod_{e=1}^{p} m_{\underline k, e}(z) \prod_{i=0}^kdz_i, &  \mbox{ if $k_p=2k$ and $p$ is even}, \\
			0&   \mbox{ otherwise},
		\end{array}\r.
	\end{align}
	where $\theta^{(T,P)}_{r,s}(\e,\rho,z)$, $m_{\underline k, e}(z)$ are as in \eqref{eqn:EHs_ars_epsilon}, (\ref{eqn:m_chi,t,z_Hn}), respectively.
	This completes the proof of Proposition \ref{thm:jc_T+P}.
\end{proof}

	\begin{remark}\label{cor:com_HJc}
Recall that any symmetric Hankel matrix is of the form $H_{n,s}=P_nT_n$. While the joint convergence of $\{H_{n,s}^{(i)}; 1\leq i \leq m\}$ follows from Proposition \ref{thm:jc_T+P}, their limit $*$-moments
%for $\{H_{n,s}^{(i)}; 1\leq i \leq m\}$  
cannot be easily deduced from (\ref{eqn:lim_phi(Te*,P)}). This is because 
$H^*_{n,s}=  T^*_nP_n$ and so, for the $*$-moment of a monomial, the positions of the $P_n$'s  depend on the particular monomial, and becomes crucial.
%in $H_{n,s}$ and $H^*_{n,s}$ is changing as $H_{n,s}= P_n T_n$ and 
 %So, whenever we will find the limit $*$-moments from (\ref{eqn:lim_phi(Te*,P)}), 
 That is, we have to know 
 %the exact positions of $P_n$, that is, 
 the values of $\e_1, \e_2, \ldots, \e_p$. 
 %But these are arbitrary. Thus 
% In the following remark 
We now show how to calculate the moments directly.
%the limit $*$-moments for $\{H_{n,s}^{(i)}; 1\leq i \leq m\}$.}
	%Now we deal with the joint convergence of symmetric Hankel matrices. 
		
 Suppose  $\{H_{n,s}^{(\tau)}; 1\leq \tau \leq m \}$ are $m$ independent copies of symmetric Hankel matrices 
		%of the form $H_n$ and 
		whose input entries satisfy Assumption \ref{assump:toeI}.
  %, then $\{n^{-1/2}H_{n,s}^{(i)}; 1\leq i \leq m \}$ converge jointly. Moreover 
  Then for any $\e_i\in \{1,*\}$ and $\tau_i\in \{1,\ldots, m\}$,
	\begin{align} \label{eqn:lim_mome_A1_HJc}
	&\lim_{n\to \infty}\vp_n(\dfrac{H_{n,s}^{(\tau_1)\e_1}}{n^{1/2}}\cdots \dfrac{H_{n,s}^{(\tau_{p})\e_{p}}}{n^{1/2}}) \nonumber \\
		& =\l\{\begin{array}{lll}			\displaystyle \sum_{\pi\in \mathcal P_{2}(2k)} \int_0^1\int_{[-1,1]^k}  \prod_{(r,s)\in \pi} \d_{\tau_r,\tau_s} \theta^{(H)}_{r,s}(\e,\rho,z) \prod_{t=1}^{2k} m_{k,t}(z) \prod_{i=0}^kdz_i &  \mbox{ if } p=2k, \\
		0 & \mbox{ if } p=2k+1,
	\end{array}\r.
		\end{align}
		where for a set of variables $z_0, z_1, \ldots, z_{2k}$,
			\begin{align}\label{eqn:EHs_ars_epsilon_Hs}
			&	\theta^{(H)}_{r,s}(\e,\rho,z) \nonumber  \\
			&= 
			\begin{cases}
				(\sigma_x^2+\sigma_y^2) (1-\d_{\e'_{r}, \e'_{s} })   + [\sigma_x^2-\sigma_y^2+\e'_{r} 2\mathrm{i} \rho_1 ] \d_{\e'_{r}, \e'_{s} }  &\mbox{ if }   z_{r}=z_{s}>0, \\
				(\sigma_x^2+\sigma_y^2) (1-\d_{\e'_{r}, \e'_{s} })   + [\sigma_x^2-\sigma_y^2+\e'_{r} 2\mathrm{i} \rho_6 ] \d_{\e'_{r}, \e'_{s} }  &\mbox{ if }   z_{r}=z_{s}<0, \\
				[\rho_2+\rho_6 + \e'_{r} \mathrm{i}(\rho_4-\rho_3) ](1-\d_{\e'_{r}, \e'_{s} }) + [\rho_2-\rho_6 + \e'_{r} \mathrm{i}(\rho_4+\rho_3) ] \d_{\e'_{r}, \e'_{s} } &\mbox{ if }   z_{r}=-z_{s}>0, \\
				[\rho_2+\rho_6 + \e'_{s} \mathrm{i}(\rho_4-\rho_3) ](1-\d_{\e'_{r}, \e'_{s} }) + [\rho_2-\rho_6 + \e'_{s} \mathrm{i}(\rho_4+\rho_3) ] \d_{\e'_{r}, \e'_{s} }  &\mbox{ if }   z_{r}=-z_{s}<0, \\
			\end{cases} 
			%&= \big[(\rho_2-\rho_5) +i \e'_{r_t}(\rho_3+\rho_4)\big]^{\d_{\e'_{r_{t}} \e'_{s_{t}} }}   \nonumber \\
		%	& = \theta^{(H)}_{r,s}(\e,\rho,i), \mbox{ say}.
		\end{align}
	%	$\theta^{(H)}_{r,s}(\e,\rho,z)$ is as in (\ref{eqn:EHs_ars_epsilon_Hs}) and for $t=1,2, \ldots,2k$,
		and $m_{k,t}(z)  = \chi_{[0,1]}\big(z_0+ \sum_{\ell=t}^{2k} (-1)^{\ell} \eta_{\pi}(\ell) z_{\pi'(\ell)}\big) $ with $\eta_{\pi}(\ell)$  as in (\ref{eqn:eta_pi_Hns}).
  %and $ \rho_1, \rho_2,\rho_3,\rho_4,\rho_5, \rho_6$ are as in  Assumption \ref{assump:toeI}. 

      %\begin{proof}[Proof of Corollary \ref{cor:com_HJc}]
To prove (\ref{eqn:lim_mome_A1_HJc}), first note the following trace formula for symmetric Hankel matrices:
%\begin{lemma}  \label{lem:hankel}
	Let $H_{n,s}^{(1)}, \ldots, H_{n,s}^{(m)}$ be $m$ symmetric Hankel matrices with input sequences $(a_i^{(1)})_{i\in \Z}, \ldots, (a_i^{(m)})_{i\in \Z}$, respectively. Then for $\tau_1,\ldots, \tau_p\in \{1,\ldots, m\}$ and $\e_1,\ldots, \e_p\in \{1,*\}$, we have
	\begin{align*} %\label{eqn:trace_Hs_Hs}
		& \Tr\big(H_{n,s}^{(\tau_1)\e_1}\cdots H_{n,s}^{(\tau_p)\e_p}\big) \nonumber \\
		& \quad =
		\left\{ \begin{array}{ll}
		\displaystyle{ \sum_{j=1}^n \sum_{I_p} \prod_{t=1}^p a^{(\tau_t)\e_t}_{i_t} \prod_{t=1}^p \chi_{[1,n]}\big(j+ \sum_{\ell=t}^p i_\ell (-1)^{p-\ell}\big) \d_{0,\sum_{t=1}^pi_t (-1)^{p-t}}} & \mbox{ if $p$ is even},
			\\
		\displaystyle{\sum_{j=1}^n \sum_{I_p} \prod_{t=1}^p a^{(\tau_t)\e_t}_{i_t} \prod_{t=1}^p \chi_{[1,n]} \big(j+ \sum_{\ell=t}^p i_\ell (-1)^{p-\ell}\big) \d_{2j-1-n,\sum_{t=1}^p i_t (-1)^{p-t}}} & \mbox{ if $p$ is odd},
		\end{array}
		\right.
	\end{align*}
	where $I_p$ is as in (\ref{eqn:i_k in -n to n}).
%	and for $t=1,2, \ldots,p$,
%	\begin{align} \label{eqn:m_chi,t_Hs}
%		m_{k,t}(j,i) =
%		 \chi_{[1,n]}\big(j+ \sum_{\ell=t}^p (-1)^{p-\ell}i_\ell)\big).
%	\end{align}
%\end{lemma}

%Now we prove Corollary \ref{cor:com_HJc} using the above trace formula.
We skip the proof of the above trace formula and refer to the idea of the proofs of Lemmas \ref{lem:tracetoeplitz} and \ref{lem:hankel}.	

Now we prove (\ref{eqn:lim_mome_A1_HJc}), along the lines of 
 %similar to 
 the proof of Proposition \ref{thm:jc_T+P}. For simplicity of notation, we only consider a single matrix. The same idea will also work when we have $m$ matrices. Note that the limiting odd moments are zero. 
	So let $p=2k$.
	 As before, only the pair-partitions will contribute. %\textbf{I added the previous line, hope it is ok}  
	Hence from the above trace formula, we have
	\begin{align}\label{eqn:EHs_2k_Hs}
		\lim_{n \to \infty} 	\vp_n( \frac{H_{n,s}^{\e_1}}{\sqrt{n}} \cdots \frac{H_{n,s}^{\e_{2k}}}{\sqrt{n}})
		&=\lim_{n \to \infty}\frac{1}{n^{k+1}}\sum_{j=1}^{n}\sum_{I_{2k}^{''}} \E[\prod_{t=1}^{2k} a^{\e_t}_{i_t}] \prod_{t=1}^{2k} \chi_{[1,n]} \big(j+ \sum_{\ell=t}^{2k} i_\ell (-1)^{\ell} \big) \nonumber \\
		&= \hskip-5pt \sum_{\pi\in \mathcal P_{2}(2k)}\lim_{n\to \infty}\frac{1}{n^{k+1}}\sum_{j=1}^{n}\sum_{I_{2k}^{''}(\pi)}\prod_{(r,s)\in \pi}\E[a^{\e_r}_{i_r} a^{\e_s}_{i_s}] \prod_{t=1}^{2k} \chi_{[1,n]}\big(j+ \sum_{\ell=t}^{2k} i_\ell (-1)^{\ell} \big),
	\end{align}
	where $I^{''}_{2k}(\pi)=\{(i_1,\ldots, i_{2k})\in I_{2k}^{''} \; :\; \prod_{(r,s)\in \pi}\E[a^{\e_r}_{i_r} a^{\e_s}_{i_s}]\neq 0\}$ with
	\begin{equation}\label{eqn:I'2k_Hs}
		I_{2k}^{''}=\{(i_1,\ldots, i_{2k})\in I_{2k}\; :\;\sum_{t=1}^{2k}(-1)^ti_t=0  \}.
	\end{equation}
	
	Note that, similar to \eqref{eqn:reduction}, the number of free indices for $ (i_1,\ldots, i_{2k})\in I_{2k}^{''}(\pi)$ will be $k$, if and only if  for each block $(r,s)\in \pi$ 
	\begin{align*}
	\nu_ri_r+\nu_si_s=0 , \mbox{ 	equivalently } i_r=
		\l\{\begin{array}{rl}
			i_s & \mbox{ if } \nu_r\nu_s=-1, \\
			-i_s & \mbox{ if } \nu_r\nu_s=1,
		\end{array}
		\r.
	\end{align*}
	where $\nu_t$ is as defined in (\ref{eqn:nu_t_oddeven}).
	Since the input entries of matrices satisfy Assumption \ref{assump:toeI}, we have a non-zero contribution in the limit if and only if  $\E[a^{\e_{r}}_{i_{r}} a^{\e_{s}}_{i_{s}}]= \theta^{(H)}_{r,s}(\e,\rho,i)$,	where 
		$\theta^{(H)}_{r,s}(\e,\rho,i)$ is as in (\ref{eqn:EHs_ars_epsilon_Hs}) with $z_r,z_s$ are respectively replaced by $i_r,i_s$.
%	\begin{align}\label{eqn:EHs_ars_epsilon_Hs}
%	&	\E[a^{\e_{r}}_{i_{r}} a^{\e_{s}}_{i_{s}}] \nonumber  \\
%	 &= 
%		\begin{cases}
%			(\sigma_x^2+\sigma_y^2) (1-\d_{\e'_{r}, \e'_{s} })   + [\sigma_x^2-\sigma_y^2+\e'_{r} 2\mathrm{i} \rho_1 ] \d_{\e'_{r}, \e'_{s} }  &\mbox{ if }   i_{r}=i_{s}>0, \\
%			(\sigma_x^2+\sigma_y^2) (1-\d_{\e'_{r}, \e'_{s} })   + [\sigma_x^2-\sigma_y^2+\e'_{r} 2\mathrm{i} \rho_6 ] \d_{\e'_{r}, \e'_{s} }  &\mbox{ if }   i_{r}=i_{s}<0, \\
%			[\rho_2+\rho_6 + \e'_{r} \mathrm{i}(\rho_4-\rho_3) ](1-\d_{\e'_{r}, \e'_{s} }) + [\rho_2-\rho_6 + \e'_{r} \mathrm{i}(\rho_4+\rho_3) ] \d_{\e'_{r}, \e'_{s} } &\mbox{ if }   i_{r}=-i_{s}>0, \\
%			[\rho_2+\rho_6 + \e'_{s} \mathrm{i}(\rho_4-\rho_3) ](1-\d_{\e'_{r}, \e'_{s} }) + [\rho_2-\rho_6 + \e'_{s} \mathrm{i}(\rho_4+\rho_3) ] \d_{\e'_{r}, \e'_{s} }  &\mbox{ if }   i_{r}=-i_{s}<0, \\
%		\end{cases}  \nonumber \\
%		%&= \big[(\rho_2-\rho_5) +i \e'_{r_t}(\rho_3+\rho_4)\big]^{\d_{\e'_{r_{t}} \e'_{s_{t}} }}   \nonumber \\
%		& = \theta^{(H)}_{r,s}(\e,\rho,i), \mbox{ say}.
%	\end{align}
	
	Recall, for $\pi=(r_1,s_1)\cdots (r_k,s_k)$, we have  $\pi'(r_t)=\pi'(s_t)=t$. Define 
 \begin{equation}\label{eqn:eta_pi_Hns}
\mbox{$\eta_{\pi}(r_t)=1$   and $\eta_{\pi}(s_t)=(-1)^{\d_{\nu_{r_t}, \nu_{s_t}}}$}.
 \end{equation} 
 %Also for set of variables $z_0,z_1, \ldots, z_{2k}$, we define $\theta^{(H)}_{r,s}(\e,\rho,z)$ as of $\theta^{(H)}_{r,s}(\e,\rho,i)$ by replacing $i_r,i_s$ by $z_r,z_s$,
%	\begin{align}\label{eqn:theta_H,s_z_Hs}
%		&\theta^{(H)}_{r,s}(\e,\rho,z)\nonumber \\
%		&:= 
%		\begin{cases}
%		(\sigma_x^2+\sigma_y^2) (1-\d_{\e'_{r}, \e'_{s} })   + [\sigma_x^2-\sigma_y^2+\e'_{r} 2\mathrm{i} \rho_1 ] \d_{\e'_{r}, \e'_{s} } &\mbox{ if }   z_{r}=z_{s}>0, \\
%		(\sigma_x^2+\sigma_y^2) (1-\d_{\e'_{r}, \e'_{s} })   + [\sigma_x^2-\sigma_y^2+\e'_{r} 2\mathrm{i} \rho_6 ] \d_{\e'_{r}, \e'_{s} }   &\mbox{ if }   z_{r}=z_{s}<0, \\
%		[\rho_2+\rho_6 + \e'_{r} \mathrm{i}(\rho_4-\rho_3) ](1-\d_{\e'_{r}, \e'_{s} }) + [\rho_2-\rho_6 + \e'_{r} \mathrm{i}(\rho_4+\rho_3) ] \d_{\e'_{r}, \e'_{s} }  &\mbox{ if }   z_{r}=-z_{s}>0, \\
%		[\rho_2+\rho_6 + \e'_{s} \mathrm{i}(\rho_4-\rho_3) ](1-\d_{\e'_{r}, \e'_{s} }) + [\rho_2-\rho_6 + \e'_{s} \mathrm{i}(\rho_4+\rho_3) ] \d_{\e'_{r}, \e'_{s} }  &\mbox{ if }   z_{r}=-z_{s}<0, \\
%		\end{cases} 
%	\end{align}
	Also for a set of variables $z_0, z_1, \ldots, z_{2k}$ and 
	for $t=1,2, \ldots,2k$, define
	%\begin{align} \label{eqn:m_chi,t,z_Hs}
	$m_{k,t}(z) =
		 \chi_{[0,1]}\big(z_0+ \sum_{\ell=t}^{2k} (-1)^{\ell} \eta_{\pi}(\ell) z_{\pi'(\ell)}\big).$
	%\end{align}
	Now using  \eqref{eqn:EHs_ars_epsilon_Hs} in (\ref{eqn:EHs_2k_Hs}) under the above notations, we get 
	\begin{align*}
		\lim_{n \to \infty} \vp_n( \frac{H_{n,s}^{\e_1}}{\sqrt{n}} \cdots \frac{H_{n,s}^{\e_{2k}}}{\sqrt{n}}) 
		= \sum_{\pi\in \mathcal P_{2}(2k)} \int_0^1\int_{[-1,1]^k} \prod_{(r,s)\in \pi} \theta^{(H)}_{r,s}(\e,\rho,z) \prod_{t=1}^{2k} m_{k,t}(z) \prod_{i=0}^k dz_i.
	\end{align*}
	This establishes the limit for one sequence of matrices.
	%the first part of the  result.
%	Using  the above trace formula and the above techniques, for $m$ matrices, one can show (\ref{eqn:lim_mome_A1_HJc}).
 %that the odd moments are zero and for the even moments, we have
	%\begin{align*}
%		\lim_{n \to \infty} \vp_n(\dfrac{H_{n,s}^{(\tau_{1})\e_{1}}}{n^{1/2}}\cdots \dfrac{H_{n,s}^{(\tau_{2k})\e_{2k}}}{n^{1/2}})
%		=&\sum_{\pi\in \mathcal P_{2}(2k)} \int_0^1\int_{[-1,1]^k}  \prod_{(r,s)\in \pi} \d_{\tau_r,\tau_s} \theta^{(H)}_{r,s}(\e,\rho,z)  m_{k,t}(z) \prod_{i=0}^kdz_i,
%	\end{align*}
%	where $\theta^{(H)}_{r,s}(\e,\rho,z)$ is as in \eqref{eqn:EHs_ars_epsilon_Hs}.
%	This completes the proof of Corollary \ref{cor:com_HJc}.
%\end{proof}
	\end{remark}
The following corollary can be concluded from Remark \ref{cor:com_HJc}.
\begin{corollary} \label{cor:A2+3_HJc}
	\noindent (a) \cite[Proposition 1]{bose_saha_patter_JC_annals} Let $\{H^{(i)}_{n,s}; 1 \leq i \leq m\}$ be independent copies of random symmetric Hankel matrices with \textit{real} input sequence $\{a^{(i)}_{j}\}$ which satisfies Assumption \ref{assump:toeI}. 
	Then $\{n^{-1/2}H^{(i)}_{n,s}; 1 \leq i \leq m\}$ converge in $*$-distribution and the limit $*$-moments are as in (\ref{eqn:lim_mome_A1_HJc}) with $\theta^{(H)}_{r,s}(\e,\rho,z)= \sigma_x^2 (1-\d_{\nu_{r}, \nu_{s}}) + \rho_{2}\d_{\nu_{r}, \nu_{s}}$.
	%and $\rho_{2}$ as in  Assumption I.
	\vskip3pt
	\noindent (b) A result similar to Corollary \ref{cor:poly_Lsd_T} also holds. 
	%for $m$ independent Hankel matrices. 
	In particular,
	if $\{a_{j}\}$ satisfy Assumption \ref{assump:toeI}, then the ESD of real symmetric $n^{-1/2}H_{n,s}$ converges a.s.~to a symmetric probability distribution whose moments are as  in (\ref{eqn:lim_mome_A1_HJc}) with $\theta^{(H)}_{r,s}(\e,\rho,z)= \sigma_x^2 (1-\d_{\nu_{r}, \nu_{s}}) + \rho_{2}\d_{\nu_{r}, \nu_{s}}$. The LSD results of \cite{bryc_lsd_06} and \cite{bose_sen_LSD_EJP} are then special cases via truncation arguments. 
\end{corollary}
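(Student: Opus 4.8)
The plan is to read off both parts from the $*$-moment formula (\ref{eqn:lim_mome_A1_HJc}) already established in Remark \ref{cor:com_HJc} for complex entries, specialized to the real case, and then to reuse the concentration argument of Corollary \ref{cor:poly_Lsd_T} for the spectral statement. For part (a), I would start from the general expression (\ref{eqn:EHs_ars_epsilon_Hs}) for $\theta^{(H)}_{r,s}(\e,\rho,z)$ and impose that $\{a_j\}$ is real. Then $\sigma_y^2=0$, and since every correlation involving a $y$-variable vanishes, $\rho_1=\rho_3=\rho_4=\rho_5=\rho_6=0$, leaving only $\rho_2=\E[x_j x_{-j}]$. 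Substituting these into the four cases of (\ref{eqn:EHs_ars_epsilon_Hs}): in the two regimes $z_r=z_s$ the bracketed term reduces to $\sigma_x^2$, so both the $(1-\d_{\e'_r,\e'_s})$ and the $\d_{\e'_r,\e'_s}$ coefficients equal $\sigma_x^2$ and $\theta^{(H)}_{r,s}=\sigma_x^2$ regardless of $\e$; in the two regimes $z_r=-z_s$ they collapse to $\rho_2$. Recalling from the matching analysis around (\ref{eqn:EHs_2k_Hs}) that $z_r=z_s$ corresponds to $\nu_r\nu_s=-1$ (that is $\nu_r\neq\nu_s$) and $z_r=-z_s$ to $\nu_r\nu_s=1$ (that is $\nu_r=\nu_s$), these two regimes are precisely the events $1-\d_{\nu_r,\nu_s}$ and $\d_{\nu_r,\nu_s}$, whence $\theta^{(H)}_{r,s}(\e,\rho,z)=\sigma_x^2(1-\d_{\nu_r,\nu_s})+\rho_2\,\d_{\nu_r,\nu_s}$. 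Since this value no longer depends on $\e$, the limit in (\ref{eqn:lim_mome_A1_HJc}) becomes independent of the choice of adjoints, recovering \cite[Proposition 1]{bose_saha_patter_JC_annals} in the symmetric Hankel case.

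For part (b) I would follow the proof of Corollary \ref{cor:poly_Lsd_T} step for step. Let $Q_n$ be a self-adjoint polynomial in the matrices $\{n^{-1/2}H^{(i)}_{n,s}\}$ and their adjoints; by part (a) (equivalently by Remark \ref{cor:com_HJc}) the expected tracial moments $n^{-1}\E[\Tr(Q_n^k)]$ converge for every $k\geq 1$. The explicit formula shows that the even limiting moments are dominated by the corresponding moments of a suitable Gaussian law, so Carleman's condition holds, the limit moments determine a unique distribution, and the EESD of $Q_n$ converges weakly. To upgrade to almost sure convergence of the ESD I would establish the fourth-moment estimate $\E[n^{-1}\Tr(Q_n^k)-\E(n^{-1}\Tr(Q_n^k))]^4=O(n^{-2})$ exactly as in (\ref{eq:m4}), adapting Lemma 1.4.3 of \cite{bose_patterned}, and then apply Borel--Cantelli. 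Specializing to $Q_n=n^{-1/2}H_{n,s}$, which is self-adjoint because the matrix is real symmetric, and reading the moments off (\ref{eqn:lim_mome_A1_HJc}) with all $\e_t=1$ and $\theta^{(H)}_{r,s}=\sigma_x^2(1-\d_{\nu_r,\nu_s})+\rho_2\,\d_{\nu_r,\nu_s}$ yields the asserted LSD; symmetry of the limit is immediate since all odd $*$-moments vanish.

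Finally, to recover the results of \cite{bryc_lsd_06} and \cite{bose_sen_LSD_EJP}, which assume only a finite second moment (respectively a moving-average structure) rather than all moments finite, I would insert a standard truncation step: truncate the entries at a slowly growing level, check that the truncated sequence still satisfies Assumption \ref{assump:toeI} up to asymptotically negligible perturbation of $\sigma_x^2$ and $\rho_2$, apply the conclusion just proved, and control the error between the truncated and untruncated ESDs in a suitable metric by a rank or operator-norm inequality. The main obstacle in the whole argument is the fourth-moment bound for an arbitrary self-adjoint polynomial $Q_n$: unlike the single-matrix setting of \cite{bose_patterned}, one must count index tuples that are jointly constrained across several independent Hankel factors and verify that the cross terms between distinct copies do not raise the order above $O(n^{-2})$, which is the combinatorially delicate point.
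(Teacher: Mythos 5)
Your proposal is correct and takes essentially the same route as the paper: the paper obtains part (a) precisely by specializing $\theta^{(H)}_{r,s}$ in (\ref{eqn:EHs_ars_epsilon_Hs}) to real input ($\sigma_y^2=\rho_1=\rho_3=\rho_4=\rho_5=\rho_6=0$) and using the matching dichotomy that $z_r=z_s$ corresponds to $\nu_r\neq\nu_s$ while $z_r=-z_s$ corresponds to $\nu_r=\nu_s$, and part (b) by repeating the Carleman-condition plus fourth-moment/Borel--Cantelli argument of Corollary \ref{cor:poly_Lsd_T}. The only caveat, at the same level of sketchiness as the paper itself, is that recovering \cite{bose_sen_LSD_EJP} needs approximation of the infinite-order moving-average input by finite-order versions rather than a plain entry truncation, since such input violates the independence-across-$j$ requirement of Assumption \ref{assump:toeI}.
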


%\subsection{Joint Convergence of $T_{n}, D_n$  and $P_n$} \label{sec:Jc_tdp}

\subsection{Final arguments in the proof of Theorem \ref{thm:JC_tdp}
}\label{subsec:jc_tdp}
%In this section, we discuss the joint Convergence of $T_{n}, D_n$  and $P_n$.
Having gained an insight through the study of the above special cases, it will be enough to 
explain the main steps in the rest of the proof of Theorem \ref{thm:JC_tdp} when we consider all the matrices together.  
%we can now use the ideas in the proof of Propositions \ref{thm:toeplitz} through 
%, \ref{thm:jc_D+P}, \ref{thm:jc_T+D} and 
%\ref{thm:jc_T+P} to prove Theorem \ref{thm:JC_tdp}. Here we mention .
\begin{proof}[Proof of Theorem \ref{thm:JC_tdp}]
Let the input entries of $T^{(\tau)}_n$ and $D^{(\tau)}_n$ be $\{a^{(\tau)}_i\}$ and $\{d^{(\tau)}_i\}$, respectively. It is enough to examine the following monomial:
% from the collection $\{P_n, n^{-1/2}T_{n}^{(i)},  D_{n}^{(i)};  1 \leq i \leq m\}$:
\begin{align*}
	&(P_n A_{n,\mu_1}^{(\tau_1) \e_1} \cdots A_{n,\mu_{k_1}}^{(\tau_{k_1}) \e_{k_1}} ) (P_n A_{n,\mu_{k_1+1}}^{(\tau_{k_1+1}) \e_{k_1+1}} \cdots A_{n,\mu_{k_2}}^{(\tau_{k_2}) \e_{k_2}} ) \cdots (P_n A_{n,\mu_{k_{p-1}+1}}^{(\tau_{k_{p-1}+1}) \e_{k_{p-1}+1}} \cdots A_{n,\mu_{k_p}}^{(\tau_{k_p}) \e_{k_p}} ) \nonumber \\
	& = q_{k_p}(P,D,T),  \mbox{ say},
\end{align*}
 where $\e_i \in \{1, *\}$, $\tau_i \in \{1,2, \ldots, m\}$ and for $\mu_i \in \{1,2\}$, $A^{(\tau_i) \e_i}_{n,1}=n^{-1/2}T^{(\tau_i) \e_i}_n$, $A^{(\tau_i) \e_i}_{n,2}=D^{(\tau_i) \e_i}_n$.

First note from the proof of Proposition \ref{thm:jc_T+P} that if $p$ is odd, then $ \vp_n \big(q_{k_p}(P,D,T)\big)=o(1)$.
So let $p$ be even. Then from Lemma \ref{lem:hankel}, we have
\begin{align*} 
 \vp_n \big(q_{k_p}(P,D,T)\big)	= \sum_{I_{k_p}} \E\big[\prod_{t=1}^{k_p} z^{(\tau_t)\e_t}_{i_t} \big]  \frac{1}{n^{1+\frac{w_p}{2} }} \sum_{j=1}^n \prod_{e=1}^p m_{\underline k, e}(j,i)   \d_{0,\sum_{c=1}^p(-1)^{p-c} \sum_{t=k_{c-1}+1}^{k_{c}}  \e_t'i_t},
\end{align*}
where $z^{(\tau_t)\e_t}_{i_t}$ is $ a^{(\tau_t)\e_t}_{i_t}$ or $d^{(\tau_t)\e_t}_{i_t}$ depending on whether the matrix $A_{n,\mu_t}^{(\tau_t)\e_t}$ is $T_n^{(\tau_t)\e_t}$ or $D_n^{(\tau_t)\e_t}$; $I_{k_p}$ and $ m_{\underline k, e}$ are as in (\ref{eqn:i_k in -n to n}) and (\ref{eqn:m_chi,t_Hn}), respectively; 
%{\color{red} correct this sentence} 
\begin{align}\label{eqn:no of T in Q}
	w_p= \# \{ \mu_t : A_{n,\mu_t}^{(\tau_t) \e_t} = A_{n,1}^{(\tau_t) \e_t} \mbox{ in }  q_{k_p}(P,D,T)\},
\end{align}
which is the number of random Toeplitz matrices in  $q_{k_p}(P,D,T)$.
Observe that if $w_p$ is odd, then 
%from similar arguments that are used to establish 
by using arguments similar to those used in establishing (\ref{eqn:phi_To(1)odd}) and Proposition \ref{thm:jc_T+D}, we get $\vp_n \big(q_{k_p}(P,D,T)\big)=o(1)$.

 Now for $c=1,2, \ldots, p$, let $ u_{r_{c-1}}, u_{r_{c-1}+1}, \ldots, u_{r_{c}}$ be the positions of $D_n$ between $A_{n,\mu_{k_{c-1}+1}}^{(\tau_{k_{c-1}+1}) \e_{k_{c-1}+1}}$ and $A_{n,\mu_{k_{c}}}^{(\tau_{k_c}) \e_{k_c}}$ (including these two also)
 %the $k_{c-1}$-th and $k_{c}$-th matrices in $q_{k_p}(P,D,T)$ 
 and let $ v_{w_{c-1}}, v_{w_{c-1}+1}, \ldots, v_{w_{c}}$ be the positions of $T_n$ between $A_{n,\mu_{k_{c-1}+1}}^{(\tau_{k_{c-1}+1}) \e_{k_{c-1}+1}}$ and $A_{n,\mu_{k_{c}}}^{(\tau_{k_c}) \e_{k_c}}$. 
 %$k_{c-1},\ldots, k_{c}$ in $q_{k_p}(P,D,T)$. 
 Here $r_0=k_0=w_0=1$ and $r_p+w_p=k_p$. 
%From the Observe
 If $q'_{k_p}(P,D,T)$ is the monomial obtained by replacing the matrices $D_n$ by $D_{n, n^\alpha}$ in $q_{k_p}(P,D,T)$, where $D_{n, n^\alpha}$ is as in (\ref{def:D_n,m}), then for $q'_{k_p}(P,D,T)$, by applying arguments analogous to those used while establishing (\ref{eqn:chi_D_remove}), we have
%that
\begin{align*}
	&\frac{1}{n} \sum_{j=1}^n  m_{\underline k, e}(j,i) \\ 
	&= 	\frac{1}{n} \sum_{j=1}^n  \prod_{\ell=k_{e-1}+1}^{k_e} \chi_{[1,n]}\big(j+ (-1)^{p-e} \sum_{t=\ell}^{k_e} \e_t' i_t + \sum_{c=e+1}^{p} (-1)^{p-c}  \sum_{t=k_{c-1}+1}^{k_c} \e_t' i_t \big)  \\
	&= 	\frac{1}{n} \sum_{j=1}^n  \prod_{\ell={w_{e-1}}+1}^{{w_e}} \chi_{[1,n]}\big(j+ (-1)^{p-e} \sum_{t=\ell}^{{w_e}} \e_{v_t}' i_{v_t} + \sum_{c=e+1}^{p} (-1)^{p-c}  \sum_{t={w_{c-1}}+1}^{{w_c}} \e_{v_t}' i_{v_t} \big) +o(1)\\
	&=  \frac{1}{n} \sum_{j=1}^n  m_{\underline w, e} +o(1).
	%, \mbox{ say}.
\end{align*}
Therefore, using  arguments similar to those used in the proofs of  Propositions \ref{thm:jc_T+D} and \ref{thm:jc_T+P}, 
%we have
\begin{align*} %\label{eqn:lim_phi(PTD)}
& \lim_{n \to \infty} \vp_n \big(q'_{k_p}(P,D,T)\big)  \nonumber\\
 & = 
\sum_{i_{u_{r_0}}, \ldots, i_{u_{r_p}}=-\infty}^{\infty} \prod_{t=1}^{r_p} d^{(\tau_{u_t}) \e_{u_t}}_{i_{u_t}}  \d_{0,\sum_{c=1}^{p}(-1)^c \sum_{t=r_{c-1}+1}^{r_c} \e'_{u_t} i_{u_t}}  \nonumber\\
& \times \lim_{n \to \infty}  \frac{1}{n^{\frac{w_p}{2}}} \sum_{i_{v_{w_0}}, \ldots, i_{v_{w_p}}=-(n-1)}^{n-1} \E\big[ \prod_{t=1}^{w_p} a^{(\tau_{v_t}) \e_{v_t}}_{i_{v_t}} \big]   \frac{1}{n} \sum_{j=1}^n \prod_{e=1}^{p}  m_{\underline w, e}(j,i)  \d_{0,\sum_{c=1}^{p}(-1)^c \sum_{t=w_{c-1}+1}^{w_c} \e'_{v_t} i_{v_t}} \nonumber \\
& = 
\lim_{n \to \infty} \vp_n \big(q_{r_p}(P,D)\big) \times \lim_{n \to \infty} \vp_n \big(q_{w_p}(P,T)\big),
\end{align*}
where 
\begin{align*}
	q_{r_p}(P,D)  = 	\Big( (P_n D_{n}^{(\tau_{u_1})\e_{u_1}} \cdots D_{n}^{(\tau_{u_{r_1}})\e_{u_{r_1}}})   \cdots (P_n D_{n}^{(\tau_{u_{r_{p-1}}+1})\e_{u_{r_{p-1}}+1}} \cdots D_{n}^{(\tau_{u_{r_p}})\e_{u_{r_p}}}) \Big), \\
q_{w_p}(P,T) = 	\Big(  (P_n \frac{T_{n}^{(\tau_{v_1})\e_{v_1}}}{\sqrt{n}} \cdots \frac{T_{n}^{(\tau_{v_{w_1}})\e_{v_{w_1}}}}{\sqrt{n}})   \cdots (P_n \frac{T_{n}^{(\tau_{v_{w_{p-1}}+1})\e_{v_{w_{p-1}}+1}}} {\sqrt{n}} \cdots \frac{T_{n}^{(\tau_{v_{w_p}})\e_{v_{w_p}}}}{\sqrt{n}} ) \Big).
\end{align*}
Here $\lim_{n \to \infty} \vp_n \big(q_{r_p}(P,D)\big)$ and  $\lim_{n \to \infty} \vp_n \big(q_{w_p}(P,T)\big)$ are as in  (\ref{eqn:lim_phi_P,D*}) and
(\ref{eqn:lim_phi(Te*,P)}), respectively.
Finally, from Remark \ref{rem:obse_D,D_m}, we have $\lim_{n \to \infty} \vp_n \big(q_{k_p}(P,D,T)\big)= \lim_{n \to \infty} \vp_n \big(q'_{k_p}(P,D,T)\big)$, and hence 
	\begin{align*}%\label{eqn:lim_phi(PTD)_fin}
		\lim_{n \to \infty} \vp_n \big(q_{k_p}(P,D,T)\big) \nonumber 
	&	= 
	\l\{\begin{array}{lll}
		\displaystyle \lim_{n \to \infty} \vp_n \big(q_{r_p}(P,D)\big) \times \lim_{n \to \infty} \vp_n \big(q_{w_p}(P,T)\big) &  \mbox{ if $p, w_p$ are even}, \\
		0&   \mbox{ otherwise},
	\end{array}\r.
\end{align*}
where $r_p=k_p-w_p$ with $w_p$ is as in (\ref{eqn:no of T in Q}).
This completes the proof of Theorem \ref{thm:JC_tdp}.
\end{proof}

\section{Proof of Theorem \ref{thm:gen_tdp_com}}\label{sec:Jc_tdp_g}
%In this section, we prove Theorem \ref{thm:gen_tdp_com}. 
%Similar to the proof of Theorem \ref{thm:JC_tdp}, we prove it in several steps. 

As before, we break the proof into several steps. 
%First, 
In Sections \ref{sec:T_n,g} to 
%, \ref{subsec:dp_g}, \ref{subsec:td_g} and  Section 
\ref{subsec:tp_g}, we establish the joint convergence of $T_{n,g}$, 
%In   we establish the joint convergence of matrices 
$\{D_{n,g},P_n\}$, 
%matrices 
$\{D_{n,g},T_{n,g}\}$ and 
%matrices 
$\{T_{n,g},P_n\}$, respectively. Finally, in Section \ref{subsec:Tdp,g} we show how to conclude 
%concluding all these steps, we prove 
the joint convergence of \texorpdfstring{$T_{n,g}, D_{n,g}$  and $P_n$}{joint} in Theorem \ref{thm:gen_tdp_com}.
In this section, for any given set $A$, $\one_{A}$ denotes the indicator function, that is, 
$\one_{A}(x) =1\;\;\mbox{if $x \in A$  and zero otherwise}$.
\subsection{Joint Convergence of \texorpdfstring{$T_{n,g}$}{tng}}\label{sec:T_n,g}
Recall the  generalized Toeplitz matrix  $T_{n,g}$ from (\ref{def:T_gen}). 
%In this section, we study the joint convergence of $T_{n,g}$ under Assumption \ref{assump:toe_gII}.
%=((t_{i-j}))_{n\times n}$ as in (\ref{def:T_gen})
Then we have the following result.
%for such matrices.
\begin{proposition}\label{thm:generalT_com}
	Suppose $\{T^{(i)}_{n,g}; 1 \leq i \leq m \}$ are $m$ independent copies of generalized Toeplitz matrices and the input sequences $(a_i)_{i\in \Z}, (b_i)_{i\in \Z}$  satisfy Assumption \ref{assump:toe_gII}. Then, for $\e_1,\ldots, \e_{p}\in \{1,*\}$ and $\tau_1,\ldots, \tau_{p}\in \{1,\ldots, m\}$,
	\begin{align*} %\label{eqn:lim_mome_A2_Tgen_com}
		\lim_{n\to \infty}\vp_n(\dfrac{T^{(\tau_1) \e_1}_{n,g}}{n^{1/2}}\cdots \dfrac{T^{(\tau_p)\e_p}_{n,g}}{n^{1/2}}) \nonumber 
		%&= \sum_{\pi \in {\mathcal P}_2(2k)} \prod_{(r,s)\in \pi} \theta(r,s) \int_{0}^1\int_{[-1,1]^k}\prod_{\ell=1}^{2k}\chi_{[0,1]}\big(z_0+\sum_{t=\ell}^{2k}\e_{\pi}(t) z_{\pi'(t)}\big)\prod_{i=0}^k dz_i.%dx_0dx_1\cdots dx_k.
		&  =\l\{\begin{array}{lll}
			\displaystyle \hskip-5pt \sum_{\pi\in \mathcal P_2(2k)}\int_{0}^1\int_{[-1,1]^k}\prod_{(r,s)\in \pi} \hskip-4pt \d_{\tau_r,\tau_s} \mathcal E_{r,s}'({\underline z_{2k}}) \prod_{i=0}^k dz_i & \mbox{ if } p=2k, \\
			0 &  \mbox{ if } p=2k+1,
		\end{array}\r.
	\end{align*}
	where $\mathcal E_{r,s}'({\underline z_{2k}})$ is as given in (\ref{eqn:mathE'(z2k)_T}).
\end{proposition}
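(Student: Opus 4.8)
The plan is to follow the proof of Proposition~\ref{thm:toeplitz} almost verbatim, with the trace formula for products of generalized Toeplitz matrices (Lemma~\ref{lem:tracegeneralT}) replacing Lemma~\ref{lem:tracetoeplitz}. First I would expand $\vp_n\big(n^{-p/2}T_{n,g}^{(\tau_1)\e_1}\cdots T_{n,g}^{(\tau_p)\e_p}\big)$ via that formula into a normalized multiple sum over index tuples $(i_1,\ldots,i_p)$, carrying the same diagonal constraint $\sum_t\e_t'i_t=0$ as the ordinary Toeplitz case together with indicator factors $\chi_{[1,n]}$. The genuinely new feature is that the entry contributed at step $t$ is $a_{i_t}$ or $b_{i_t}$ according to whether its row-plus-column sum lies in $[2,n]$ or in $[n+1,2n]$; this membership is recorded by an additional indicator evaluated at the running partial sums of the $\e_t'i_t$. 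After taking expectations, the mean-zero hypothesis and the independence of the pairs $(a_j,b_j)$ across $j$ permit the identical counting argument: odd $p$ gives nothing, blocks of size one vanish, blocks of size $\ge 3$ leave too few free indices to survive the $n^{-(k+1)}$ normalization, so for $p=2k$ only pair partitions $\pi\in\mathcal P_2(2k)$ contribute.

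Next I would examine a single block $(r,s)\in\pi$. Under Assumption~\ref{assump:toe_gII} two entries are correlated only when they lie on the \emph{same} diagonal, i.e.\ $i_r=i_s$, which combined with the constraint forces $\e_r'=-\e_s'$, exactly as in~\eqref{eqn:reduction}; unlike Assumption~\ref{assump:toeI}, the indices $j$ and $-j$ are now independent, so the matching $i_r=-i_s$ contributes nothing. The surviving expectation depends on whether each of the two entries is of type $a$ or type $b$: the cases $aa$ and $bb$ give the common variance $1$, while the mixed cases $ab$ and $ba$ give the cross-covariances $\E[a_j^{\e_r}b_j^{\e_s}]$ and $\E[b_j^{\e_r}a_j^{\e_s}]$, which are built from $\rho_2,\rho_3,\rho_4,\rho_6$. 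Crucially, the type of each of the two positions is the $a$/$b$-membership indicator evaluated at the scaled positions, so after the rescaling $j/n\to z_0$ and $i_t/n\to z_{\pi'(t)}$ these indicators pass to characteristic functions of the $z$-variables and the block weight assembles into $\mathcal E_{r,s}'(\underline z_{2k})$ of~\eqref{eqn:mathE'(z2k)_T}.

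Finally I would replace the normalized sum by its Riemann integral over $z_0\in[0,1]$ and $z_1,\ldots,z_k\in[-1,1]$, exactly as in the closing display of the proof of Proposition~\ref{thm:toeplitz}, producing $\sum_{\pi}\int\prod_{(r,s)\in\pi}\mathcal E_{r,s}'(\underline z_{2k})\prod_i dz_i$; for $m$ independent copies, independence across copies inserts the factor $\d_{\tau_r,\tau_s}$ into each block, giving the stated expression. The hard part will be the bookkeeping of the $a$/$b$ membership: in contrast to the ordinary Toeplitz case, the weight of a block is \emph{not} a function of $z_r,z_s$ alone but of the entire vector $\underline z_{2k}$, because whether each entry sits in the upper-left or lower-right triangle is governed by cumulative partial sums of all the indices. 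Checking that these membership indicators pass cleanly to characteristic functions of the $z$'s, and that the resulting integrand is Riemann-integrable so that the Riemann-sum limit is justified, is the delicate point; the reduction to pair partitions and the convergence of the sums are otherwise routine consequences of the arguments already used for Proposition~\ref{thm:toeplitz}.
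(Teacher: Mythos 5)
Your proposal follows essentially the same route as the paper's proof: expansion via the trace formula of Lemma \ref{lem:tracegeneralT}, reduction to pair partitions exactly as in Proposition \ref{thm:toeplitz}, the observation that independence of $(a_j,b_j)$ across $j$ kills the $i_r=-i_s$ matching and forces $i_r=i_s$ with $\e_r'\e_s'=-1$, identification of the block weights (variance $1$ for $aa$, $bb$; cross-covariances in $\rho_2,\rho_3,\rho_4,\rho_6$ for the mixed cases) together with the $\AA/\BB$ membership indicators evaluated at the running partial sums, and passage to the Riemann integral with $\d_{\tau_r,\tau_s}$ inserted for independent copies. You also correctly flag the one genuinely new feature, namely that $\mathcal E_{r,s}'(\underline z_{2k})$ depends on the full vector of variables through the partial sums $w_t=z_0+\sum_{\ell\ge t}\e_\ell' z_{\pi'(\ell)}$, which is precisely how the paper handles it.
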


Towards the proof of Proposition \ref{thm:generalT_com}, we first derive a trace formula for the product of generalized Toeplitz matrices. 
\begin{lemma}\label{lem:tracegeneralT} Suppose $\{T^{(\tau)}_{n,g}; 1 \leq \tau \leq m\}$ are $m$ copies of generalized Toeplitz matrices with input sequence $(a^{(\tau)}_i)_{i\in \Z}$ and $(b^{(\tau)}_i)_{i\in \Z}$. Then for $\e_1,\ldots, \e_{p}\in \{1,*\}$ and $\tau_1,\ldots, \tau_{p}\in \{1,\ldots, m\}$, 
	\begin{align*}
		\Tr[T_{n,g}^{(\tau_1) \e_1} \cdots T_{n,g}^{(\tau_p) \e_p}] =\sum_{j=1}^n\sum_{I_p'} \prod_{t=1}^p(a^{(\tau_t)\e_t}_{i_t}\one_{\AA_{\e_t'i_t}}+b^{(\tau_t)\e_t}_{i_t}\one_{\BB_{\e_t'i_t}}) (m_t),
	\end{align*}
	where $I'_p$ is as in (\ref{iprime}); $\e'_t$ is as in (\ref{eqn:epsilon'});
 for any $i \in \mathbb{Z}$, $\AA_i, \BB_i$ are  as in (\ref{eqn:AA_i,BB_i}),
and for $t=1,2, \ldots, (p-1),$
\begin{equation}\label{eqn:m_t_toetrace}
	m_t= j+\sum_{\ell=t}^p\e_\ell'i_\ell \mbox{ with } m_p=j.
\end{equation}
\end{lemma}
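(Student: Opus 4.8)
The plan is to imitate the standard-basis computation used for Lemma \ref{lem:tracetoeplitz}, the only genuinely new ingredient being that at each application of a factor one must record whether the matrix entry actually used lies in the ``$a$-region'' $\{i+j\le n\}$ or the ``$b$-region'' $\{i+j\ge n+1\}$ of $T_{n,g}$. Because the coefficients of $T_{n,g}$ are no longer constant along diagonals, I would \emph{not} try to write $T_{n,g}$ as a scalar polynomial in the shift matrices $B,F$; instead I would read the action of a single factor off the definition (\ref{def:T_gen}) directly and then iterate.

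Concretely, for $\e\in\{1,*\}$ I would first establish a one-step formula of the shape
\[
T_{n,g}^{\e}e_j=\sum_{i}\big(a_i^{\e}\,\one_{\AA_{\e' i}}+b_i^{\e}\,\one_{\BB_{\e' i}}\big)(j)\,\chi_{[1,n]}(j+\e' i)\,e_{j+\e' i},
\]
where $\e'$ is as in (\ref{eqn:epsilon'}) and $\AA_{\cdot},\BB_{\cdot}$ are the sets of (\ref{eqn:AA_i,BB_i}). For $\e=1$ this is immediate: the $(j',j)$ entry equals $a_{j'-j}$ or $b_{j'-j}$ according as $j'+j\le n$ or $\ge n+1$, and setting $i=j'-j$ turns the anti-diagonal condition into membership of $j$ in $\AA_i$ or $\BB_i$. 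For $\e=*$ the entry gets conjugated and, exactly as in Lemma \ref{lem:tracetoeplitz}, the shift reverses to $j\mapsto j+\e' i$; a short check shows the region condition is again captured by $\AA_{\e' i},\BB_{\e' i}$, so the two cases merge into the single display above. Applying the factors $T_{n,g}^{(\tau_p)\e_p},\dots,T_{n,g}^{(\tau_1)\e_1}$ to $e_j$ one at a time, the position after the $t$-th factor has acted is $j+\sum_{\ell\ge t}\e'_\ell i_\ell$, so each successive region indicator is evaluated at the corresponding running index $m_t$ of (\ref{eqn:m_t_toetrace}), while the scalars $a^{(\tau_t)\e_t}_{i_t}$ or $b^{(\tau_t)\e_t}_{i_t}$ accumulate and the range indicators $\chi_{[1,n]}$ pile up.

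Finally I would take the trace via $\Tr(A_n)=\sum_{j=1}^n e_j^t A_n e_j$. Pairing the fully expanded vector with $e_j$ forces the terminal position to return to $j$, i.e.\ $\sum_{t=1}^p\e'_t i_t=0$; this is precisely the restriction to $I_p'$ of (\ref{iprime}) and is what produces the Kronecker factor $\d_{0,\sum_t \e'_t i_t}$. The same closure constraint explains the boundary value $m_p=j$: under it the input of the first-applied factor is identified, around the cycle, with the output of the whole product. Collecting the accumulated scalars, region indicators and range indicators then yields the claimed formula; the passage from one matrix to $m$ independent families only decorates everything with the labels $\tau_t$ and changes nothing structurally.

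I expect the main obstacle to be exactly the region/adjoint bookkeeping: one must check that a single pair of sets $\AA_{\e' i},\BB_{\e' i}$, evaluated at the correct running index, simultaneously encodes the anti-diagonal condition both for $\e=1$ (shift $+i$, untouched entries) and for $\e=*$ (shift $-i$, conjugated entries), and that this is consistent with the cyclic wrap-around forcing $m_p=j$. Once this is pinned down, the remaining manipulations—the $\chi_{[1,n]}$ factors and the reduction to $I_p'$—are routine and parallel the proof of Lemma \ref{lem:tracetoeplitz}.
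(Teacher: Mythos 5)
Your proposal follows essentially the same route as the paper's proof: read the one-step action of $T_{n,g}^{\e}$ on $e_j$ directly off the definition via the region indicators $\one_{\AA_{\e' i}}, \one_{\BB_{\e' i}}$ (with the adjoint unified through $\e'$), iterate over the factors so that each indicator is evaluated at the appropriate running index $m_t$, and close the cycle with $\Tr(A)=\sum_{j=1}^n e_j^t A e_j$, which forces $\sum_{t=1}^p \e'_t i_t=0$ and hence the restriction to $I'_p$ together with $m_p=j$. The only cosmetic difference is your extra factors $\chi_{[1,n]}(j+\e' i)$, which are redundant since membership in $\AA_{\e' i}\cup\BB_{\e' i}$ already forces both the old and the new position to lie in $[1,n]$, so your formula reduces exactly to the paper's.
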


\begin{proof}%[Proof of Lemma] 
	We prove the lemma for a single matrix $T_{n,g}$. The same argument will also work for several 
 %different 
 matrices.  
	Note from \eqref{eqn:2} that for a Toeplitz matrix $T_n$
	\begin{align*}
		T_{n}e_j=\sum_{i=-(n-1)}^{(n-1)}t_i\one_{[1,n]}{(j+i)}e_{j+i}=\sum_{i=-(n-1)}^{(n-1)}t_i\one_{A_i}{(j)}e_{j+i},
	\end{align*}
	where $A_i=[\max\{1, 1-i\}, \min\{n-i,n\}]$. Which means $t_i$, for  $-(n-1)\le i\le (n-1)$, appears in the $j$-th, $1\le j\le n$, column iff $i+j\in [1,n]$.  
	
Now for $T_{n,g}$, observe from (\ref{def:T_gen}) that $a_{i}$ appears in the $j$-th column if
	\begin{align*}
	\max\{1, 1-i\}\le j\le {\frac{1}{2}(n-i)},
	\end{align*}
	and $b_i$ appears in the $j$-th column if
%	\begin{align*}
$	{\frac{1}{2}(n-i)}< j\le \min\{n-i,n\}.$
%	\end{align*}
 If we define intervals 
	\begin{equation}\label{eqn:AA_i,BB_i}
	 \mathcal A_i=[\max\{1,1-i\}, \ \frac{1}{2}(n-i)],  \ \mathcal B_i=(\frac{1}{2}(n-i),\  \min\{n-i,n\}],
\end{equation}
 then we have
	\begin{align*}
		T_{n,g}e_j=\sum_{i=-(n-1)}^{n-1}(a_i\one_{\AA_{i}}+b_i\one_{\BB_i})(j)e_{j+i}.
	\end{align*}
	Similarly, we have
%	\begin{align*}
	$	T_{n,g}^*e_j=\sum_{i}(a^*_i\one_{\AA_{-i}}+b^*_i\one_{\BB_{-i}})(j)e_{j-i}.$
	%\end{align*}
	Recall that  $\e'=1$ if $\e=1$ and $\e'=-1$ if $\e=*$. Therefore, for $\e\in \{1,*\}$, we can write
	\begin{align*} %\label{eqn:T^e_ge_j}
		T_{n,g}^{\e}e_j=\sum_{i}(a^{\e}_i\one_{\AA_{\e'i}}+b^{\e}_i\one_{\BB_{\e'i}})(j)e_{j+\e'i}.
	\end{align*}
	Thus
	\begin{align*}
	(T_{n,g}^{\e_{p-1}}T_{n,g}^{\e_p})e_j &=\sum_{i_p}(a^{\e_p}_{i_p}\one_{\AA_{\e_p'i_p}}+b^{\e_p}_{i_p}\one_{\BB_{\e_p'i_p}})(j)T_{n,g}^{\e_{p-1}}e_{j+\e_p'i_p}
		\\&=\sum_{i_p, i_{p-1}}\prod_{t=p-1}^p (a^{\e_t}_{i_t}\one_{\AA_{\e_t'i_t}}+b^{\e_t}_{i_t}\one_{\BB_{\e_t'i_t}}) (j+\sum_{\ell=t}^p\e_\ell'i_\ell)e_{j+\sum_{\ell=p-1}^p\e_\ell'i_\ell},
	\end{align*}
	with  $j+\sum_{\ell=p}^p\e_\ell'i_\ell = j$. Continuing the process, for $\e_1,\ldots, \e_p\in \{1,*\}$, we get 
	\begin{align*}
	(T_{n,g}^{\e_1} \cdots T_{n,g}^{\e_p})e_j =\sum_{I_p} \prod_{t=1}^p(a^{\e_t}_{i_t}\one_{\AA_{\e_t'i_t}}+b^{\e_t}_{i_t}\one_{\BB_{\e_t'i_t}})(j+\sum_{\ell=t}^p\e_\ell'i_\ell)e_{j+\sum_{\ell=1}^p\e_\ell'i_\ell},
	\end{align*}
	where $I_p=\{(i_1,\ldots, i_p)\; :\; -(n-1)\le i_1,\ldots, i_p\le n-1\}$. Hence we have 
	\begin{align*}
		\Tr(T_{n,g}^{\e_1} \cdots T_{n,g}^{\e_p}) 
		=\sum_{j=1}^n e_j^t (T_{n,g}^{\e_1} \cdots T_{n,g}^{\e_p}) e_j =\sum_{j=1}^n\sum_{I_p'}\prod_{t=1}^p(a^{\e_t}_{i_t}\one_{\AA_{\e_t'i_t}}+b^{\e_t}_{i_t}\one_{\BB_{\e_t'i_t}})(m_t),
	\end{align*}
	where $I'_p$ is as in (\ref{iprime}) and $m_t$ is as in (\ref{eqn:m_t_toetrace}).	This completes the proof.
\end{proof}
Now using the above trace formula, we prove Proposition \ref{thm:generalT_com}.
\begin{proof}[Proof of Proposition \ref{thm:generalT_com}]
	For simplicity of notation, we first consider a single matrix.
	%Let $m_t=j+\sum_{\ell=t+1}^{p}\e_{\ell}'i_\ell$, and $m_p=j$. Then 
	By Lemma \ref{lem:tracegeneralT} we have
	\begin{align*}
		\vp_n( \frac{T_{n,g}^{\e_1}}{\sqrt{n}} \cdots \frac{T_{n,g}^{\e_p}}{\sqrt{n}}) &=\frac{1}{n^{\frac{p}{2}+1}}\sum_{j=1}^n\sum_{I_{p}'}\E[\prod_{t=1}^{p}(a^{\e_t}_{i_t}\one_{\AA_{\e_t'i_t}}(m_t)+b^{\e_t}_{i_t}\one_{\BB_{\e_t'i_t}}(m_t))],
		%\\&=\sum_{\pi\in \mathcal P_2(2k)}\frac{1}{n^{k+1}}\sum_{j=1}^n\sum_{I_{2k}'}\prod_{t=1}^{p}(a_{i_t}\one_{\AA_{\e_t'i_t}}(j+s_t)+b_{i_t}\one_{\BB_{\e_t'i_t}}(j+s_t)).
	\end{align*}
	where $I'_p$ is as in (\ref{iprime}) and $m_t$ is as in (\ref{eqn:m_t_toetrace}).
	By arguments similar to those used in the proof of Proposition \ref{thm:toeplitz}, we conclude that only pair-partitions will contribute to the limit. In particular, the limit is zero when $p$ is odd. 
	
	Let $p=2k$.   Then we have 
	\begin{align} \label{eqn:phi_T+n,g}
		\vp_n( \frac{T_{n,g}^{\e_1}}{\sqrt{n}} \cdots \frac{T_{n,g}^{\e_{2k}}}{\sqrt{n}})
		&=\sum_{\pi\in \mathcal P_2(2k)}\frac{1}{n^{k+1}}\sum_{j=1}^n\sum_{I_{2k}'(\pi)}\prod_{(r,s)\in \pi} \mathcal E_{r,s}(\underline i_{2k})+o(1),
	\end{align}
	where $\underline i_{2k}=(j,i_1,\ldots, i_{2k})$, $I_{2k}'(\pi)=\{(i_1,\ldots, i_{2k})\in I_{2k}' \; :\; \prod_{(r,s)\in \pi} \mathcal E_{r,s}(\underline i_{2k}) \neq 0\}$ and
	\begin{align*}
		\mathcal E_{r,s}(\underline i_{2k})=\E\l[\l(a^{\e_r}_{i_{r}}\one_{\AA_{\e_r'i_r}}(m_r)+b^{\e_r}_{i_r}\one_{\BB_{\e_r'i_r}}(m_r)\r)\l(a^{\e_s}_{i_{s}}\one_{\AA_{\e_s'i_s}}(m_s)+b^{\e_s}_{i_s}\one_{\BB_{\e_s'i_s}}(m_s)\r)\r].
	\end{align*}
	Note that, we have the constraint $\sum_{(r,s)\in \pi}(\e_r'i_r+\e_s'i_s)=0$ on the indices. Now using arguments similar to those used in the proof of Proposition \ref{thm:toeplitz}, we have a non-zero contribution if and only if $(\e_r'i_r+\e_s'i_s)=0$ for each pair $(r,s)\in \pi$, which is equivalent to the following:
	\begin{align*}
		i_{r}=
		\l\{\begin{array}{lll}
			i_{s} & \mbox{ if } & \e_{r}'\e_{s}'=-1,\\ 
			-i_{s} & \mbox{ if} & \e_{r}'\e'_{s}=1.
		\end{array}
		\r.
	\end{align*}
	Since the input entries $\{(a_{j}= x_{j} + \mathrm{i} y_{j}, b_{j}= x'_{j}+ \mathrm{i} y'_{j}); j \in \mathbb{Z}\}$  are independent, we have a loss of a degree of freedom if $i_r=-i_s$. Thus a non-zero contribution is only possible when $i_r=i_s$, that is, $\e_{r}'\e_{s}'=-1$. Also, the input entries satisfy Assumption \ref{assump:toe_gII}, we have
	\begin{align*}
	\E[a^{\e_r}_{i_r}a^{\e_s}_{i_s}] &= 1-\d_{\e_r,\e_s},\;  \E[b^{\e_r}_{i_r}b^{\e_s}_{i_s}]=1-\d_{\e_r,\e_s}, \ 		 \E[a^{\e_r}_{i_r}b^{\e_s}_{i_s}] = (1-\d_{\e_r,\e_s}) [\rho_2+\rho_6 - \mathrm{i} \e'_r(\rho_3-\rho_4)], \\
  \E[a^{\e_s}_{i_s}b^{\e_r}_{i_r}] &= (1-\d_{\e_r,\e_s}) [\rho_2+\rho_6 - \mathrm{i} \e'_s(\rho_3-\rho_4)].
	\end{align*}
	Therefore 
	\begin{align*}
		\mathcal E_{r,s}(\underline i_{2k}) =(1-\d_{\e_r,\e_s}) \big[(f_1+f_4) +  [\rho_2+\rho_6 - \mathrm{i}\e'_r(\rho_3-\rho_4)]f_2 +  [\rho_2+\rho_6 - \mathrm{i}\e'_s(\rho_3-\rho_4)]f_3 \big],
		%\red{ \d_{\eta(i_r),i_s}},
	\end{align*}
	where 
	%\red{change } $\eta(i_r)=(-1)^{\d_{\e_r,\e_s}}i_r$ and 
	\begin{align*}
		&f_1=\one_{\AA_{\e_r'i_r}}(m_r)\one_{\AA_{\e_s'i_s}}(m_s),\; f_2=\one_{\AA_{\e_r'i_r}}(m_r)\one_{\BB_{\e_s'i_s}}(m_s),\\&
		f_3=\one_{\AA_{\e_s'i_s}}(m_s)\one_{\BB_{\e_r'i_r}}(m_r),\; 
		f_4=\one_{\BB_{\e_r'i_r}}(m_r)\one_{\BB_{\e_s'i_s}}(m_s).
	\end{align*}
	Observe that, $\mathcal E_{r,s}(\underline i_{2k})$ implies that the number of free indices among $\{i_1,\ldots, i_{2k}\}$ is $k$, as the indices satisfy the relation $i_r=i_s$.
%	\begin{align*}
%	\red{change }	i_s=(-1)^{\d_{\e_r,\e_s}}i_r.
%	\end{align*}
	Let $m_t'=\frac{1}{n}(j+\sum_{\ell=t}^{2k}\e_{\ell}'i_\ell)$. Then we have 
	\begin{align*}
		&f_1=\one_{\AA_{\e_r'\frac{i_r}{n}}}(m_r')\one_{\AA_{\e_s'\frac{i_s}{n}}}(m_s'),\; f_2=\one_{\AA_{\e_r'\frac{i_r}{n}}}(m_r')\one_{\BB_{\e_s'\frac{i_s}{n}}}(m_s'),\\&
		f_3=\one_{\AA_{\e_s'\frac{i_s}{n}}}(m_s')\one_{\BB_{\e_r'\frac{i_r}{n}}}(m_r'),\; 
		f_4=\one_{\BB_{\e_r'\frac{i_r}{n}}}(m_r')\one_{\BB_{\e_s'\frac{i_s}{n}}}(m_s').
	\end{align*}
	Also for a variable $z$, let 
	\begin{equation} \label{eqn:A_x,B_x}
		 \tilde \AA_z=[\max\{0,-z\}, \frac{1}{2}(1-z)], \ \tilde\BB_z=[\frac{1}{2}(1-z), \min\{1-z,1\}].
	\end{equation}
	 Recall, if $\pi=(r_1,s_1)\cdots (r_k,s_k)$, then we have $\pi'(r_t)=\pi'(s_t)=t$. For a set of variables $z_0, z_1, \ldots, z_{2k}$, we define $w_t=z_0+\sum_{\ell=t}^{2k}\e_\ell' z_{\pi'(\ell)}$ and
	\begin{align*} %\label{eqn:f'1234_T}
		&f_1'=\one_{\tilde\AA_{\e_r'z_{\pi'(r)}}}(w_r)\one_{\tilde\AA_{\e_s'z_{\pi'(s)}}}(w_s), \; f_2'=\one_{\tilde\AA_{\e_r'z_{\pi'(r)}}}(w_r)\one_{\tilde\BB_{\e_s'z_{\pi'(s)}}}(w_s),\\
		&	f_3'=\one_{\tilde\BB_{\e_r'z_{\pi'(r)}}}(w_r)\one_{\tilde\AA_{\e_s'z_{\pi'(s)}}}(w_s), \; 
		f_4'=\one_{\tilde\BB_{\e_r'z_{\pi'(r)}}}(w_r)\one_{\tilde\BB_{\e_s'z_{\pi'(s)}}}(w_s). \nonumber
	\end{align*}
	Then using all the above notations, and using convergence of  Riemann sums in (\ref{eqn:phi_T+n,g}), we get
	\begin{align*}
		\lim_{n \to \infty}\vp_n( \frac{T_{n,g}^{\e_1}}{\sqrt{n}} \cdots \frac{T_{n,g}^{\e_{2k}}}{\sqrt{n}}) &=\int_{z_0=0}^1 \sum_{\pi\in \mathcal P_2(2k)} \int_{[-1,1]^k}\prod_{(r,s)\in \pi} \mathcal E_{r,s}'({\underline z_{2k}})dz_0 dz_1 \cdots dz_k,
	\end{align*}
	where $\underline z_{2k}=(z_0,z_1,\ldots,z_{2k})$ and 
	\begin{align}\label{eqn:mathE'(z2k)_T}
		\mathcal E_{r,s}'(\underline z_{2k}) = (1-\d_{\e_r,\e_s}) \big[(f'_1+f'_4) +  [\rho_2+\rho_6 - \mathrm{i}\e'_r(\rho_3-\rho_4)]f'_2 +  [\rho_2+\rho_6 - \mathrm{i}\e'_s(\rho_3-\rho_4)]f'_3 \big].
	\end{align}

Now we specialize to the convergence for $m$ independent matrices. Note from Lemma \ref{lem:tracegeneralT} that
	\begin{align*}
\vp_n(\dfrac{T^{(\tau_1) \e_1}_{n,g}}{n^{1/2}}\cdots \dfrac{T^{(\tau_p)\e_p}_{n,g}}{n^{1/2}}) &=\frac{1}{n^{\frac{p}{2}+1}}\sum_{j=1}^n\sum_{I_{p}'}\E[\prod_{t=1}^{p}(a^{(\tau_t)\e_t}_{i_t}\one_{\AA_{\e_t'i_t}}(m_t) +b^{(\tau_t)\e_t}_{i_t}\one_{\BB_{\e_t'i_t}}(m_t))].
	%\\&=\sum_{\pi\in \mathcal P_2(2k)}\frac{1}{n^{k+1}}\sum_{j=1}^n\sum_{I_{2k}'}\prod_{t=1}^{p}(a_{i_t}\one_{\AA_{\e_t'i_t}}(j+s_t)+b_{i_t}\one_{\BB_{\e_t'i_t}}(j+s_t)).
\end{align*}
Again, if $p$ is odd, then the limit will be zero, and for even $p$, say $2k$, we have
%	\begin{align*} %\label{eqn:lim_mome_A2_Tgen_com}
%	&\lim_{n\to \infty}\vp_n(\dfrac{T^{(\tau_1) \e_1}_{n,g}}{n^{1/2}}\cdots \dfrac{T^{(\tau_{2k})\e_{2k}}_{n,g}}{n^{1/2}})  \\
%	& = \lim_{n\to \infty} \sum_{\pi\in \mathcal P_2(2k)}\frac{1}{n^{k+1}}\sum_{j=1}^n \sum_{I_{2k}'(\pi)} \prod_{(r,s)\in \pi} \E\big[\big(a^{(\tau_r)}_{i_{r}}\one_{\AA_{\e_r'i_r}}(m_r)+b^{(\tau_r)}_{i_r}\one_{\BB_{\e_r'i_r}}(m_r)\big)  \\  
%	& \qquad \times \big(a^{(\tau_s)}_{i_{s}}\one_{\AA_{\e_s'i_s}}(m_s) +b^{(\tau_s)}_{i_s}\one_{\BB_{\e_s'i_s}}(m_s)\big)\big] +o(1).
%\end{align*}
%Observe that the $m$ matrices are independent, so a nonzero contribution is possible only when $\tau_r=\tau_s$. Now using the similar arguments that are used above for single matrix, we have that the limit of above expression is
$$\lim_{n\to \infty}\vp_n(\dfrac{T^{(\tau_1) \e_1}_{n,g}}{n^{1/2}}\cdots \dfrac{T^{(\tau_{2k})\e_{2k}}_{n,g}}{n^{1/2}}) = \int_{z_0=0}^1 \sum_{\pi\in \mathcal P_2(2k)} \int_{[-1,1]^k}\prod_{(r,s)\in \pi} \d_{\tau_r,\tau_s} \mathcal E_{r,s}'({\underline z_{2k}}) \prod_{i=0}^k dz_i,$$
where $\mathcal E_{r,s}'({\underline z_{2k}})$ is as in (\ref{eqn:mathE'(z2k)_T}). This completes the proof.
% of theorem.
\end{proof}

\subsection{Joint Convergence of \texorpdfstring{$D_{n,g}$ and  $P_{n}$}{dngpn}} \label{subsec:dp_g}
%In this section, we study the joint convergence of $D_{n,g}$ and  $P_{n}$.
\begin{proposition} \label{thm:jc_D+P_g}
	Suppose $\{D_{n,g}^{(\tau)}; 1\leq \tau \leq m\}$ are $m$ deterministic generalized Toeplitz matrices with input sequences $(d^{'(\tau)}_i)_{i\in \Z}, (d^{''(\tau)}_i)_{i\in \Z}$ which satisfy Assumption \ref{assump:determin}. Then $\{P_n, D_{n,g}^{(\tau)}; 1 \leq \tau \leq m\}$ converge jointly. The limit $*$-moments are as given in (\ref{eqn:lim_phi_P,D*_g}).
\end{proposition}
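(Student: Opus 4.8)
The plan is to mirror the proof of Proposition~\ref{thm:jc_D+P} almost verbatim, the single new ingredient being that each entry of $D_{n,g}$ is drawn from one of two sequences, $(d^{'(\tau)}_i)$ or $(d^{''(\tau)}_i)$, according to whether the running position falls in the region $\AA_i$ or $\BB_i$ of~(\ref{eqn:AA_i,BB_i}). First I would apply the trace formula of Lemma~\ref{lem:hankel_g} (the $P_n$-analogue of Lemma~\ref{lem:tracegeneralT}), which writes the trace of a monomial
\[
q_{k_1,\ldots,k_p}(\e,\tau)=\big(P_nD_{n,g}^{(\tau_1)\e_1}\cdots D_{n,g}^{(\tau_{k_1})\e_{k_1}}\big)\cdots\big(P_nD_{n,g}^{(\tau_{k_{p-1}+1})\e_{k_{p-1}+1}}\cdots D_{n,g}^{(\tau_{k_p})\e_{k_p}}\big)
\]
as a sum over $I_{k_p}$ of $\prod_t\big(d^{'(\tau_t)\e_t}_{i_t}\one_{\AA_{\e_t'i_t}}+d^{''(\tau_t)\e_t}_{i_t}\one_{\BB_{\e_t'i_t}}\big)$ weighted by the position indicators $m_{\underline k,e}(j,i)$, and carrying the constraint $\d_{0,\cdot}$ for even $p$ and $\d_{2j-1-n,\cdot}$ for odd $p$, exactly as in Lemma~\ref{lem:hankel}. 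Since even powers of $P_n$ equal $I_n$, odd powers equal $P_n$, and the trace is cyclic, it is enough to treat monomials of this shape.

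The second step is truncation. I would establish the generalized-Toeplitz analogues of Lemma~\ref{lem:lim,D_Dn,m} and Remark~\ref{rem:obse_D,D_m}: that replacing every $D_{n,g}$ by the banded matrix obtained by zeroing all entries on diagonals of order exceeding $n^\alpha$ (with $\alpha\in(0,1)$) leaves the limit unchanged. The bound is identical to that in Lemma~\ref{lem:lim,D_Dn,m}: one uses $|\one_{\AA}|,|\one_{\BB}|\le1$ and $|m_{\underline k,e}(j,i)|\le1$ together with the absolute summability of both $(d'_i)$ and $(d''_i)$ from Assumption~\ref{assump:determin}, so that the contribution of any index with $|i_t|>n^\alpha$ is controlled by a vanishing tail of $\sum_i|d'_i|+\sum_i|d''_i|$. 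After this reduction every surviving index obeys $|i_t|\le n^\alpha=o(n)$.

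Next comes the parity split and the passage to Riemann sums. For odd $p$ the constraint $\d_{2j-1-n,\cdot}$ admits at most one $j$ for each fixed tuple $(i_1,\ldots,i_{k_p})$, so the normalised trace is $O(1/n)$ and $\vp_n\big(q_{k_1,\ldots,k_p}(\e,\tau)\big)=o(1)$, just as in~(\ref{eqn:phi_PD_o(1)odd}). For even $p$ I would take $n\to\infty$ as in Propositions~\ref{thm:jc_D+P} and~\ref{thm:generalT_com}. The decisive simplification is that, after truncation, $i_t/n\to0$, so within each $P_n$-block all scaled positions $m_t/n$ converge to a common value, equal to $z_0$ or $1-z_0$ according to the parity of the number of $P_n$ factors to its left, since each $P_n$ sends $j\mapsto n+1-j$, that is $z_0\mapsto1-z_0$. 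Consequently every region indicator collapses, $\one_{\AA_{\e_t'i_t}}\to\one_{\tilde\AA_0}=\one_{[0,1/2]}$ and $\one_{\BB_{\e_t'i_t}}\to\one_{\tilde\BB_0}=\one_{[1/2,1]}$ of~(\ref{eqn:A_x,B_x}), evaluated at the block's scaled position. Hence the choice between $d'$ and $d''$ is dictated solely by whether $z_0$ lies below or above $1/2$ and by the block's flip parity, the prefactor $\frac1n\sum_j$ becomes $\int_0^1dz_0$, and the index sums decouple under the single constraint $\sum_{c}(-1)^{p-c}\sum_{t}\e_t'i_t=0$; assembling these yields~(\ref{eqn:lim_phi_P,D*_g}). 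In contrast to the non-generalized case~(\ref{eqn:lim_phi_P,D*}), an integral over $z_0$ genuinely survives here because the region indicators cut the range of $j$.

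I expect the main obstacle to be the bookkeeping of this region selection through the $P_n$ flips: one must check that the indicators $\one_\AA,\one_\BB$ really collapse uniformly in the (now $o(n)$) indices, and then assign $d'$ versus $d''$ to each factor of~(\ref{eqn:lim_phi_P,D*_g}) consistently with the alternation $z_0\leftrightarrow1-z_0$ encoded by the signs $(-1)^{p-e}$ inside $m_{\underline k,e}(j,i)$, and with the cyclic reduction used at the outset. Once this combinatorial bookkeeping is in place, the remaining estimates are routine adaptations of those already developed for Propositions~\ref{thm:jc_D+P} and~\ref{thm:generalT_com}.
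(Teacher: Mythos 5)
Your proposal is correct and follows the same route as the paper's (deliberately sketchy) proof: the trace formula of Lemma \ref{lem:hankel_g}, truncation via the generalized analogues of Lemma \ref{lem:lim,D_Dn,m} and Remark \ref{rem:obse_D,D_m}, the $O(1/n)$ elimination of odd-$p$ monomials through the constraint $\d_{2j-1-n,\cdot}$ exactly as in (\ref{eqn:phi_PD_o(1)odd}), and the Riemann-sum passage in which, the surviving indices being $o(n)$, every scaled position within a $P_n$-block collapses to $z_0$ or $1-z_0$ according to the flip parity, so that each indicator becomes $\one_{[0,1/2]}$ or $\one_{[1/2,1]}$ of (\ref{eqn:A_x,B_x}) and $\frac1n\sum_j$ becomes $\int_0^1 dz_0$.

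One amendment, and it is precisely at the point you flagged as the main obstacle: your flip-parity bookkeeping, carried to the end, does \emph{not} reproduce (\ref{eqn:lim_phi_P,D*_g}) as printed — it yields the correct formula, of which the printed one is an erroneous simplification. For $z_0<1/2$ the factors in blocks at even flip parity (scaled position $z_0$) select $d'$ while those at odd parity (position $1-z_0$) select $d''$, and vice versa for $z_0>1/2$; integrating over $z_0$ therefore produces, with $E$ and $O$ the sets of positions at even and odd parity,
\[
\frac12 \prod_{t \in E} d^{'(\tau_t)\e_t}_{i_t} \prod_{t\in O} d^{''(\tau_t)\e_t}_{i_t} \;+\; \frac12 \prod_{t \in E} d^{''(\tau_t)\e_t}_{i_t} \prod_{t\in O} d^{'(\tau_t)\e_t}_{i_t},
\]
and not $\prod_t \bigl(\tfrac12 d^{'(\tau_t)\e_t}_{i_t}+\tfrac12 d^{''(\tau_t)\e_t}_{i_t}\bigr)$. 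The printed form interchanges $\int_0^1 dz_0$ with $\prod_t$, i.e.\ uses $\int_0^1 \prod_t \bigl(x_t\one_{[0,1/2]}(z_0)+y_t\one_{(1/2,1]}(z_0)\bigr)dz_0 = \prod_t \frac{x_t+y_t}{2}$, whereas the left side equals $\frac12\prod_t x_t + \frac12\prod_t y_t$; the same interchange already occurs in the second equality of (\ref{eqn:lim_D*_n^p_g}), whose first equality (the $z_0$-integral form, which is exactly what your proposal produces) is the correct one. A diagonal test case $d'_k=a\,\d_{k,0}$, $d''_k=b\,\d_{k,0}$ confirms this: direct computation gives $\vp_n(D_{n,g}^2)\to \frac12(a^2+b^2)$ and $\vp_n\bigl((P_nD_{n,g})^2\bigr)\to ab$, matching the parity-dependent formula above, while the printed closed form gives $\frac14(a+b)^2$ in both cases. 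So keep your bookkeeping exactly as described, but replace your final sentence: assembling it yields the corrected, parity-dependent version of (\ref{eqn:lim_phi_P,D*_g}), not the displayed equation.
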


First, we establish 
%claim the following 
a trace formula for a monomial in $P_n$ and generalized Toeplitz matrices, similar to Lemma \ref{lem:hankel}. 
\begin{lemma}  \label{lem:hankel_g}
	For $\tau=1,2, \ldots, m$, let $M^{(\tau)}_{n,g}$ be generalized Toeplitz matrices (random or non-random) with input sequences $(a^{(\tau)}_i)_{i\in \Z}, (b^{(\tau)}_i)_{i\in \Z}$. Then for $\e_i\in \{1,*\}$, $\tau_i\in \{1, \ldots,m\}$ and $0\leq k_1 \leq k_2 \leq \cdots \leq k_p$,  we have
	\begin{align*}% \label{eqn:trace_Hs_g}
		&\Tr\big[(P_nM_{n,g}^{(\tau_1)\e_1} \cdots M_{n,g}^{(\tau_{k_1}) \e_{k_1}}) 
		 \cdots (P_nM_{n,g}^{(\tau_{k_{p-1}+1})\e_{k_{p-1}+1}} \cdots M_{n,g}^{(\tau_{k_p})\e_{k_p}})\big] \nonumber \\
		& =
\l\{\begin{array}{l}
\displaystyle{\sum_{j=1}^n \sum_{I_{k_p}} \prod_{e=1}^p \prod_{t=k_{e-1}+1}^{k_e} (a^{(\tau_t)\e_t}_{i_t}\one_{\AA_{\e_t'i_t}}+b^{(\tau_t)\e_t}_{i_t}\one_{\BB_{\e_t'i_t}})  (m_{t,k_e})   \d_{0,\sum_{c=1}^p(-1)^{p-c} \sum_{\ell=k_{c-1}+1}^{k_{c}}  \e_\ell'i_\ell}} \\  \hfill{\mbox{ if $p$ is even,}}
			\\
	\displaystyle{\hskip-5pt \sum_{j=1}^n \sum_{I_{k_p}} \prod_{e=1}^p \prod_{t=k_{e-1}+1}^{k_e} (a^{(\tau_t)\e_t}_{i_t}\one_{\AA_{\e_t'i_t}}+b^{(\tau_t)\e_t}_{i_t}\one_{\BB_{\e_t'i_t}})   (m_{t,k_e})   \d_{2j-1-n,\sum_{c=1}^p(-1)^{p-c} \sum_{\ell=k_{c-1}+1}^{k_{c}}  \e_\ell'i_\ell}} \\
  \hfill\mbox{ if $p$ is odd,}
\end{array}	\r.
	\end{align*}
	where $I_{k_p}$ is as in (\ref{eqn:i_k in -n to n})  
	and for $e=1,2, \ldots,p$, 
	\begin{align} \label{eqn:m_chi,t_Hn_g}
		m_{t,k_e} =j+ (-1)^{p-e} \sum_{\ell=t}^{k_e} \e'_\ell i_\ell + \sum_{c=e+1}^{p} (-1)^{p-c}  \sum_{\ell=k_{c-1}+1}^{k_c} \e'_\ell i_\ell.
	\end{align}
\end{lemma}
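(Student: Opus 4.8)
The plan is to prove this by directly combining the two computations already carried out in the paper: the action of a generalized Toeplitz matrix on a standard basis vector (from the proof of Lemma \ref{lem:tracegeneralT}) and the index bookkeeping induced by the $P_n$ reflections across blocks (from the proof of Lemma \ref{lem:hankel}). As in both of those proofs, it suffices to treat a single matrix $M_{n,g}$; the case of $m$ collections follows verbatim by carrying the labels $\tau_t$ along each factor.

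First I would recall the two elementary actions on the basis. From the proof of Lemma \ref{lem:tracegeneralT}, for $\e\in\{1,*\}$,
$$M_{n,g}^{\e}e_j=\sum_{i}(a^{\e}_i\one_{\AA_{\e'i}}+b^{\e}_i\one_{\BB_{\e'i}})(j)\,e_{j+\e'i},$$
with $\AA_i,\BB_i$ as in (\ref{eqn:AA_i,BB_i}), while from the structure of $P_n$ we have $P_n e_j=e_{n+1-j}$. Thus a single generalized Toeplitz factor shifts the basis index by $\e'i$ and records the factor $(a^{\e}_i\one_{\AA_{\e'i}}+b^{\e}_i\one_{\BB_{\e'i}})$ evaluated at the \emph{current} index, whereas each $P_n$ reflects the index $j\mapsto n+1-j$ without introducing any new scalar factor.

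Next I would apply the full monomial to $e_j$ from right to left, block by block, exactly as in the proof of Lemma \ref{lem:hankel}. Processing the innermost (rightmost) block of Toeplitz factors produces the shift $\sum_{\ell}\e_\ell' i_\ell$ over that block together with the accumulated entry factors; the ensuing $P_n$ then reflects the running index. Iterating, each subsequent block contributes its own shift, but with a sign that alternates according to the number of reflections it has already passed through, so that the index builds up into the alternating sum $\sum_{c}(-1)^{p-c}\sum_{\ell=k_{c-1}+1}^{k_c}\e_\ell'i_\ell$ (up to a final reflection when $p$ is odd). The argument at which the $t$-th entry factor is evaluated is precisely $m_{t,k_e}$ of (\ref{eqn:m_chi,t_Hn_g}), which coincides with the argument that sat inside $\chi_{[1,n]}$ in (\ref{eqn:m_chi,t_Hn}); the sole change is that here it lands inside the generalized-entry indicators $\one_{\AA}$ or $\one_{\BB}$ rather than inside $\chi_{[1,n]}$.

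Finally I would take the trace via $\Tr(A_n)=\sum_{j=1}^n e_j^t A_n e_j$, so that the surviving terms are governed by matching $j$ against the final basis index. When $p$ is even there is an even number of reflections, the final index is $j+\sum_{c}(-1)^{p-c}\sum_\ell\e'_\ell i_\ell$, and matching with $j$ gives $\d_{0,\sum_c(-1)^{p-c}\sum_\ell\e_\ell'i_\ell}$; when $p$ is odd the final index is $n+1-j-\sum_{c}(-1)^{p-c}\sum_\ell\e'_\ell i_\ell$, and matching gives $\d_{2j-1-n,\sum_c(-1)^{p-c}\sum_\ell\e'_\ell i_\ell}$. This yields the two displayed cases. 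The main bookkeeping obstacle is keeping the alternating signs on the block shifts correct and verifying that the evaluation point of each generalized entry is unchanged from the ordinary-Toeplitz computation; once the index recursion of Lemma \ref{lem:hankel} is available, the argument reduces to substituting the generalized-entry action in place of the $\chi_{[1,n]}$ factor.
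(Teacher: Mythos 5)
Your proposal is correct and is exactly the proof the paper intends: the paper explicitly skips the argument for Lemma \ref{lem:hankel_g}, remarking only that it can be fashioned out of the ideas in the proofs of Lemmas \ref{lem:hankel} and \ref{lem:tracegeneralT}, which is precisely your combination of the generalized-entry basis action $M_{n,g}^{\e}e_j=\sum_{i}(a^{\e}_i\one_{\AA_{\e'i}}+b^{\e}_i\one_{\BB_{\e'i}})(j)\,e_{j+\e'i}$ with the block-by-block $P_n$-reflection recursion and a final trace matching. One caveat: in the odd-$p$ case, matching $e_j$ against $e_{n+1-j-S}$ with $S=\sum_{c=1}^p(-1)^{p-c}\sum_{\ell=k_{c-1}+1}^{k_c}\e'_\ell i_\ell$ forces $S=n+1-2j$, i.e.\ the delta should read $\d_{n+1-2j,S}$ rather than $\d_{2j-1-n,S}$ as you assert (a sign discrepancy already present between the statement of Lemma \ref{lem:hankel} and its own proof, and one that is harmless downstream, since the odd-$p$ terms are only ever used through the $O(1/n)$ bound $\frac{1}{n}\sum_{j=1}^n \d_{\cdot,S}$), so your claim that this matching "gives" the stated delta does not literally follow from your derived final index and should either be corrected or flagged as the paper's convention.
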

We skip the proof of this lemma, but note that 
a proof can be fashioned out of the ideas 
%given using 
%we refer 
%the ideas 
used in the proofs of Lemmas \ref{lem:hankel} and \ref{lem:tracegeneralT}.
%Now  we prove Proposition \ref{thm:jc_D+P_g}.
\begin{proof} [Proof of Proposition \ref{thm:jc_D+P_g}]
We skip the detailed proof. But we briefly justify the 
value of the 
%only mention the 
limits. 
%For more details, 
Note also the ideas used in the proof of Proposition \ref{thm:jc_D+P}. From arguments similar to those used in the proof of  (\ref{eqn:lim_D*_n^p}), using the trace formula from Lemma \ref{lem:tracegeneralT}, we can show that for $\e_i \in \{1,*\}$ and $\tau_i \in \{1, \ldots,m\}$,
	\begin{align} \label{eqn:lim_D*_n^p_g}
		&\lim_{n \to \infty}  \vp_n( D^{(\tau_1)\e_1}_{n,g} \cdots D^{(\tau_p)\e_p}_{n,g}) \nonumber\\
		&=  \sum_{i_1, \ldots, i_{p}=-\infty }^{\infty} \int_{z_0=0}^{1} \prod_{t=1}^p (d^{'(\tau_t)\e_t}_{i_t}\one_{[0,1/2]}(z_0)+d^{''(\tau_t)\e_t}_{i_t}\one_{(1/2,1]} (z_0))  \d_{0,\sum_{\ell=1}^p \e'_\ell i_\ell} dz_0 \nonumber\\
		&= \sum_{i_1, \ldots, i_{p}=-\infty }^{\infty} \prod_{t=1}^p (\frac{1}{2} d^{'(\tau_t)\e_t}_{i_t}+ \frac{1}{2} d^{''(\tau_t)\e_t}_{i_t})  \d_{0,\sum_{\ell=1}^p \e'_\ell i_\ell}.
	\end{align}	
%	Now we deal with an arbitrary monomial from  the collection $\{P_n, D_{n,g}^{(\tau)}, 1 \leq \tau \leq \ell\}$. Note that for $\e_i \in \{ 1,*\}$ and $\tau_i \in \{1, \ldots, \ell\}$,	an arbitrary monomial from this collection
%	 looks like 
%	$$\big[(P_nD_{n,g}^{(\tau_1)\e_1} \cdots D_{n,g}^{(\tau_{k_1})\e_{k_1}})  \cdots (P_nD_{n,g}^{(\tau_{k_{p-1}})\e_{k_{p-1}+1}} \cdots D_{n,g}^{(\tau_{k_p})\e_{k_p}})\big]=	q_{k_1, \ldots, k_p}(\e, \tau), \mbox{ say}.$$
	
 Now from arguments similar to those used in establishing (\ref{eqn:lim_phi_P,D*}), for the collection $\{P_n, D_{n,g}^{(\tau)}; 1 \leq \tau \leq m\}$, using Lemma \ref{lem:hankel_g}, we have
\begin{align}\label{eqn:lim_phi_P,D*_g}
		&	\lim_{n \to \infty}  \vp_n \big[(P_nD_{n,g}^{(\tau_1)\e_1} \cdots D_{n,g}^{(\tau_{k_1})\e_{k_1}})  \cdots (P_nD_{n,g}^{(\tau_{k_{p-1}+1})\e_{k_{p-1}+1}} \cdots D_{n,g}^{(\tau_{k_p})\e_{k_p}})\big] \nonumber \\
		&	= 
		\l\{\begin{array}{lll}
			\displaystyle \sum_{i_1, \ldots, i_{k_p}=-\infty }^{\infty}  \prod_{t=1}^{k_p} (\frac{1}{2} d^{'(\tau_t)\e_t}_{i_t}+ \frac{1}{2} d^{''(\tau_t)\e_t}_{i_t}) \d_{0, \sum_{c=1}^p(-1)^{p-c} \sum_{\ell=k_{c-1}+1}^{k_{c}}  \e'_\ell i_\ell} &  \mbox{ if $p$ is even}, \\
			0&   \mbox{ if $p$ is odd}.
		\end{array}\r.
	\end{align}
	This completes the proof of Proposition \ref{thm:jc_D+P_g}.
\end{proof}

\subsection{Joint Convergence of \texorpdfstring{$T_{n,g}$ and $D_{n,g}$}{tngdng}}\label{subsec:td_g}
%In this section, we study the joint convergence of random and deterministic generalized Toeplitz matrices.
\begin{proposition} \label{thm:jc_T+D_g}
Let $B_{n,1}=n^{-1/2}T_{n,g}$ where $T_{n,g}$  is a  random generalized Toeplitz matrix. For $i=1,2, \ldots, m$, let $\{B_{n,1}^{(i)}\}$ be independent copies of $B_{n,1}$ and  $B_{n,2}^{(i)}=D^{(i)}_{n,g}$ be $m$ deterministic generalized Toeplitz matrices. Suppose the input entries $(a_j)_{j\in \Z}, (b_j)_{j\in \Z}$  of $T_{n,g}$  satisfy Assumption \ref{assump:toe_gII} and the input entries $(d'_j)_{j\in \Z}, (d^{''}_j)_{j\in \Z}$ of $D_{n,g}$  satisfy Assumption \ref{assump:determin}. 
 Then $\{B_{n,j}^{(i)}; 1\leq i\leq m, 1\leq j \leq 2\}$ converge jointly.
	The limit $*$-moments are as given in (\ref{eqn:lim_ph(Dm*,T*)_g}).
\end{proposition}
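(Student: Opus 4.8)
The plan is to follow the four-step template that produced Proposition~\ref{thm:jc_T+D}, now carrying along the region-indicators $\one_{\AA}$ and $\one_{\BB}$ that separate the $a$- and $b$-parts of a generalized Toeplitz matrix. By trace cyclicity and multilinearity it suffices to compute $\lim_{n\to\infty}\vp_n$ of an arbitrary $*$-monomial $A_{n,\mu_1}^{(\tau_1)\e_1}\cdots A_{n,\mu_k}^{(\tau_k)\e_k}$, where $A_{n,1}^{(\cdot)}=n^{-1/2}T_{n,g}^{(\cdot)}$ and $A_{n,2}^{(\cdot)}=D_{n,g}^{(\cdot)}$, with $\mu_i\in\{1,2\}$, $\e_i\in\{1,*\}$ and $\tau_i\in\{1,\dots,m\}$. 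The first move is to establish a generalized version of Remark~\ref{rem:obse_D,D_m}: replacing each $D_{n,g}$ by its bandwidth-$n^{\alpha}$ truncation (defined as in \eqref{def:D_n,m}, applied to both sequences $d'$ and $d''$) leaves every limiting $*$-moment unchanged. This is the domination argument of Lemma~\ref{lem:lim,D_Dn,m} verbatim, since $\sum_k|d'_k|<\infty$ and $\sum_k|d''_k|<\infty$ by Assumption~\ref{assump:determin} and the region-indicators are bounded by $1$. After this reduction all deterministic indices $i_t$ attached to $D$-factors range only over $[-(n^{\alpha}-1),\,n^{\alpha}-1]$.

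Next I would apply the trace formula of Lemma~\ref{lem:tracegeneralT} to the truncated monomial, producing a single sum over the column index $j$ and over $(i_1,\dots,i_k)\in I_k'$ with the constraint \eqref{iprime}, each $T$-factor carrying the weight $a^{(\tau_t)\e_t}_{i_t}\one_{\AA_{\e_t'i_t}}(m_t)+b^{(\tau_t)\e_t}_{i_t}\one_{\BB_{\e_t'i_t}}(m_t)$ and each $D$-factor the analogous deterministic weight built from $d',d''$. Taking expectations over the independent random entries, the counting in the proof of Proposition~\ref{thm:generalT_com} applies: the contribution is $o(1)$ unless both $k$ and the number $w$ of $T$-factors are even, only pair-partitions $\pi$ of the $T$-positions survive, and within each pair only the match $i_r=i_s$ (equivalently $\e_r\neq\e_s$) contributes, the match $i_r=-i_s$ being excluded because $a_j\perp a_{-j}$ under Assumption~\ref{assump:toe_gII}. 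The global constraint in \eqref{iprime} then forces the signed sum of the $D$-indices to vanish on its own.

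The final step is the Riemann-sum passage to the limit. For a $T$-position the scaled argument $m_t/n$ converges to $z_0=j/n$ plus a genuine $O(1)$ partial sum of the surviving pair-variables, reproducing the sets $\tilde\AA_z,\tilde\BB_z$ of \eqref{eqn:A_x,B_x} and the kernels $\mathcal E'_{r,s}$ of \eqref{eqn:mathE'(z2k)_T}. For a $D$-position the attached index is negligible, so $\one_{\AA_{\e_t'i_t}}(m_t)\to\one_{[0,1/2]}$ and $\one_{\BB_{\e_t'i_t}}(m_t)\to\one_{(1/2,1]}$, the threshold $\tfrac12$ coming from the boundary $\tfrac12(n-i_t)$ in \eqref{eqn:AA_i,BB_i}; these yield exactly the averaged deterministic weights $\tfrac12 d'+\tfrac12 d''$ already seen in \eqref{eqn:lim_D*_n^p_g}. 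Assembling the surviving summands and invoking convergence of Riemann sums produces the stated formula \eqref{eqn:lim_ph(Dm*,T*)_g}, which thereby combines the $D_{n,g}$-weights of Proposition~\ref{thm:jc_D+P_g} with the $T_{n,g}$-kernel of Proposition~\ref{thm:generalT_com}.

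The main obstacle is exactly the feature that distinguishes this from Proposition~\ref{thm:jc_T+D}: there the deterministic factor \eqref{eqn:lim_D*_n^p} was a pure number, so it split off trivially, whereas here the single column variable $z_0$ is \emph{shared} between the $D$- and $T$-positions. For a $D$-position the collapsed indicator is evaluated not at $z_0$ but at $z_0$ shifted by the partial sum of the order-$n$ random indices lying to its right, so the deterministic and random families do not a priori decouple. The crux of the argument is to show, as in \eqref{eqn:chi_D_remove}, that once the $T$-pairing relation $\e_r'i_r+\e_s'i_s=0$ is imposed these partial-sum shifts are asymptotically removable from the $D$-position indicators; this is immediate when the $D$-factors are grouped together (the enclosed $T$-indices then sum to zero) and must be propagated to interspersed arrangements by the same negligibility estimate used in the truncation step. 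I expect the bookkeeping of these shifted half-interval indicators through the Riemann limit, rather than any new analytic input, to be the delicate part of the proof.
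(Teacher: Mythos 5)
Your architecture coincides with the paper's: truncate the deterministic matrices via a generalized Remark \ref{rem:obse_D,D_m}, apply the trace formula of Lemma \ref{lem:tracegeneralT}, run the pairing count of Proposition \ref{thm:generalT_com} (only pairs with $i_r=i_s$, $\e_r\neq\e_s$ survive, the cross-match $i_r=-i_s$ dying by independence of $a_j,a_{-j}$, and the residual delta constraining the $D$-indices alone), then pass to Riemann sums so that the weights $\tfrac12 d'+\tfrac12 d''$ of \eqref{eqn:lim_D*_n^p_g} combine with the kernel \eqref{eqn:mathE'(z2k)_T} inside a common $z_0$-integral. The paper's own proof only carries this out for the grouped monomial $D_{n,g}^p(n^{-1/2}T_{n,g})^q$ and declares the general interspersed case "similar", so your write-up is, if anything, more explicit about where the difficulty lies. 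You also correctly identify the one structural novelty relative to Proposition \ref{thm:jc_T+D}: the limit no longer factors, because $z_0$ is shared between the $D$-weights and the $T$-kernel.

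However, your proposed resolution of the interspersed case contains a genuine error. You claim the suffix-sum shifts in the $D$-position indicators are "asymptotically removable \ldots by the same negligibility estimate used in the truncation step". That estimate absorbs only shifts of size $O(n^{\alpha})$ coming from truncated $D$-indices; the shift at a $D$-position lying \emph{inside} a crossing pair is of order $n$ and survives in the limit. Concretely, for $\vp_n\big(n^{-1}T_{n,g}\,D_{n,g}\,T_{n,g}^{*}\big)$ the pairing forces $i_1=i_3=i$ and $i_2=0$, and by Lemma \ref{lem:tracegeneralT} the $D$-indicator is evaluated at $j-i_3$, hence in the limit at $z_0-z$ with $z$ a genuine integration variable: the indicator becomes $\one_{[0,1/2]}(z_0-z)$, not $\one_{[0,1/2]}(z_0)$. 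The shift vanishes identically only when the suffix contains both legs or neither leg of every surviving pair — which is exactly the grouped situation the paper computes, and which for a \emph{single} contiguous block of $D$'s one can always reach by trace cyclicity (here $\vp_n(TDT^{*})=\vp_n(DT^{*}T)$, and a change of variables in $z_0$ reconciles the two expressions). But with $D$-factors in two or more separated blocks, each crossed by different pairs, the shifts are distinct linear combinations of pair variables and no single rotation or translation removes them all; the correct limit keeps the half-interval $D$-indicators at the shifted arguments $z_0+\sum(\text{suffix of }\pm z_{\pi'(\ell)})$ inside the integral. Your closing sentence about "bookkeeping of these shifted half-interval indicators" is the right instinct and is in tension with the removability claim that precedes it; note that the displayed formula \eqref{eqn:lim_ph(Dm*,T*)_g}, which shows the weight $d'\one_{[0,1/2]}(z_0)+d''\one_{(1/2,1]}(z_0)$ at the unshifted $z_0$, is itself justified in the paper only for the grouped arrangement, so the shifted-indicator version is what a complete proof of the general monomial must actually establish.
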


\begin{proof}
Here again we outline the justification for the limit moments. For more details, see the proof of Proposition \ref{thm:jc_T+D}.	First note that the limit of $ \vp_n(D_{n, g}^p (n^{-1/2}T_{n,g})^q)$ will be zero if $q$ is odd. Let $q=2k$, then from the idea of the proof of (\ref{eqn:lim_ph(Dm,T)})  and  the proof of Proposition \ref{thm:jc_D+P_g}, we have 
	\begin{align*} %\label{eqn:lim_ph(Dm,T)g}
		 \lim_{n \to \infty} \vp_n(D_{n, g}^p (n^{-1/2}T_{n,g})^{2k}) 
		&= \int_{z_0=0}^{1} \sum_{i_1, \ldots, i_{p}=-\infty }^{\infty} \prod_{t=1}^p (d^{'(\tau_t)\e_t}_{i_t}\one_{[0,1/2]}(z_0)+d^{''(\tau_t)\e_t}_{i_t}\one_{(1/2,1]} (z_0))   \nonumber \\ 
		&  \qquad \times \d_{0,\sum_{\ell=1}^p \e'_\ell i_\ell} \sum_{\pi\in \mathcal P_2(2k)} \int_{[-1,1]^k}\prod_{(r,s)\in \pi} \mathcal E_{r,s}'({\underline z_{2k}}) \prod_{i=0}^k dz_i,
	\end{align*}
	where the existence of the first limit is given in (\ref{eqn:lim_D*_n^p_g}) and $\mathcal E_{r,s}'({\underline z_{2k}})$ is as in (\ref{eqn:mathE'(z2k)_T}) with $\e_1 = \cdots= \e_{2k}=1$.
	
	For an arbitrary monomial $B_{n,\mu_1}^{(\tau_1) \e_1} B_{n,\mu_2}^{(\tau_2) \e_2} \cdots B_{n,\mu_q}^{(\tau_q) \e_q}$ with  $B^{(\tau_i) \e_i}_{n,1}=n^{-1/2}T^{(\tau_i) \e_i}_{n,g}$ and $B^{(\tau_i) \e_i}_{n,2}=D^{(\tau_i) \e_i}_{n,g}$, let $\mathcal{I}_p= (v_1, v_2, \ldots, v_p)$ be the indices corresponding to the positions of $D_{n,g}$ in the monomial and $R= [q] \setminus \mathcal{I}_p$. Note that if the cardinality of $R$ is odd, then the limit will be zero. Let $\# R$ be even, say $2k$, then similar to the above expression, we can show that
	\begin{align} \label{eqn:lim_ph(Dm*,T*)_g}
		\lim_{n \to \infty} \vp_n \big( B_{n,\mu_1}^{(\tau_1) \e_1} B_{n,\mu_2}^{(\tau_2) \e_2} \cdots B_{n,\mu_q}^{(\tau_q) \e_q} \big)   \nonumber 
		& = \hskip-5pt \int_{z_0=0}^{1} \sum_{i_{v_1}, \ldots, i_{v_{p}}=-\infty }^{\infty} \prod_{t\in \mathcal{I}_p}(d^{'(\tau_t)\e_t}_{i_t}\one_{[0,1/2]}(z_0)+d^{''(\tau_t)\e_t}_{i_t}\one_{(1/2,1]} (z_0))  \nonumber \\ 
		& \quad \times  \d_{0,\sum_{\ell=1}^p \e'_{v_\ell} i_{v_\ell}}  \sum_{\pi \in \mathcal P_2(R)} \int_{[-1,1]^k}\prod_{(r,s)\in \pi} \mathcal E_{r,s}'({\underline z_{2k}}) \prod_{i=0}^k dz_i,
	\end{align}
	where $\mathcal P_2(R)$ denotes the set of all pair-partitions of set $R$ and $\mathcal E_{r,s}'({\underline z_{2k}})$ is as in (\ref{eqn:mathE'(z2k)_T}) for the pair-partition of set $R$. This completes the proof of the proposition.
\end{proof}

\subsection{Joint Convergence of \texorpdfstring{$T_{n,g}$ and $P_n$}{tngpn}}\label{subsec:tp_g}
%This section is dedicated for the joint convergence of $T_{n,g}$ and $P_n$, and $H_n$.  
\begin{proposition} \label{thm:jc_T+P_g}
	Suppose $\{T_{n,g}^{(i)}; 1\leq i\leq m\}$ are $m$ independent copies of generalized Toeplitz matrices whose input entries satisfy Assumption \ref{assump:toe_gII}. Then $\{P_n, n^{-1/2}T_{n,g}^{(i)}; 1 \leq i \leq m\}$ converge jointly with the limit $*$-moments as given in (\ref{eqn:lim_phi(Te*,P)_g}).
\end{proposition}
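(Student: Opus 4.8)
The plan is to follow the proof of Proposition \ref{thm:jc_T+P} step for step, but with the single-band trace formula (Lemma \ref{lem:hankel}) replaced by its two-band analogue (Lemma \ref{lem:hankel_g}) and with the correlation bookkeeping of Proposition \ref{thm:generalT_com} substituted for that of Proposition \ref{thm:toeplitz}. As in the classical case, every even power of $P_n$ is $I_n$ and every odd power is $P_n$, so by cyclicity of the trace it suffices to establish convergence of
\[
q_{k_1,\ldots,k_p}(\tau,\e) := \Big( P_n\, \tfrac{T_{n,g}^{(\tau_1)\e_1}}{\sqrt n}\cdots \tfrac{T_{n,g}^{(\tau_{k_1})\e_{k_1}}}{\sqrt n}\Big)\cdots \Big( P_n\, \tfrac{T_{n,g}^{(\tau_{k_{p-1}+1})\e_{k_{p-1}+1}}}{\sqrt n}\cdots \tfrac{T_{n,g}^{(\tau_{k_p})\e_{k_p}}}{\sqrt n}\Big),
\]
for $0\le k_1\le\cdots\le k_p$. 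Applying Lemma \ref{lem:hankel_g} expresses $\vp_n(q_{k_1,\ldots,k_p}(\tau,\e))$ as a sum over $I_{k_p}$ of products of the two-band weights $(a^{(\tau_t)\e_t}_{i_t}\one_{\AA_{\e_t'i_t}}+b^{(\tau_t)\e_t}_{i_t}\one_{\BB_{\e_t'i_t}})(m_{t,k_e})$ against a Kronecker constraint that is $\d_{0,\cdot}$ when $p$ is even and $\d_{2j-1-n,\cdot}$ when $p$ is odd.

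First I would dispose of the degenerate cases. When $p$ is odd, the constraint $\d_{2j-1-n,\cdot}$ pins $j$ to at most one value for each fixed index tuple, so $\tfrac1n\sum_j$ is $O(1/n)$ and the moment vanishes, as in \eqref{eqn:phi_PD_o(1)odd}. When $p$ is even, the mean-zero, variance-one hypothesis of Assumption \ref{assump:toe_gII} forces only pair-partitions $\pi\in\mathcal P_2(k_p)$ to contribute, by the degree-of-freedom count of \eqref{eqn:phi_To(1)odd}; in particular $k_p$ must be even, say $k_p=2k$, or the limit is zero.

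For $p$ even and $k_p=2k$ the substance lies in computing, for each block $(r,s)\in\pi$, the pair expectation $\mathcal E^{(T,P)}_{r,s}$. Writing the $P_n$-induced alternating signs $\nu_c$ as in \eqref{eqn:nu_t_oddeven} and recording the blocks $c,c'$ in which $r,s$ sit, the free-index count is $k$ precisely when $\nu_c\e_r'i_r+\nu_{c'}\e_s'i_s=0$ for every block. Since under Assumption \ref{assump:toe_gII} the quadruples at distinct indices are independent, the only surviving matches are $i_r=i_s$, which now occurs when $\nu_c\nu_{c'}\e_r'\e_s'=-1$. A key structural difference from the pure generalized-Toeplitz computation of Proposition \ref{thm:generalT_com} is that the $\nu$-signs now permit $\e_r=\e_s$ as well as $\e_r\neq\e_s$ at a match, so the self-second-moments $\E[a_j^2],\E[a_jb_j],\E[b_j^2]$ (not merely $\E|a_j|^2=1$) enter the weight, exactly as the $\sigma_x^2-\sigma_y^2$ and $2\mathrm{i}\rho$ terms do in \eqref{eqn:EHs_ars_epsilon}. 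Splitting each factor according to whether its argument $m_{t,k_e}$ lands in $\AA$ or $\BB$ produces the four indicator products $f_1,\ldots,f_4$ of Proposition \ref{thm:generalT_com}, and I would package the $a/b$ moments of Assumption \ref{assump:toe_gII} against these four products and against the $i_r=i_s>0$ versus $i_r=i_s<0$ alternatives into the single block weight $\mathcal E^{(T,P)}_{r,s}$.

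Finally I would rescale $i_t/n\to z_{\pi'(t)}$ and $j/n\to z_0$, replacing the discrete bands $\AA,\BB$ by the continuous ones $\tilde\AA_z,\tilde\BB_z$ of \eqref{eqn:A_x,B_x} and passing the multiple sum to an integral over $[0,1]\times[-1,1]^k$ via convergence of Riemann sums, just as in the step from \eqref{eqn:EHs_2k} to \eqref{eqn:lim_phi(T*,P)}; the $m$-matrix version then follows by inserting $\d_{\tau_r,\tau_s}$ into every block weight, yielding \eqref{eqn:lim_phi(Te*,P)_g}. The main obstacle is assembling $\mathcal E^{(T,P)}_{r,s}$ consistently: one must track simultaneously the sign $\nu_c\nu_{c'}\e_r'\e_s'$, which decides the match and selects which correlation coefficients appear, and the placement of each of $m_r,m_s$ inside $\AA$ or $\BB$, which decides whether the $a$-, $b$- or cross-moment is taken. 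This is where the difficulties of Propositions \ref{thm:jc_T+P} and \ref{thm:generalT_com} genuinely compound; once the block weight is correctly pinned down, the Riemann-sum limit and the extension to several independent families are routine.
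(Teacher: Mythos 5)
Your proposal is correct and follows essentially the same route as the paper: the paper's own (sketched) proof is exactly the combination you describe, namely the trace formula of Lemma \ref{lem:hankel_g}, the pair-partition and matching analysis of Proposition \ref{thm:generalT_com} (only $i_r=i_s$ survives by independence across indices, with the $P_n$-induced signs $\nu_c$ now allowing $\e_r=\e_s$ so that the self-moments $2\mathrm{i}\rho_1$, $2\mathrm{i}\rho_5$ and the cross-moments enter, as worked out for the pure-Hankel monomial $H_n=P_nT_{n,g}$ in Remark \ref{cor:jc_H_g}), followed by the Riemann-sum passage to (\ref{eqn:lim_phi(Te*,P)_g}) with $\d_{\tau_r,\tau_s}$ inserted for $m$ independent copies. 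The only cosmetic differences are that you handle the general interspersed monomial directly rather than first detailing the Hankel special case, and your split into $i_r=i_s>0$ versus $i_r=i_s<0$ is superfluous here, since the correlations in Assumption \ref{assump:toe_gII} do not depend on the sign of the index.
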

	Note that  from the collection $\{P_n, T^{(i)}_{n,g}; 1 \leq i \leq  m \}$, it is enough to check the convergence for  monomial $(P_nT_{n,g}^{(\tau_1)\e_1} \cdots T_{n,g}^{(\tau_{k_1})\e_{k_1}})  \cdots (P_nT_{n,g}^{(\tau_{k_{p-1}+1})\e_{k_{p-1}+1}} \cdots T_{n,g}^{(\tau_{k_p})\e_{k_p}})$. 	Recall that $P_n T_{n,g}=H_n$. For simplicity, we first provide the arguments 
for the 
%following 
monomial
$H_{n}^{(\tau_1)\e_1} \cdots H_{n}^{(\tau_{p})\e_{p}}$.
For the general case, similar arguments will work. Also see the proof of Proposition \ref{thm:jc_T+P}.
Now under Assumption \ref{assump:toe_gII}, the following remark provides the joint convergence of Hankel matrices. %for entries as \textit{complex} random variables with a \textit{pair-correlation} structure.
\begin{remark}\label{cor:jc_H_g}
	Let $\{H^{(i)}_{n}; 1 \leq i \leq m\}$ be $m$ independent copies of Hankel matrices with the input sequences $(a^{(i)}_j)_{j\in \Z}, (b^{(i)}_j)_{j\in \Z}$ which  satisfy Assumption \ref{assump:toe_gII}. Then, for $\e_1,\ldots, \e_{p}\in \{1,*\}$, and $\tau_1,\ldots, \tau_{p}\in \{1,\ldots, m\}$,
	\begin{align} \label{eqn:lim_mome_A3_Hgen_com}
		 \lim_{n\to \infty}\vp_n(\dfrac{H^{(\tau_1) \e_1}_{n}}{n^{1/2}}\cdots \dfrac{H^{(\tau_p)\e_p}_{n}}{n^{1/2}}) 
		%&= \sum_{\pi \in {\mathcal P}_2(2k)} \prod_{(r,s)\in \pi} \theta(r,s) \int_{0}^1\int_{[-1,1]^k}\prod_{\ell=1}^{2k}\chi_{[0,1]}\big(z_0+\sum_{t=\ell}^{2k}\e_{\pi}(t) z_{\pi'(t)}\big)\prod_{i=0}^k dz_i.%dx_0dx_1\cdots dx_k.
		& =\l\{\begin{array}{lll}
			\displaystyle \hskip-5pt \sum_{\pi\in \mathcal P_2(2k)}\int_{0}^1\int_{[-1,1]^k}\prod_{(r,s)\in \pi} \hskip-5pt \d_{\tau_r,\tau_s} \mathcal E_{r,s}^{(H)}({\underline z_{2k}}) \prod_{i=0}^k dz_i &\mbox{ if } p=2k, \\
			0 & \hskip-10pt \mbox{ if } p=2k+1,
		\end{array}\r.
	\end{align}
	where $\mathcal E_{r,s}^{(H)}({\underline z_{2k}})$ is as in (\ref{eqn:mathE'(z2k)_H}).

%\begin{proof}[Proof of Corollary \ref{cor:jc_H_g}]	
 We need the following trace formula for the proof of (\ref{eqn:lim_mome_A3_Hgen_com}): Suppose $\{H^{(\tau)}_{n}; 1 \leq \tau \leq m \}$ are the Hankel matrices with input sequences $(a^{(\tau)}_i)_{i\in \Z}$ and $(b^{(\tau)}_i)_{i\in \Z}$. Then we have
	\begin{align*} %\label{eqn:trace_Hg}
		&\Tr[H_{n}^{(\tau_1) \e_1}\cdots H_{n}^{(\tau_p) \e_p}] \nonumber \\
	&= \l\{\begin{array}{lll}
	\displaystyle \sum_{j=1}^n \prod_{t=1}^p (a^{(\tau_t) \e_t}_{i_t}\one_{\AA_{\e_t'i_t}} +b^{(\tau_t) \e_t}_{i_t}\one_{\BB_{\e_t'i_t}})(m_t) \d_{0,\sum_{\ell=1}^p i_\ell (-1)^{p-\ell}}  & \mbox{ if $p$ is even}, \\
	\displaystyle	\sum_{j=1}^n \prod_{t=1}^p(a^{(\tau_t) \e_t}_{i_t} \one_{\AA_{\e_t'i_t}} +b^{(\tau_t) \e_t}_{i_t}\one_{\BB_{\e_t'i_t}})(m_t) \d_{2j-1-n,\sum_{\ell=1}^pi_\ell (-1)^{p-\ell}} & \mbox{ if $p$ is odd},
		\end{array}	\r.
	\end{align*}
	%	\begin{align} \label{eqn:trace_Hg}
	%	\Tr[H_{n}^{(\tau_1) \e_1}\cdots H_{n}^{(\tau_p) \e_p}] =
	%	\l\{\begin{array}{rl}
	%		\displaystyle \sum_{j=1}^n\sum_{I_p^{e}} \prod_{t=1}^p(a^{(\tau_t)}_{i_t}\one_{\AA_{\e_t'i_t}}+b^{(\tau_t)}_{i_t}\one_{\BB_{\e_t'i_t}})(m_t) & \mbox{ if $p$ is even}, \\
	%		\displaystyle	\sum_{j=1}^n\sum_{I_p^{o}} \prod_{t=1}^p(a^{(\tau_t)}_{i_t}\one_{\AA_{\e_t'i_t}}+b^{(\tau_t)}_{i_t}\one_{\BB_{\e_t'i_t}})(m_t) & \mbox{ if $p$ is odd}.
	%	\end{array}
	%	\r.
	%\end{align}
	where for any $i \in \mathbb{Z}$, $\AA_i, \BB_i$ are as in (\ref{eqn:AA_i,BB_i});
	for $t=1,2, \ldots,(p-1)$,
	\begin{align} \label{eqn:m_t_Hn}
       m_t =
	 j+\sum_{\ell=t}^p  i_\ell (-1)^{p-\ell} \mbox{ with $m_p=j$}.
	\end{align}
	We skip the proof of this trace formula and refer to the proof of Lemma \ref{lem:tracegeneralT}. 
	
 Now we prove (\ref{eqn:lim_mome_A3_Hgen_com}).	For simplicity of notation, we first consider a single matrix. The same idea will work for $m$ matrices also.	By arguments similar to those used in the proof of Proposition \ref{thm:toeplitz}, we conclude that the limit of odd $*$-moments will be zero. Let $p=2k$.  Then using the above trace formula, we have
	\begin{align*}
		\vp_n( \frac{H_{n}^{\e_1}}{\sqrt{n}} \cdots \frac{H_{n}^{\e_{2k}}}{\sqrt{n}}) &=\frac{1}{n^{k+1}}\sum_{j=1}^n\sum_{I_{2k}^{''}}\E[\prod_{t=1}^{2k}(a^{\e_t}_{i_t}\one_{\AA_{\e_t'i_t}}(m_t)+b^{\e_t}_{i_t}\one_{\BB_{\e_t'i_t}}(m_t))],
		%\\&=\sum_{\pi\in \mathcal P_2(2k)}\frac{1}{n^{k+1}}\sum_{j=1}^n\sum_{I_{2k}'}\prod_{t=1}^{p}(a_{i_t}\one_{\AA_{\e_t'i_t}}(j+s_t)+b_{i_t}\one_{\BB_{\e_t'i_t}}(j+s_t)).
	\end{align*}
where $I_{2k}^{''}$ is as in (\ref{eqn:I'2k_Hs}) and $m_t$ is as in (\ref{eqn:m_t_Hn}).
Note that, in the above expression, only the pair-partitions will contribute in the limit, and therefore
	\begin{align*}
	\vp_n( \frac{H_{n}^{\e_1}}{\sqrt{n}} \cdots \frac{H_{n}^{\e_{2k}}}{\sqrt{n}})
		&=\sum_{\pi\in \mathcal P_2(2k)}\frac{1}{n^{k+1}} \sum_{j=1}^n\sum_{I_{2k}^{''}(\pi)}\prod_{(r,s)\in \pi} \mathcal E_{r,s}(\underline i_{2k})+o(1),
	\end{align*}
	where $I_{2k}^{''}(\pi)=\{(i_1,\ldots, i_{2k})\in I_{2k}^{''} \; :\; \prod_{(r,s)\in \pi}\E[a^{\e_r}_{i_r} a^{\e_s}_{i_s}]\neq 0\}$ and for $\underline i_{2k}=(j,i_1,\ldots, i_{2k})$  
	\begin{align*}
		\mathcal E_{r,s}(\underline i_{2k})=\E\l[\l(a^{\e_r}_{i_{r}}\one_{\AA_{\e_r'i_r}}(m_r)+b^{\e_r}_{i_r}\one_{\BB_{\e_r'i_r}}(m_r)\r)\l(a^{\e_s}_{i_{s}}\one_{\AA_{\e_s'i_s}}(m_s)+b^{\e_s}_{i_s}\one_{\BB_{\e_s'i_s}}(m_s)\r)\r].
	\end{align*}
Observe that, the constraint on the indices, $\sum_{\ell=1}^{2k} i_\ell (-1)^{2k-\ell}$ can also be written as  $\sum_{\ell=1}^{2k}\nu_\ell i_\ell$, where $\nu_\ell$ is as in (\ref{eqn:nu_t_oddeven}).
Thus, in terms of a pair-partition, the constraint on the indices will be $\sum_{(r,s)\in \pi}(\nu_ri_r+\nu_si_s)=0$. Now 
using arguments similar to those used in the proof of Proposition \ref{thm:toeplitz} and Remark \ref{cor:com_HJc}, we have a non-zero contribution in the limit if and only if $(\nu_ri_r+\nu_si_s)=0$ for each pair $(r,s)\in \pi$, which is equivalent to the following
	\begin{align*}
		i_{r}=
		\l\{\begin{array}{lll}
			i_{s} & \mbox{ if } & \nu_{r} \nu_{s}=-1,\\ 
			-i_{s} & \mbox{ if} & \nu_{r} \nu_{s}=1.
		\end{array}
		\r.
	\end{align*}
Note that $\{(a_{j}= x_{j} + \mathrm{i} y_{j}, b_{j}= x'_{j}+ \mathrm{i} y'_{j}); j \in \mathbb{Z}\}$  are independent and satisfy Assumption \ref{assump:toe_gII}, therefore
	\begin{align*}
		\E[a^{\e_r}_{i_r}a^{\e_s}_{i_s}] 
		&=  	\l\{\begin{array}{lll}
			(1-\d_{\nu_{r}, \nu_{s}}) & \mbox{ if } & \e_r \neq \e_s,\\ 
			2i\rho_1(1-\d_{\nu_{r}, \nu_{s}}) & \mbox{ if} & \e_r = \e_s=1, \\
	    	-2i\rho_1(1-\d_{\nu_{r}, \nu_{s}}) & \mbox{ if} & \e_r = \e_s=*, \\
		\end{array}
		\r. \\
	&	= (1-\d_{\nu_{r}, \nu_{s}}) \big[\e'_{r} 2i\rho_1 \big]^{\d_{\e'_{r}, \e'_{s} }}, 
		\end{align*}
	where $\e'=1$ if $\e=1$ and $\e'=-1$ if $\e=*$. Similarly,
		\begin{align*}
		\E[b^{\e_r}_{i_r}b^{\e_s}_{i_s}] 
		&=  	\l\{\begin{array}{lll}
			(1-\d_{\nu_{r}, \nu_{s}}) & \mbox{ if } & \e_r \neq \e_s,\\ 
			2i\rho_5(1-\d_{\nu_{r}, \nu_{s}}) & \mbox{ if} & \e_r = \e_s=1, \\
			-2i\rho_5(1-\d_{\nu_{r}, \nu_{s}}) & \mbox{ if} & \e_r = \e_s=*, \\
		\end{array}
		\r. \\
		&	= (1-\d_{\nu_{r}, \nu_{s}}) \big[\e'_{r} 2i\rho_5 \big]^{\d_{\e'_{r}, \e'_{s} }}.
	\end{align*}
Also, from a similar calculation, we have
	\begin{align*}
\E[a^{\e_r}_{i_r}b^{\e_s}_{i_s}]
	&	= (1-\d_{\nu_{r}, \nu_{s}}) \big[\rho_2+\rho_6 + \e'_{r} \mathrm{i}(\rho_4-\rho_3) \big]^{(1-\d_{\e'_{r}, \e'_{s} })} \big[\rho_2-\rho_6 + \e'_{r} \mathrm{i}(\rho_4+\rho_3) \big]^{\d_{\e'_{r}, \e'_{s} }}, \\
	\E[a^{\e_s}_{i_r}b^{\e_r}_{i_s}]
	&	= (1-\d_{\nu_{r}, \nu_{s}}) \big[\rho_2+\rho_6 + \e'_{s} \mathrm{i}(\rho_4-\rho_3) \big]^{(1-\d_{\e'_{r}, \e'_{s} })} \big[\rho_2-\rho_6 + \e'_{s} \mathrm{i}(\rho_4+\rho_3) \big]^{\d_{\e'_{r}, \e'_{s} }}.
\end{align*}
Thus
\begin{align*}
 \mathcal E_{r,s}(\underline i_{2k}) 
		 =&\Big[ \big[\e'_{r} 2i\rho_1 \big]^{\d_{\e'_{r}, \e'_{s} }} f_1+ \big[\e'_{r} 2i\rho_5 \big]^{\d_{\e'_{r}, \e'_{s} }}f_4 +  \big[\rho_2+\rho_6 + \e'_{r} \mathrm{i}(\rho_4-\rho_3) \big]^{(1-\d_{\e'_{r}, \e'_{s} })} \\
		& \quad \times  \big[\rho_2-\rho_6 + \e'_{r} \mathrm{i}(\rho_4+\rho_3) \big]^{\d_{\e'_{r}, \e'_{s} }} f_2 +  \big[\rho_2+\rho_6 + \e'_{s} \mathrm{i}(\rho_4-\rho_3) \big]^{(1-\d_{\e'_{r}, \e'_{s} })} \\
		& \quad \times \big[\rho_2-\rho_6 + \e'_{s} \mathrm{i}(\rho_4+\rho_3) \big]^{\d_{\e'_{r}, \e'_{s} }} f_3 \Big] (1-\d_{\nu_{r}, \nu_{s}}),
	\end{align*}
	where %$\eta(i_r)=(-1)^{\d_{\nu_{r} \nu_{s}}}i_r$ and 
	\begin{align*}
		&f_1=\one_{\AA_{\e_r'i_r}}(m_r)\one_{\AA_{\e_s'i_s}}(m_s),\; f_2=\one_{\AA_{\e_r'i_r}}(m_r)\one_{\BB_{\e_s'i_s}}(m_s),\\&
		f_3=\one_{\AA_{\e_s'i_s}}(m_s)\one_{\BB_{\e_r'i_r}}(m_r),\; 
		f_4=\one_{\BB_{\e_r'i_r}}(m_r)\one_{\BB_{\e_s'i_s}}(m_s),
	\end{align*}
with $m_t$ as in (\ref{eqn:m_t_Hn}). Note that for even $p$, say $2k$, $m_t$ can also be written as the following
	\begin{align} \label{eqn:m_t_nut_Hn}
	m_t = \displaystyle	j+\sum_{\ell=t}^{2k} \nu_\ell i_\ell,
\end{align}
where $\nu_\ell$ is as in (\ref{eqn:nu_t_oddeven}).	Observe that, $\mathcal E_{r,s}(\underline i_{2k})$ implies that the number of free indices among $\{i_1,\ldots, i_{2k}\}$ is $k$, as the indices have relations $i_r=i_s$.
%	\begin{align*}
%		i_s=(-1)^{\d_{\nu_{r} \nu_{s}}}i_r.
%	\end{align*}
	Let $m_t'=\frac{1}{n}(m_t)$, where $m_t$ is as in (\ref{eqn:m_t_nut_Hn}). Then we have 
	\begin{align*}
		&f_1=\one_{\AA_{\e_r'\frac{i_r}{n}}}(m_r')\one_{\AA_{\e_s'\frac{i_s}{n}}}(m_s'),\; f_2=\one_{\AA_{\e_r'\frac{i_r}{n}}}(m_r')\one_{\BB_{\e_s'\frac{i_s}{n}}}(m_s'),\\&
		f_3=\one_{\AA_{\e_s'\frac{i_s}{n}}}(m_s')\one_{\BB_{\e_r'\frac{i_r}{n}}}(m_r'),\; 
		f_4=\one_{\BB_{\e_r'\frac{i_r}{n}}}(m_r')\one_{\BB_{\e_s'\frac{i_s}{n}}}(m_s').
	\end{align*}
	Let $\tilde \AA_z$ and $\tilde \BB_z$ be as in (\ref{eqn:A_x,B_x}).
	%\begin{equation*} %\label{eqn:A_x,B_x_H}
	%	\tilde \AA_z=[\max\{0,-z\}, \frac{1}{2}(1-z)], \ \tilde\BB_z=[\frac{1}{2}(1-z), \min\{1-z,1\}].
	%\end{equation*}
	Recall, if $\pi=(r_1,s_1)\cdots (r_k,s_k)$, then we have $\pi'(r_t)=\pi'(s_t)=t$.
	% and $\xi_{\pi}(r_t)=(-1)^{\d_{\nu_{r_t} \nu_{s_t}}}$. 
	Now for a set of variables $z_0, z_1, \ldots, z_{2k}$, we define $w_d=(z_0+\sum_{\ell=d}^{2k} \nu_\ell z_{\pi'(\ell)})$,
	 also define
	\begin{align*} %\label{eqn:f'1234_H}
		&f_1'=\one_{\tilde\AA_{\e_r'z_{\pi'(r)}}}(w_r)\one_{\tilde\AA_{\e_s'z_{\pi'(s)}}}(w_s),\; f_2'=\one_{\tilde\AA_{\e_r'z_{\pi'(r)}}}(w_r)\one_{\tilde\BB_{\e_s'z_{\pi'(s)}}}(w_s),\\
		&	f_3'=\one_{\tilde\BB_{\e_r'z_{\pi'(r)}}}(w_r)\one_{\tilde\AA_{\e_s'z_{\pi'(s)}}}(w_s),\; 
		f_4'=\one_{\tilde\BB_{\e_r'z_{\pi'(r)}}}(w_r)\one_{\tilde\BB_{\e_s'z_{\pi'(s)}}}(w_s). \nonumber
	\end{align*}
	Then by the convergence of Riemann sums, we get
	\begin{align*}
		\lim_{n \to \infty}\vp_n( \frac{H_{n}^{\e_1}}{\sqrt{n}} \cdots \frac{H_{n}^{\e_{2k}}}{\sqrt{n}}) 
		&=\sum_{\pi\in \mathcal P_2(2k)} \int_{z_0=0}^1\int_{[-1,1]^k}\prod_{(r,s)\in \pi} \mathcal E_{r,s}^{(H)}({\underline z_{2k}})dz_0 dz_1 \cdots dz_k,
	\end{align*}
	where $\underline z_{2k}=(z_0,z_1,\ldots,z_{2k})$ and 
	\begin{align}\label{eqn:mathE'(z2k)_H}
		\mathcal E_{r,s}^{(H)}(\underline z_{2k})  
		=&\Big[ \big[\e'_{r} 2i\rho_1 \big]^{\d_{\e'_{r}, \e'_{s} }} f'_1+ \big[\e'_{r} 2i\rho_5 \big]^{\d_{\e'_{r}, \e'_{s} }}f'_4 +  \big[\rho_2+\rho_6 + \e'_{r} \mathrm{i}(\rho_4-\rho_3) \big]^{(1-\d_{\e'_{r}, \e'_{s} })} \nonumber\\
		& \quad \times  \big[\rho_2-\rho_6 + \e'_{r} \mathrm{i}(\rho_4+\rho_3) \big]^{\d_{\e'_{r}, \e'_{s} }} f'_2 +  \big[\rho_2+\rho_6 + \e'_{s} \mathrm{i}(\rho_4-\rho_3) \big]^{(1-\d_{\e'_{r}, \e'_{s} })}  \nonumber\\
		& \quad \times \big[\rho_2-\rho_6 + \e'_{s} \mathrm{i}(\rho_4+\rho_3) \big]^{\d_{\e'_{r}, \e'_{s} }} f'_3 \Big] (1-\d_{\nu_{r}, \nu_{s}}).
	\end{align}
\end{remark}

%For $m$ independent matrices also, using arguments similar to those used above, we can show (\ref{eqn:lim_mome_A3_Hgen_com}).
 %that if $p$ is odd, then the limit will be zero and for even $p$, say $2k$, we have 
%	$$ \lim_{n \to \infty}	\vp_n(\dfrac{H^{(\tau_1) \e_1}_{n}}{n^{1/2}}\cdots \dfrac{H^{(\tau_{2k})\e_{2k}}_{n}}{n^{1/2}}) = \sum_{\pi\in \mathcal P_2(2k)}\int_{0}^1\int_{[-1,1]^k}\prod_{(r,s)\in \pi} \d_{\tau_r,\tau_s} \mathcal E_{r,s}^{(H)}({\underline z_{2k}}) \prod_{i=0}^k dz_i,$$
%	where $\mathcal E_{r,s}^{(H)}({\underline z_{2k}})$ is as in (\ref{eqn:mathE'(z2k)_H}). 
 %This completes the proof of Corollary \ref{cor:jc_H_g}.
 %\end{remark}
%\end{proof}

\begin{proof}[Proof of Proposition \ref{thm:jc_T+P_g}]
Here we skip the details. Using the idea of the proof of Proposition \ref{thm:generalT_com} and Remark \ref{cor:jc_H_g}, 
one can show that
	\begin{align} \label{eqn:lim_phi(Te*,P)_g}
	&	\lim_{n \to \infty} \vp_n\big((P_nT_{n,g}^{(\tau_1)\e_1} \cdots T_{n,g}^{(\tau_{k_1})\e_{k_1}})  \cdots (P_nT_{n,g}^{(\tau_{k_{p-1}+1})\e_{k_{p-1}+1}} \cdots T_{n,g}^{(\tau_{k_p})\e_{k_p}}) \big) \nonumber \\
	=&
	\l\{\begin{array}{lll}
		\displaystyle \sum_{\pi\in \mathcal P_{2}(2k)} \int_{z_0=0}^1\int_{[-1,1]^k}  \prod_{(r,s)\in \pi} \d_{\tau_r,\tau_s} \mathcal E_{r,s}^{(T,P)}({\underline z_{2k}}) \prod_{i=0}^kdz_i, &  \mbox{ if $k_p=2k$ and $p$ is even}, \\
		0&   \mbox{ otherwise},
	\end{array}\r.
\end{align}
where $\mathcal E_{r,s}^{(T,P)}({\underline z_{2k}})$ is some function of the form  (\ref{eqn:mathE'(z2k)_H}). 
\end{proof}
\subsection{Final arguments in the proof of Theorem \ref{thm:gen_tdp_com}} \label{subsec:Tdp,g}

We use the ideas from 
%of 
the proofs of Propositions \ref{thm:generalT_com} to 
%, \ref{thm:jc_D+P_g}, \ref{thm:jc_T+D_g} and 
\ref{thm:jc_T+P_g}.
%to prove Theorem \ref{thm:gen_tdp_com}. Here 
We mention only the main steps. 
%Also see the proof of Theorem \ref{thm:JC_tdp} for more details.

\begin{proof}[Proof of Theorem \ref{thm:gen_tdp_com}]
Let $(a^{(\tau)}_i)_{i\in \Z}, (b^{(\tau)}_i)_{i\in \Z}$ be the input sequences for $T^{(\tau)}_{n,g}$ and $(d^{'(\tau)}_i)_{i\in \Z}, (d^{''(\tau)}_i)_{i\in \Z}$ be the input sequences for $D^{(\tau)}_{n,g}$.	Same as earlier, it is sufficient to check the convergence for the following monomial from the collection $\{P_n, n^{-1/2}T_{n,g}^{(i)},  D_{n,g}^{(i)};  1 \leq i \leq m\}$:
\begin{align*}
	&(P_n B_{n,\mu_1}^{(\tau_1) \e_1} \cdots B_{n,\mu_{k_1}}^{(\tau_{k_1}) \e_{k_1}} ) (P_n B_{n,\mu_{k_1+1}}^{(\tau_{k_1+1}) \e_{k_1+1}} \cdots B_{n,\mu_{k_2}}^{(\tau_{k_2}) \e_{k_2}} ) \cdots (P_n B_{n,\mu_{k_{p-1}+1}}^{(\tau_{k_{p-1}+1}) \e_{k_{p-1}+1}} \cdots B_{n,\mu_{k_p}}^{(\tau_{k_p}) \e_{k_p}} ) \nonumber \\
		& = q_{k_p}(P,D_g,T_g),  \mbox{ say},
\end{align*}
	where $\e_i \in \{1, *\}$, $\tau_i \in \{1,2, \ldots, m\}$ and for $\mu_i \in \{1,2\}$, $B^{(\tau_i) \e_i}_{n,1}=n^{-1/2}T^{(\tau_i) \e_i}_{n,g}$, $B^{(\tau_i) \e_i}_{n,2}=D^{(\tau_i) \e_i}_{n,g}$.
	
	First note that if
	%from the proof of Theorem \ref{thm:jc_T+P_g} that if 
	$p$ is odd, then $ \vp_n \big(q_{k_p}(P,D_g,T_g)\big)=o(1)$.
	Let $p$ be even.
	 Then from Lemma \ref{lem:hankel_g}, we have
	\begin{align*}
	& \vp_n \big(q_{k_p}(P,D_g,T_g)\big) \nonumber \\
 &	= \frac{1}{n^{1+\frac{w_p}{2} }} \sum_{j=1}^n \sum_{I_{k_p}} \prod_{e=1}^p \prod_{t=k_{e-1}+1}^{k_e} (z^{(\tau_t)\e_t}_{i_t}\one_{\AA_{\e_t'i_t}}+z^{'(\tau_t)\e_t}_{i_t}\one_{\BB_{\e_t'i_t}})  (m_{t,k_e})    \d_{0,\sum_{c=1}^p(-1)^{p-c} \sum_{\ell=k_{c-1}+1}^{k_{c}}  \e'_\ell i_\ell},
	\end{align*}
	where $z^{(\tau_t) \e_t}_{i_t} $ is $a^{(\tau_t)\e_t}_{i_t}$ or $d^{'(\tau_t)\e_t}_{i_t}$ depending on whether the matrix $B_{n,\mu_t} ^{(\tau_t)\e_t}$ is $T_{n,g}^{(\tau_t)\e_t}$ or $D_{n,g}^{(\tau_t)\e_t}$; $z^{'(\tau_t) \e_t}_{i_t}$ is $ b^{(\tau_t)\e_t}_{i_t}$ or $d^{''(\tau_t)\e_t}_{i_t}$ based on $B_{n,\mu_t}^{(\tau_t)\e_t}$ is $T_{n,g}^{(\tau_t)\e_t}$ or $D_{n,g}^{(\tau_t)\e_t}$;
	 $I_{k_p}$ and $ m_{t,k_e}$ are as in (\ref{eqn:i_k in -n to n}) and (\ref{eqn:m_chi,t_Hn_g}), respectively; and
	\begin{align}\label{eqn:no of T in Q_g}
	w_p= \# \{ \mu_t : B_{n,\mu_t}^{(\tau_t) \e_t} = B_{n,1}^{(\tau_t) \e_t} \mbox{ in }  q_{k_p}(P,D_g,T_g)\}.
	\end{align}
	Note that if $w_p$ is odd, then  
	%from a similar argument that used to establish (\ref{eqn:phi_To(1)odd}), we get 
	$\vp_n \big(q_{k_p}(P,D_g,T_g)\big)=o(1)$. 
	
	Now  for $c=1,2, \ldots, p$, let $ u_{r_{c-1}}, u_{r_{c-1}+1}, \ldots, u_{r_{c}}$ be the positions of $D_{n,g}$ and $ v_{w_{c-1}}, v_{w_{c-1}+1}, \ldots, v_{w_{c}}$ be the positions of $T_{n,g}$ between $B_{n,\mu_{k_{c-1}+1}}^{(\tau_{k_{c-1}+1}) \e_{k_{c-1}+1}}$ and $B_{n,\mu_{k_{c}}}^{(\tau_{k_c}) \e_{k_c}}$. 
 %between $k_{c-1},\ldots, k_{c}$ in $q_{k_p}(P,D_g,T_g)$; 
% $ v_{w_{c-1}}, v_{w_{c-1}+1}, \ldots, v_{w_{c}}$ be the positions of $T_{n,g}$ between $k_{c-1},\ldots, k_{c}$ in $q_{k_p}(P,D_g,T_g)$. 
 Here $r_0=k_0=w_0=1$ and $r_p+w_p=k_p$. 
	Suppose 
	\begin{equation*} %\label{eqn:setR_Tg}
		R=([k_p] \setminus \cup_{c=1}^p \{ u_{r_{c-1}}, u_{r_{c-1}+1}, \ldots, u_{r_{c}}\}).
	\end{equation*}
Note from (\ref{eqn:no of T in Q_g}) that $\# R=w_p$. Let $w_p=2k$.
	Then using arguments similar to 
 %from the similar arguments 
 those used while establishing (\ref{eqn:lim_phi_P,D*_g}) and
	(\ref{eqn:lim_phi(Te*,P)_g}), we have 
	\begin{align*}%\label{eqn:lim_phi(PTD)_fin_g}
		&	\lim_{n \to \infty} \vp_n \big(q_{k_p}(P,D_g,T_g)\big) \nonumber \\
		&	= 
	\l\{\begin{array}{lll}
	\displaystyle \hskip-5pt \int_{z_0=0}^1	\sum_{i_{u_{r_0}}, \ldots, i_{u_{r_p}}=-\infty}^{\infty} \prod_{t=1}^{r_p} (d^{'(\tau_{u_t})\e_{u_t}}_{i_{u_t}}\one_{[0,1/2]}(z_0) +d^{''(\tau_{u_t})\e_{u_t}}_{i_{u_t}}\one_{(1/2,1]} (z_0))   \\
	   \times \d_{0,\sum_{c=1}^{p}(-1)^c \sum_{\ell=r_{c-1}+1}^{r_c} \e'_{u_\ell} i_{u_\ell}} \displaystyle \hskip-3pt \sum_{\pi\in \mathcal P_{2}(R)} \int_{[-1,1]^k}  \prod_{(r,s)\in \pi} \hskip-5pt \d_{\tau_r,\tau_s} \mathcal E_{r,s}^{(T,P)}({\underline z_{2k}}) \prod_{i=0}^kdz_i  &  \mbox{ if  $w_p=2k$ and $p$ is even}, \\
			0&   \mbox{ otherwise},
		\end{array}\r.
	\end{align*}
	where $\mathcal E_{r,s}^{(T,P)}({\underline z_{2k}})$ is as in (\ref{eqn:lim_phi(Te*,P)_g}) for pair-partitions of the set $R$.
	This completes the proof of Theorem \ref{thm:gen_tdp_com}.
\end{proof}

\section{Conclusion}
The joint convergence of independent copies of real symmetric Toeplitz $T_{n,s}$ and Hankel $H_{n,s}$ matrices was already known \cite{bose_saha_patter_JC_annals}. No results were known for the non-symmetric versions.

We have considered independent random Toeplitz matrices $T_n$ with complex input entries that have a pair-correlation structure, along with deterministic Toeplitz matrices $D_n$ and the backward identity permutation matrix $P_n$. We have first 
established the joint convergence of $\{T_n, D_n, P_n\}$ (Theorem \ref{thm:JC_tdp}). This provides a generalization (in terms of input entries and correlation) of the earlier result \cite{bose_saha_patter_JC_annals}. In particular, the following known results follow from Theorem \ref{thm:JC_tdp}: 
%(see Corollary \ref{cor:jc_dtp_hankel}); 
LSD results on $T_{n,s}$ and any symmetric matrix polynomials \cite{bose_sen_LSD_EJP}; joint convergence of $H_{n,s}$ \cite{bose_saha_patter_JC_annals}; LSD results on $H_{n,s}$ \cite{bose_sen_LSD_EJP}.

Liu and Wang \cite{liu_wang2011} appear to be the first to provide a representation of $H_{n,s}$ in terms of  $T_n$ and $P_n$ ($H_{n,s}=P_nT_n$). This relation simplifies the study of $H_{n,s}$. In \cite{liu_wang2011} and \cite{liu2012fluctuations} this relation was used successfully. 

We have introduced a generalized Toeplitz matrix $T_{n,g}$, and have exploited the relation $H_n=P_nT_{n,g}$ to extend Theorem \ref{thm:JC_tdp} to $T_{n,g}$ and related matrices (Theorem \ref{thm:gen_tdp_com}). This in particular implies the joint convergence of independent asymmetric Hankel matrices $H_n$ (Remark \ref{cor:jc_H_g}). In all cases, the limits are universal, depending only on the correlation structure.

%, a relation which appears to be new. 
%conclude a relation $H_n=P_nT_{n,g}$. This presentation of $H_n$ helped us to conclude the joint 
Finally, the relation $H_n=P_nT_{n.g}$ may play an important role in solving the open question of convergence of the ESD of the non-symmetric matrix $H_n$.
%Now, this presentation of $H_n$ will benefit the researchers in the future to study the asymmetric Hankel matrices easily. This relation also leads a direction to deal with the LSD of asymmetric Hankel matrices, which is unsolved yet. 

%\vskip3pt
%
%\noindent \textbf{Acknowledgment:} 
%The authors are grateful to the anonymous reviewer and  Associate Editor for many valuable suggestions and  important corrections.

%\bibliography{shambhubib}
%%\bibliography{bibtex}
%\bibliographystyle{abbrv} %{amsplain}

\end{document}